\documentclass[12pt]
{amsart}

\usepackage{cite}
\usepackage{amssymb}
\usepackage{amsmath}
\usepackage{amsthm}
\usepackage{amsfonts}
\usepackage{graphicx}%
\usepackage{comment}
\usepackage{bbm}
\usepackage{xcolor}
\usepackage[toc,page]{appendix}
\usepackage{mathtools}
\usepackage[utf8]{inputenc}
\usepackage{enumitem}   

\usepackage[pagebackref]{hyperref}
\usepackage[margin=1.3in, marginpar=1.3in, marginparsep=.1in]{geometry}
\usepackage{hyperref}

\usepackage{tikz-cd}

\numberwithin{equation}{section}

\theoremstyle{plain} 
\newtheorem{thm}{Theorem}[section]

\newtheorem{prop}[thm]{Proposition} 

\newtheorem{definition}[thm]{Definition}
\newtheorem{rmk}[thm]{Remark} 
 
\newtheorem*{acknowledgement}{Acknowledgement}

\newcommand{\R}{\mathbb{R}}  
\newcommand{\E}{\mathbb{E}} 
\newcommand{\Prob}{\mathbb{P}}
\newcommand{\Hei}{\mathbb{H}}
\newcommand{\C}{\mathbb{C}}

\title[Berry-Heisenberg random waves]{Berry-Heisenberg random waves}

\author[Carfagnini]{Marco Carfagnini}
\address{School of Mathematics and Statistics\\
University of Melbourne \\
Parkville VIC 3010, Australia}
\email{marco.carfagnini@unimelb.edu.au}

\author[Todino]{Anna Paola Todino}
\address{ Dipartimento di Scienze e Innovazione Tecnologica\\
Universit\`a del Piemonte Orientale\\
Italy}
\email{annapaola.todino@uniupo.it}

\keywords{Berry random waves, Heisenberg group, random eigenfunctions, sub-Laplacians}

\subjclass{60G60, 53C17, 22D10.}

\begin{document}
\begin{abstract}
We construct a new family of random fields on the Heisenberg group $\mathbb{H}$, the sub-Riemannian analog of $\R^{n}$.  These fields are generalized random eigenfunctions of the sub-Laplacian on $\Hei$, and can be viewed as the sub-Riemannian counterpart to the Berry random wave model in $\mathbb{R}^{n}$. The construction of such waves relies on the representation theory of $\Hei$, and differs from the Euclidean case because of the presence of infinite-dimensional unitary irreducible representations. This work represents a first step towards studying random waves and their geometry in sub-Riemannian spaces.
\end{abstract}

\maketitle

\tableofcontents

\section{Introduction and main results}
Random wave models provide a fundamental link between spectral geometry, quantum chaos, and the theory of Gaussian random fields. Following the seminal ansatz introduced by Berry \cite{Berry2002,Berry1977}, chaotic quantum eigenstates in high-energy regimes are widely conjectured to exhibit universal statistical behaviors, locally modeled by Gaussian monochromatic plane waves. This perspective has stimulated an extensive literature on the geometry and topology of random eigenfunctions, including nodal sets, excursion sets, and related geometric functionals; see e.g. \cite{RevModPhys.89.045005,AdlerTaylor07}.
Most existing works concern Euclidean spaces and compact Riemannian manifolds, see for instance the surveys \cite{canzani_MRWsurvey, igorsurvey}, and, more recently, negatively curved manifolds and hyperbolic spaces \cite{GrottoPeccati2023}. In all these settings, random waves arise naturally from the spectral theory of the Laplace operator and can be analyzed through explicit harmonic-analytic representations. On the other hand, the study of such random waves on sub-Riemannian structures remains largely unexplored.
The Heisenberg group $\Hei$ plays a central role in geometric analysis, harmonic analysis, and mathematical physics, and it represents the flat model space in sub-Riemannian geometry. 
The aim of this paper is to construct a new class of Gaussian fields on $\mathbb{H}$, which are referred to as \textit{Berry-Heisenberg random waves}. Our construction relies on harmonic analysis for the sub-Laplacian $\mathcal{L}$ and representation theory for $\Hei$. These fields play  a role analogous to that of Berry's monochromatic waves in Euclidean geometry. We prove explicit spectral representations, establish their basic probabilistic and analytic properties, and compare them with classical random wave models on $\R^{n}$.

\subsection{Berry random wave model and Riemannian random waves}\label{sec:berry}
Broadly speaking, a Riemannian Random Wave model (RW) is a random, Gaussian superposition of Laplacian eigenfunctions on a manifold. The foundational benchmark for this framework is the complex-valued planar random wave, introduced by Berry in his seminal paper \cite{Berry1977} to model isotropic, high-energy monochromatic quantum states. 
Formally, for a fixed wavenumber parameter $\alpha > 0$, Berry's complex model on $\R^{n}$ is defined as a centered, stationary, and isotropic Gaussian random field $\{ b^{\C}_{\alpha} (x) , \, x\in \R^{n}\}$ whose trajectories are almost surely  (generalized) Gaussian  eigenfunctions for the Laplace operator $\Delta_{\R^{n}}$, that is, it satisfies the Helmholtz equation
\begin{align}\label{eqn.intro}
    &\Delta_{\R^{n}} \,  b^{\C}_{\alpha} (x) = - \alpha \, b^{\C}_{\alpha} (x) , \quad a.s.
\end{align}
Note that the spectrum of $-\Delta_{\R^{n}}$ in $L^{2}(\R^{n})$ is $[0,\infty)$ and it is purely continuous, containing no eigenvalues. For this reason, the function $b^{\C}_{\alpha} (x)$ is referred to as a \emph{generalized} eigenfunction, as it satisfies \eqref{eqn.intro} but is not in $L^{2}(\R^{n})$.
This field arises as the limit in distribution of a finite random superposition of independent Euclidean plane waves, all with the same wavenumber $\sqrt{\alpha}$ but different directions. For instance, in dimension $n=2$, $\{ b^{\C}_{\alpha} (x) , \, x\in \R^{n}\}$ is the limit (in distribution) of
\begin{equation}\label{eq:supBerry}
    \frac{\sqrt{2}}{\sqrt N}\sum_{i=1}^N \cos (\sqrt{\alpha} \, x\cdot \theta_i+\phi_i) + i \frac{\sqrt{2}}{\sqrt N}\sum_{i=1}^N \sin (\sqrt{\alpha} \, x\cdot \theta_i+\phi_i), \quad N\to\infty,
\end{equation}
where $\{\theta_i\}_{i=1}^N$ and $\{\phi_i\}_{i=1}^N$ are i.i.d. uniform random variables in $[0,2\pi]$.

The field $\{ b^{\C}_{\alpha} (x) , \, x\in \R^{n}\}$ is unique in the following sense.  If $Y$ is an isotropic centered complex-valued Gaussian random field on the plane with variance 2 and such that $  \Delta Y + \alpha \, Y =0$, then $Y$ has the same distribution as $b^{\C}_{\alpha}$, see \cite[Theorem 5.7.2]{AdlerTaylor07}.
Being a mean-zero Gaussian field, the law of $b^{\C}_{\alpha}$ is uniquely characterized by its covariance function, which is given by 
\begin{align}\label{eq:covBerry}
   \E\left[ b^{\C}_{\alpha} (x) \overline{b^{\C}_{\alpha}(y)} \right] = 2j_{n} (\sqrt{\alpha} \|x-y\|)  := \frac{\nu !\, 2^{\nu+1}}{|\sqrt{\alpha} \|x-y\||^{\nu}}J_{\nu} (\vert \sqrt{\alpha}\, \|x-y\| \vert ) , \quad \nu = \frac{n}{2}-1,
\end{align}
where $J_{\nu}$ denotes the Bessel function of the first kind of order $\nu$. For two-dimensional waves, this reduces to $2J_0(\sqrt{\alpha}\|x-y\|)$.
Let us consider both an analytic and algebraic heuristic on the uniqueness of the field. Analytically, the covariance of $b^{\C}_{\alpha}$ is the unique smooth radial solution to the equation
\begin{align}\label{eqn.intro.idk}
    &\Delta_{\R^{n}} f = - \alpha \, f, & f(0)=2.
\end{align}
Algebraically, the uniqueness of the field is related to the unitary irreducible representations of the additive group $\R^{n}$ being all one-dimensionals, see also Remark \ref{rmk.uniqueness} for more details.
In the context of quantum chaos theory, concerning the relation between classical chaotic systems and their quantum mechanical counterpart, Berry \cite{Berry1977} suggested that the behavior of high energy ($\alpha \to \infty$) \emph{deterministic} eigenfunctions $f_\alpha$ should be universal, at least on generic chaotic surfaces, meaning that one can compare such an eigenfunction $f_{\alpha}$ of large eigenvalue $\alpha$ with the random
monochromatic plane wave of wavenumber $\alpha$.
This fundamental insight sparked a broad line of research devoted to the study of the geometric and topological functionals of the Berry model; 
we refer for instance to \cite{NourdinPeccatiRossi19,DalmaoEstradeLeon,GMT,Notarnicola}.

Berry-like waves have been extended to general Riemannian manifolds $(\mathcal{M},g)$. When $\mathcal{M}$ is compact, the spectrum of the Laplace-Beltrami operator $\Delta_g$ is discrete, $0 = \lambda_0 < \lambda_1 \leq \lambda_2 \dots \uparrow \infty$. To replicate Berry's model in this setting, 
Zelditch \cite{Zelditch2009} introduced the notion of \emph{monochromatic random waves}, defined as random superpositions of Laplacian eigenfunctions.
It has been recently established that the local high-frequency behavior of these monochromatic random waves matches that of the Euclidean Berry model at least in a local sense via a pullback procedure \cite{CanzaniHanin2020,DierickxNourdinPeccatiRossi2023,Keeler,STinprogress}. 
In the context of monochromatic RWs and compact setting, it is important to mention two special cases. The first is the sphere $\mathcal{M}=\mathbb{S}^n$, 
where random hyperspherical harmonics find extensive applications in cosmology, particularly in modeling the Cosmic Microwave Background (CMB) radiation \cite{MarinucciPeccati-Sphere}. The second is the flat torus $\mathcal{M}=\mathbb{T}^n$, where the corresponding fields are known as arithmetic random waves (ARW).
The literature is vast; we refer for instance to  \cite{WigmanAnnMath,RudWig08,MPRW,cammarota2019nodal} for the study of geometric functionals of ARW and to \cite{W,Marinucci2020, marinucci2023laguerre,Marinucci2015,CammarotaM2018} for that of spherical harmonics, finally to \cite{Marinucci2023,igorsurvey} for some surveys on this subject. 
Random Gaussian generalized eigenfunctions for the Laplace-Beltrami operator have been considered on non-compact Riemannian manifolds as well. Recently,  \cite{GrottoPeccati2023} constructed such waves on the $n$-dimensional hyperbolic space. Their construction relies on the Helgason-Fourier transform on the hyperbolic space, which is a symmetric space. The group Fourier transform is available on the Heisenberg group, but it 
leads to an operator-valued map because of the presence of infinite-dimensional unitary irreducible representations. To bypass this difficulty, we rely on harmonic analysis on $\Hei$, where deterministic eigenfunctions are known \cite{Thangavelu98}, leaving the  analysis of the group Fourier transform approach to future works.

\subsection{Main results}\label{sec:main}
The aim of this paper is to construct and characterize a complex-valued smooth centered left invariant Gaussian random field consisting of generalized eigenfunctions for the sub-Laplacian $\mathcal{L}$, the Berry-Heisenberg random wave. In analogy to the Euclidean case, where the covariance of the Berry random wave model satisfies \eqref{eqn.intro.idk} and is invariant by rotations, the covariance function $C$ of the Berry-Heisenberg field should satisfy 
\begin{align}\label{eqn.intro.idk2}
    &\mathcal{L} C = - \alpha \, C, & C(e)=2,
\end{align}
where $e\in \Hei$ denotes the identity element, and $C$ should be a function invariant under the action of the subgroup of isometries fixing the identity. Therefore, the covariance function admits the following form
\begin{align}
    &C(g)=2 F_{\alpha}^{(c)} (g) :=2c_{-1}  J_{0} ( \sqrt{\alpha} \,|z| ) + 2\sum_{k=0}^{\infty} c_{k} \, \varphi_{\alpha}^{k} (g),\label{eq:intro.e.cov.H.c}
\end{align}
where $\varphi^{k}_{\alpha}$ are radial deterministic eigenfunctions for $\mathcal{L}$ which will be defined in \eqref{e.sum.eps} and $(c_{k})_{k=-1}^{\infty}$ are  some non-negative coefficients with $\sum_{k=-1}^{\infty} c_{k} =1$ specified in \eqref{def:O}. Our main results include Theorem \ref{thm.smooth.field} and Theorem \ref{thm.stat.field.H}. In the former we prove that if a Gaussian field has covariance function given by $F^{(c)}_{\alpha}$, then its trajectories are almost surely smooth, and in the latter we show the existence of a left-invariant complex-valued centered Gaussian field  $\{  \xi_{\alpha,c} (g), \, g\in \Hei \}$ with covariance function given by $C(g)=2 F_{\alpha}^{(c)} (g)$. Moreover, we prove that the field is unique in the sense that if  $\{  \zeta  (g), \, g\in \Hei \}$ is a  left-invariant complex-valued centered Gaussian field, whose covariance is invariant under isometries fixing $e\in \Hei$ and such that $\mathcal{L} \zeta = -\alpha \, \zeta$, then $\zeta =  \xi_{\alpha,c}$ in distribution. As a byproduct of Theorem \ref{thm.stat.field.H}, we also have the following representation in terms of deterministic eigenfunctions 
\begin{equation}\label{representation.r.f.H.intro}
    \xi_{\alpha,c} (g) =  \sqrt{c_{-1}}\, b^{\C}_{\alpha} (z)+\sum_{k=0}^{\infty} \sqrt{c_{k}} \, \xi^{k}_{\alpha} (g) , \quad g=(z,t) \in \Hei,
\end{equation}
where $\{ b^{\C}_{\alpha} (z) , z \in \R^{2} \}$ is a complex-valued Berry random field on $\R^{2}$ and $\xi^{k}_{\alpha} (g)$ is the $k-$component of $\xi_{\alpha,c}$ defined in \eqref{e.field.component}. We refer to the field  $\{  \xi_{\alpha,c} (g), \, g\in \Hei \}$ as the  $(\alpha, c)$-Berry-Heisenberg random wave.

In Proposition \ref{prop.berry.horizontal}  and  Proposition \ref{prop.approx.CLT} we establish some connections  to the Euclidean Berry random wave model. More precisely, in Proposition \ref{prop.berry.horizontal} we show that, for a  fixed $\alpha>0$ and $k\in \mathbb{N}$, if $\xi^{k}_{\alpha} (g)$ denotes the $k$-component of $\xi_{\alpha,c}$, that is, the field with covariance $2\varphi_{\alpha}^{k}(g)$, then, as $k\rightarrow \infty$, the covariance function of $\xi_{\alpha}^{k}$ converges to the covariance function of the field $\Xi(g) := b^{\C}_{\alpha} (z)$ for $g=(z,t) \in \Hei$, which can be viewed as a  complex-valued Berry random field on the horizontal sub-bundle of $\Hei$. 
We leave the investigation of whether a similar property holds on more general sub-Riemannian manifolds as a direction for future research. In Proposition \ref{prop.approx.CLT} we show that the field $\{\xi_{\alpha, c}(g), \, g\in \Hei\}$  can be represented as the limit of superpositions of independent waves, where now waves are to be interpreted as eigenfunctions of the sub-Laplacian $\mathcal{L}$, in analogy with \eqref{eq:supBerry} for Berry random waves. 

Gaussian random functions on the complex plane with covariance structure similar to \eqref{eq:intro.e.cov.H.c} have recently been studied in \cite{HaimiKolianderRomero22,AbreuPereiraRomero17}.

The construction of Berry-Heisenberg random fields is based on a result of Yaglom \cite{Yaglom60}, where he proved a representation formula for positive-definite functions on a locally compact topological group $G$ of type I in terms of unitary irreducible representations of $G$ and positive operator-valued measures. This was then used to obtain a decomposition of left-invariant, i.e. left-stationary, random fields on  $G$. For completeness, we present a proof of Yaglom's original result in Appendix \ref{appendix.A}. Our proof differs from the original one as it relies on direct integrals of unitary representations in terms of irreducible ones, see Theorem \ref{thm.Bochner}. Note that, in the case of compact Lie groups, Yaglom's decomposition of random fields on groups can be viewed as a random analog of the classical Peter-Weyl theorem in representation theory, see also \cite{BaldiRossi2014}. Unitary irreducible representations of  the Heisenberg group are known  thanks to the Stone-von Neumann Theorem and they correspond to either characters (one-dimensional) or Schr\"odinger representations (infinite-dimensional). Our approach combines such representations with Yaglom's decomposition to construct left-invariant random fields whose trajectories are eigenfunctions for the sub-Laplacian. Moreover, from the proof of Theorem  \ref{thm.stat.field.H} it is clear how such representations affect the covariance structure of the field in (\ref{eq:intro.e.cov.H.c}). Indeed, Schr\"odinger representations  give rise to the family of radial functions $ \varphi^{\alpha}_{k}$, while characters contribute with  $J_{0} (\sqrt{\alpha} |z|)$. We remark that representation theory for higher dimensional Heisenberg groups and more general nilpotent Lie groups is known, see \cite{Kirillov1960, Kirillov1963, Kirillov2004, AsaadGordina2016}, and we will address construction of Gaussian random eigenfunctions in this setting in the future. 

\subsection{Comparison with other fields}\label{sec:comparison}
The classical Berry random field $\{ b^{\C}_{\alpha} (x) , \, x\in \R^{n}\}$ is the unique (in distribution) isotropic centered Gaussian random field with variance two such that its trajectories are almost surely generalized eigenfunctions for the Laplace operator with eigenvalue $\alpha>0$. Geometrically, this means that the distribution of the field is invariant under rotations and hence its covariance is the unique smooth radial solution to \eqref{eqn.intro.idk}. On the Heisenberg group, radial functions are the ones invariant under the action of the subgroup of isometries fixing the identity $e\in \Hei$, which play the same role as rotations in $\R^{n}$. Contrary to the Euclidean case, for a fixed $\alpha>0$ there are countably many radial solutions to the equation 
    \[
    \mathcal{L}f =- \alpha \,f , \quad f(e)=2,
    \]
which we denote by $g\rightarrow 2\varphi_{\alpha}^{k}(g)$, $k\in \mathbb{N}$, and $g\rightarrow 2 J_{0} (\sqrt{\alpha} |z|)$, for $g=(z,t)$, where $J_{0}$ is the Bessel function of first kind and order $0$, and $\varphi_{\alpha}^{k}$ are some special functions which are defined using Schr\"odinger representations of $\Hei$ and can be explicitly computed in terms of Laguerre polynomials, see Proposition \ref{prop.properties.e.functions}. Note that $2J_{0} (\sqrt{\alpha} |z|)$ is the covariance function of  a Berry random wave on $\R^{2}$.

    Note that the expansion \eqref{representation.r.f.H.intro} exhibits a structural analogy with the classical representation of isotropic spherical random fields, see e.g. \cite{MarinucciPeccati-Sphere}. If
    \begin{align*}
        T : \mathbb{S}^{2} \times \Omega \longrightarrow \C
    \end{align*}
     is a centered, isotropic Gaussian field on the unit sphere $\mathbb{S}^{2}$, then \begin{align*}T(x) = \sum_{\ell=0}^{\infty} \sum_{m=-\ell}^{\ell} a_{\ell m} Y_{\ell m}(x), \quad x \in \mathbb{S}^2,\end{align*}where $\{Y_{\ell m}, \, m=\ell, \dots, \ell , \, \ell\in \mathbb{N}\}$ are the standard spherical harmonics, which form an orthonormal basis of $L^2(\mathbb{S}^2)$ consisting of Laplacian eigenfunctions. If one views $\mathbb{S}^2$ as a homogeneous space of $SO(3)$, then the above decomposition is a direct  consequence of the Peter-Weyl Theorem for compact topological groups. Moreover, on $\mathbb{S}^2$, the random coefficients can be defined by the relation
    \begin{align*}
        & a_{\ell m} := \int_{\mathbb{S}^{2}} T(x) \overline{Y_{\ell m}} (x) dx,
    \end{align*}
    which is well-defined since $Y_{\ell m}$ is in $L^{2} (\mathbb{S}^{2})$. Note that generalized eigenfunctions for the sub-Laplacian on $\Hei$ are not in $L^2(\mathbb{H})$ since $\Hei$ is non-compact, and the Peter-Weyl Theorem is not available in this setting, but a representation theorem for left-invariant fields on locally compact groups of type I is still available, see Theorem \ref{thm.stationarygroup}.   On the other hand, a closer parallel can be drawn with the standard expansion of the complex-valued planar Berry random wave $b_{\alpha}(z)$, $z\in \R^{2}$. In polar coordinates $z=(r,\theta)$, this field can be represented as, see \cite[Theorem 5.7.3]{AdlerTaylor07} and \cite[Eq. 1.5]{NourdinPeccatiRossi19} \begin{equation}\label{e.rep.berry.complex}b_{\alpha}(z) =b_\alpha(r,\theta)= \sum_{m=-\infty}^{\infty} a_{m} J_{|m|} (\sqrt{\alpha}r) e^{im\theta},\end{equation}where $\{a_{m}\}$ is a family of i.i.d. complex Gaussian random variables. The representation \eqref{representation.r.f.H.intro} can be interpreted as an analog to \eqref{e.rep.berry.complex} to the Heisenberg group.\\

The paper is structured as follows. In Section \ref{sec:background}, we collect the necessary background material to understand the framework and the tools required for the main theorems. More precisely, Bochner Theorem for the representation of stationary fields on $\mathbb{R}^n$, and more properties on Berry's model. We also collect general facts about representation theory and  random fields on locally compact groups of type I. We conclude Section \ref{sec:background} with the Heisenberg group and harmonic analysis on it. Statements and proofs of the main results can be found in Section \ref{sec:berry-heisenbergRW}. Finally, Appendix \ref{appendix.A} collects the proofs of classical theorems concerning representation of positive-definite functions and random fields on locally compact groups of type I. The results presented in this section are classical, but we included proofs for completeness as they differ from the existing ones in the literature, being based on direct integrals of Hilbert spaces and representations.

\section{Preliminaries}\label{sec:background}

In this section we recall the necessary background: stationary fields on $\R^{n}$ and on a general locally compact group of type I, and harmonic analysis on the Heisenberg group. Our main references are \cite{AdlerTaylor07, Thangavelu98, Yaglom60}.

\subsection{Stationary fields on $\R^{n}$}\label{sec:stationary_fields}

A complex-valued random field $f=\{f(t) , \, t\in T\}$ is a collection of complex-valued random variables $f(t)$ defined on a common probability space $\left( \Omega, \mathcal{F}, \Prob \right)$, where $T\subseteq \R^{n}$ is a subset. The mean and covariance functions are defined as
\begin{align*}
    & m(t) := \E \left[ f(t) \right], & C(t,s) := \E \left[ ( f(t) - m(t))( \overline{f(s)} -  \overline{m(s)})\right],
\end{align*}
and the field is called \textit{mean-square continuous} if 
\begin{align*}
    \lim_{t\rightarrow t_{0}} \E \left[ \vert f(t) -f(t_{0})\vert^{2} \right]=0,
\end{align*}
for every $t_{0} \in \R^{n}$. A field  $f$ is said to be \textit{strictly stationary} (or  \textit{strictly homogeneous}) if for any $k\geq 1$ and any set of points $t_{1}, \ldots , t_{k} \in \R^{n}$ one has that, for every $\tau \in \R^{n}$
\begin{equation}\label{e.real.stationary}
    \left( f(t_{1}) , \ldots , f(t_{k}) \right) \stackrel{(d)}{=}  \left( f(t_{1} + \tau) , \ldots , f(t_{k}+\tau) \right),
\end{equation}
where by $\stackrel{(d)}{=}$ we denoted the equality in law. Note that for a strictly stationary field the mean function is constant and the covariance function only depends on the difference $t-s$. If the field $f$ has finite variance, its mean function is constant, and its covariance function only depends on $t-s$, then $f$ is said to be  \textit{weakly stationary} or  \textit{second-order stationary}, or simply  \textit{stationary}. 

Let us now recall the classical spectral representation theorem for stationary random fields on $\R^{n}$. We assume here that $f$ is stationary, $m(t) \equiv 0$, and with an abuse of notation we write
\begin{align*}
    C(t-s) = C(t,s)=  \E \left[  f(t)  \overline{f(s)}\right].
\end{align*}
By Bochner theorem \cite{Bochner33} and \cite[Theorem 5.4.1]{AdlerTaylor07}, a continuous function $C:\R^{n} \rightarrow \mathbb{C}$ is non-negative definite, i.e. a covariance function, if and only if there exists a finite measure $\nu$ on $\R^{n}$ such that 
\begin{align}\label{eqn.cov.Rn}
        C(t) = \int_{\R^{n}} e^{i  t \cdot \lambda} d \nu \left( \lambda\right),
\end{align}
for all $t\in \R^{n}$. Let us recall that a complex $\nu$-noise is a $\mathbb{C}$-valued, set-indexed process $W$ satisfying 
\begin{align*}
    & \E [ W(A)] =0 , \quad \E[W(A) \overline{W(A) }] = \nu (A),
    \\
    & A\cap B = \emptyset \implies W(A\cup B)= W(A) + W(B) \, a.s.
    \\
    & A\cap B = \emptyset \implies  \E[W(A) \overline{W(B) }] =0. 
\end{align*}

\begin{thm}\label{thm.stationary.Rn} \cite[Theorem 5.4.2]{AdlerTaylor07}
    Let $f=\{ f(t), \, t\in \R^{n}\}$ be a mean-square continuous, centered (Gaussian) stationary random fields on $\R^{n}$ with covariance function \eqref{eqn.cov.Rn}. Then there exists a complex (Gaussian) $\nu$-noise $W$ on $\R^{n}$ such that 
    \begin{align}\label{e.spectral.rep.real}
        f(t) = \int_{\R^{n}} e^{i  t\cdot \lambda}  dW(\lambda),
    \end{align}
    where the equality holds in mean-square for each $t\in \R^{n}$.

    Conversely, the random field $f$ defined by \eqref{e.spectral.rep.real} has covariance function given by 
    \begin{align*}
        C(t,s) = \int_{\R^{n}} e^{i ( t-s) \cdot \lambda } d \nu (\lambda),
    \end{align*}
    and so is stationary.
\end{thm}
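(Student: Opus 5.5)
The plan is the classical Karhunen/Cramér isometry argument. Let $H_f$ be the closed linear span of $\{f(t), t\in\R^n\}$ in $L^2(\Omega,\mathcal F,\Prob)$, and let $H_\nu$ be the closed linear span of the exponentials $e_t(\lambda):=e^{it\cdot\lambda}$ in $L^2(\R^n,\nu)$, where $\nu$ is the finite measure from Bochner's theorem \eqref{eqn.cov.Rn}. Mean-square continuity gives continuity of $C$, so Bochner applies. The first step is to define a map on finite linear combinations,
\begin{align*}
 \Phi\Big(\sum_j a_j e_{t_j}\Big) := \sum_j a_j f(t_j),
\end{align*}
and to check that it is an isometry. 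This follows directly from \eqref{eqn.cov.Rn}:
\begin{align*}
\E\Big|\sum_j a_j f(t_j)\Big|^2=\sum_{j,k}a_j\overline{a_k}\,C(t_j-t_k)=\int_{\R^n}\Big|\sum_j a_j e^{it_j\cdot\lambda}\Big|^2 d\nu(\lambda).
\end{align*}
In particular $\Phi$ is well defined, since combinations that agree $\nu$-a.e.\ are sent to the same element. It then extends by continuity to a unitary map $\Phi: H_\nu\to H_f$.

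The key step is to show that $H_\nu=L^2(\R^n,\nu)$, i.e.\ that trigonometric polynomials are dense when $\nu$ is finite. I expect this to be the main technical point. The approach: finite linear combinations of $e_t$ form a self-adjoint algebra that separates points. By Stone–Weierstrass they approximate continuous functions uniformly on compacts. To pass to all of $\R^n$, I would either use a cutoff together with finiteness of $\nu$, or argue by duality: if $h\in L^2(\nu)$ is orthogonal to every $e_t$, then the finite complex measure $h\,d\nu$ has vanishing Fourier transform, so $h=0$ $\nu$-a.e. by Fourier uniqueness for finite measures. I prefer the duality argument because it is cleaner.

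Next, define $W(A):=\Phi(\mathbbm{1}_A)$ for Borel sets $A$. The isometry gives the noise properties at once: $\E[W(A)\overline{W(B)}]=\nu(A\cap B)$, $W$ is additive a.s.\ by linearity, and $W$ has mean zero because every element of $H_f$ is an $L^2$-limit of centered variables. In the Gaussian case, $H_f$ is a Gaussian Hilbert space, since $L^2$-limits of jointly Gaussian variables remain jointly Gaussian; hence $W$ is a complex Gaussian $\nu$-noise. The stochastic integral $\int h\,dW$ is first defined for simple $h$ and then extended isometrically; this integral coincides with $\Phi(h)$, because the two agree on simple functions and are both isometries. Therefore $\int e^{it\cdot\lambda}dW(\lambda)=\Phi(e_t)=f(t)$ in mean square, which gives \eqref{e.spectral.rep.real}.

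For the converse, the isometry of the stochastic integral gives
\begin{align*}
\E\Big[\int e_t\,dW\,\overline{\int e_s\,dW}\Big]=\int e_t\overline{e_s}\,d\nu=\int e^{i(t-s)\cdot\lambda}d\nu(\lambda),
\end{align*}
which depends only on $t-s$. The mean is zero, so the field is stationary.
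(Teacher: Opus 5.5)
Your proposal is correct. It is the standard Cram\'er--Karhunen isometry argument: the unitary map from $L^2(\R^n,\nu)$ onto the closed span of $\{f(t)\}$ sends $e^{it\cdot\lambda}$ to $f(t)$, density of the exponentials comes from Fourier uniqueness for finite measures, and $W(A)$ is defined as the image of the indicator of $A$. The details you flag all hold up: well-definedness, mean zero and Gaussianity passing to $L^2$-limits, and the identification of the stochastic integral with the isometry.

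The paper gives no proof of this statement; it only quotes it from Adler--Taylor, so there is no route in the paper to compare yours against.
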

One refers to $\nu$ and $W$ as the spectral measure and the spectral process corresponding to $f$, respectively.   We recall that a stationary field is said to be \textit{isotropic} if the covariance function $C$ satisfies 
\begin{align*}
    C(t) = C(|t|).
\end{align*}
For centered, mean-square continuous, isotropic random fields on $\R^{n}$, the spectral representation \ref{e.spectral.rep.real} takes the following form, see \cite[Theorem 5.7.3]{AdlerTaylor07} 
\begin{align}\label{e.isotropic.rep.real}
    f(t)= f(r,\theta) = \sum_{m=0}^{\infty} \sum_{\ell=1}^{d_{m}} f_{ m\ell } (r) h^{(n-1)}_{m \ell } (\theta),
\end{align}
where $\{f_{m \ell }, \, m, \ell\}$ is a family of mutually uncorrelated, stationary, one-dimensional processes, $\{h^{(n-1)}_{m \ell }\}$ are the spherical harmonics on $\mathbb{S}^{n-1}$ and $d_m$ is the dimension of the space of spherical harmonics of degree $m$ in dimension $n$.

\subsubsection{Berry random wave model}\label{sec:berry_detail}
Let us consider the Berry random wave model as described in the introduction. The associated real-valued model on $\mathbb{R}^n$ is defined as the isotropic Gaussian random field with covariance function
\begin{align}\label{e.berry.cov}
    \E \left[ b_{\alpha} (x) b_{\alpha}(y) \right]= j_{n} \left( \sqrt{\alpha} \vert x- y \vert \right), \; \; x,y,\in \R^{n},
\end{align}
where $\alpha>0$ is fixed and 
\begin{align*}
    j_{n} (x) = j_{n} (|x|) = \frac{1}{\omega_{n}} \int_{\mathbb{S}^{n-1}} e^{i x\cdot \theta} d\sigma_{n-1} (\theta) = \frac{\nu ! 2^{\nu}}{|x|^{\nu}}J_{\nu} (\vert x \vert ) , \quad \nu = \frac{n}{2}-1,
\end{align*}
with $\omega_n$ being the measure of the $n$-dimensional unitary ball and $J_{\nu}$ denotes the Bessel function of first kind and order $\nu$.  
The covariance function in (\ref{e.berry.cov}) can be interpreted in terms of the Fourier transform on $\R^{n}$. More precisely, the Fourier inversion formula can be written as \cite[Section 2.3]{Thangavelu98}
\begin{align*}
    f(x) =\frac{1}{(2\pi)^{\frac{n}{2}}} \int_{0}^{\infty} \left(Q_{\alpha}f \right) (x) \alpha^{n-1} d\alpha = \frac{1}{2(2\pi)^{\frac{n}{2}}}\int_{0}^{\infty} \left(Q_{\sqrt{\alpha}}f \right) (x) \alpha^{\frac{n}{2}-1} d\alpha,
\end{align*}
where the operator $Q_{\sqrt{\alpha}}$ is given as a convolution 
\begin{align}\label{e.convolution.R}
    \left( Q_{\sqrt{\alpha}} f\right) (x) = \left( f \ast \varphi_{\sqrt{\alpha}}  \right) (x) = \int_{\R^{n}} f(y) \varphi_{\sqrt{\alpha}} (x-y) dy,
\end{align}
and 
\begin{align*}
    \varphi_{\sqrt{\alpha}} (x) = \frac{1}{\left( \sqrt{\alpha} |x| \right)^{\nu}} J_{\nu} \left( \sqrt{\alpha} \, |x| \right) = \frac{1}{\nu ! 2^{\nu}} j_{n} \left( \sqrt{\alpha} \, |x| \right), \quad \nu = \frac{n}{2}-1.
\end{align*}
Note that  $Q_{\sqrt{\alpha}}\ast f$ is an eigenfunction for the Euclidean Laplacian with eigenvalue $\alpha$. We can then rewrite \eqref{e.berry.cov} as
\begin{align}\label{e.Berry.R2}
      \E \left[ b_{\alpha} (x) b_{\alpha}(y) \right]=\nu ! \, 2^{\nu}\varphi_{\sqrt{\alpha}} (x-y),  \; \; x,y \in \R^{n}.
\end{align}

The representation given in \eqref{eq:supBerry} becomes, in the real-valued case,
\begin{align}\label{eqn.berry.approx.waves}
    \frac{1}{\sqrt{N}} \sum_{i=1}^{N} \cos \left( \sqrt{\alpha} \, x \cdot y_{i} + \phi_{i} \right) , \quad x \in \R^{n}, \quad N \rightarrow \infty ,
\end{align}
where $y_{i}$'s and $\phi_{i}$'s are i.i.d. uniform random variables on $\mathbb{S}^{n-1}$ and $[0,\pi]$, respectively. An analogue to \eqref{eqn.berry.approx.waves} for Berry-Heisenberg random waves will be proven in Proposition \ref{prop.approx.CLT}. Moreover,  if $n=2$  then complex Berry random field  can be represented as 
\begin{align}
    & b^{\C}_{\alpha} (x) = b^{\C}_{\alpha} (r, \theta) = \sum_{m=-\infty}^{\infty} a_{m} J_{|m|} (\sqrt{\alpha} \, r) e^{im\theta}, \quad x= (r,\theta), \label{eqn.berry.complex}
    \\
    & \E \left[ b^{\C}_{\alpha} (x) \overline{b^{\C}_{\alpha} (y)} \right] = 2J_{0} \left( \sqrt{\alpha} \vert x-y \vert \right), \label{eq.complex.cov.berry}
\end{align}
where $a_{m}$ are i.i.d. complex Gaussian random variables such that $\E[a_{m}]=0$, $\E[|a_{m} |^{2} ]=2$.  
A similar decomposition for Berry-Heisenberg random waves will be proven in Theorem \ref{thm.stat.field.H}.

\subsection{Stationary fields on locally compact groups}\label{sec:stationary on group} Our goal is to state an analog of Theorem \ref{thm.stationary.Rn} for stationary random fields on locally compact groups of type I, following \cite{Yaglom60, Folland16}.  Let us start by recalling some general facts about representation theory of locally compact Lie groups. Let $G$ be a topological group, that is, a topological space endowed with a group structure such that the following maps are continuous 
\begin{align*}
    & L_{g} : G \longrightarrow G, & R_{g} : G \longrightarrow G,
    \\
    & L_{g}(h)= g^{-1}h, & R_{g}(h) = hg,
\end{align*}
for every fixed  $g\in G$, and set $ I_{g}(h) :=g^{-1}hg=L_{g} R_{g} (h)$. The maps $L_{g}, R_{g}$, and $I_{g}$ are the left and right translation by $g$, and the inner automorphism respectively. A unitary representation of a locally compact group $G$ is a group homomorphism 
\begin{align*}
    \pi: G \longrightarrow U(H_{\pi}),
\end{align*}
of the group $G$ into the group of unitary operators $ U(H_{\pi})$ on some Hilbert space $H_{\pi}$ which is continuous with respect to the strong operator topology, that is, 
\begin{align*}
    & \pi (gh) = \pi(g) \pi(h) , & \pi(g^{-1}) = \pi(g)^{-1} = \pi(g)^{\ast},
\end{align*}
where $\pi(g)^{\ast}$ denotes the adjoint of the operator $\pi(g)$,  and the map 
\[
g\longrightarrow \pi (g)u 
\]
is continuous from $G$ to $H_{\pi}$ for every fixed $u\in H_{\pi}$. The underlined Hilbert space $H_{\pi}$ is called the representation space, and the dimension of the representation $\pi$ is defined to be $\dim \pi := \dim H_{\pi}$. One can define an equivalence relation on the set of unitary representations in the following way. We say that two representations $\pi_{1}$ and $\pi_{2}$ are equivalent if there exists an intertwining operator, that is, there exists an operator $T: H_{\pi_{1}} \longrightarrow H_{\pi_{2}}$  such that the following diagram is commutative 
\begin{center}
\begin{tikzcd}
    H_{\pi_{1}} \arrow[r, "\pi_{1}(g)"] \arrow[d, "T"'] & H_{\pi_{1}} \arrow[d, "T"] \\
    H_{\pi_{2}} \arrow[r, "\pi_{2}(g)"] & H_{\pi_{2}}
\end{tikzcd}
\end{center}
for all $g\in G$. Given a unitary representation $\pi$, a closed subspace $M$ of $H_{\pi}$ is said to be an invariant subspace for $\pi$ if $\pi(g) M \subseteq M$ for all $g\in G$. A representation $\pi$ is irreducible if it does not admit non-trivial invariant subspaces. For more details on unitary representations, equivalence classes, and irreducibility, we refer to \cite[Chapter 3]{Folland16}. The set of equivalence classes of unitary representations of $G$, denoted by $\widehat{G}$, is the unitary dual of $G$. The study of random fields on locally compact groups is strongly connected to the theory of unitary representations. The latter is a complicated topic on a general non-compact non-Abelian group, but the theory is well-developed for type I groups. These are groups for which the unitary dual $\widehat{G}$ is a quasi-regular topological space and every representation can be decomposed into the direct sum of multiplicity free representations which decompose into a direct integral of irreducible representations, see Appendix \ref{appendix.A} for more details. A function $B: G \longrightarrow \mathbb{C}$ is said to be positive-definite if the matrix 
\begin{align*}
    \left( B(g_{j}^{-1} g_{i}) \right)_{i,j=1}^{n}
\end{align*}
is positive-definite for every choice of $g_{1}, \ldots , g_{n} \in G$ and every $n\in \mathbb{N}$. One has the following characterization of positive-definite functions on locally compact groups of type I.

\begin{thm}\label{thm.Bochner}
    Let $G$ be a separable, locally compact group of type I. A function $B: G \longrightarrow \mathbb{C}$ is positive definite if and only if it can be represented as 
    \begin{equation}\label{eqn.pos.def.fun}
        B(g) = \operatorname{Tr} \left( \pi (g)  F(\widehat{G} ) \right) ,
    \end{equation}
    where $\widehat{G}$ denotes the unitary dual of $G$,  $\pi$ is a unitary representation of $G$ acting on the Hilbert space $H_{\pi}$, and $F$ is a positive operator-valued measure over $\widehat{G}$, whose values are  Hermitian non-negative definite operators on the representation space $H_{\pi}$, such that 
    \begin{align}\label{e.condition.pos.def}
       \operatorname{Tr} \left( F(\widehat{G}) \right) < \infty.
    \end{align}
\end{thm}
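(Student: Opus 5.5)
The plan is to prove the two implications separately. The paper announces that the argument rests on direct integrals rather than on Yaglom's original route. I read \eqref{eqn.pos.def.fun} as
\[
B(g)=\int_{\widehat G}\operatorname{Tr}\bigl(\pi_\omega(g)\,dF(\omega)\bigr),
\]
where $\pi_\omega$ is a representative of the class $\omega\in\widehat G$ and $dF(\omega)$ is a positive trace-class operator on $H_{\pi_\omega}$.

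\emph{Sufficiency.} Fix $g_1,\dots,g_n\in G$ and $c_1,\dots,c_n\in\C$. Using $\pi(g_j^{-1}g_i)=\pi(g_j)^*\pi(g_i)$ and the cyclicity of the trace,
\[
\sum_{i,j}c_i\overline{c_j}\,B(g_j^{-1}g_i)=\int_{\widehat G}\operatorname{Tr}\bigl(A_\omega^*A_\omega\,dF(\omega)\bigr),\qquad A_\omega:=\sum_i c_i\,\pi_\omega(g_i).
\]
Each integrand is $\operatorname{Tr}\bigl(dF^{1/2}A^*A\,dF^{1/2}\bigr)\ge 0$. The condition \eqref{e.condition.pos.def} guarantees that all these traces are finite and that $B$ is well defined, bounded by $\operatorname{Tr}F(\widehat G)$, and continuous (strong continuity plus dominated convergence).

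\emph{Necessity.} I would use the GNS construction. Given a continuous positive-definite $B$, equip the space of finitely supported functions on $G$ with the form $\langle f,h\rangle=\sum f(g)\overline{h(g')}B(g'^{-1}g)$, quotient by the null vectors, and complete. The result is a Hilbert space $H$ with a cyclic vector $v$ and a continuous unitary representation $\rho$ (left translation) such that $B(g)=\langle\rho(g)v,v\rangle$.

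Since $G$ is separable and of type I, $\rho$ is a direct sum of multiplicity-free representations, and each of these is a direct integral of irreducibles over a measure on $\widehat G$. I would write
\[
H=\int^{\oplus}_{\widehat G}H_{\pi_\omega}\otimes \mathcal K_\omega\,d\mu(\omega),\qquad \rho=\int^\oplus \pi_\omega\otimes I\,d\mu,
\]
where $\mathcal K_\omega$ is a multiplicity space. Decompose $v=(v_\omega)$ with $v_\omega=\sum_k e_k\otimes u_{\omega,k}$. Then
\[
\langle\rho(g)v,v\rangle=\int\operatorname{Tr}\bigl(\pi_\omega(g)T_\omega\bigr)\,d\mu(\omega),\qquad T_\omega:=\sum_k u_{\omega,k}\otimes\overline{u_{\omega,k}},
\]
where each $T_\omega$ is positive with $\operatorname{Tr}T_\omega=\|v_\omega\|^2$. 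Set $F(E):=\int_E T_\omega\,d\mu(\omega)$. This is a positive operator-valued measure, and $\operatorname{Tr}F(\widehat G)=\|v\|^2=B(e)<\infty$.

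The main obstacle is this last step. It requires measurability of $\omega\mapsto T_\omega$, a consistent measurable choice of representatives $\pi_\omega$ on a Borel parametrization of $\widehat G$ (available precisely because of the type I hypothesis, which makes $\widehat G$ a standard Borel space), and control of the multiplicities. I would handle the multiplicities by splitting $\rho$ into its multiplicity-$m$ parts, $m=1,2,\dots,\infty$. On each part the multiplicity space can be taken as a fixed $\ell^2$-space, so $T_\omega$ is just the partial trace of $v_\omega\otimes\overline{v_\omega}$ over $\ell^2$, and it is measurable because the matrix coefficients are.
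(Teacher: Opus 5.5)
Your proposal is correct and follows essentially the same route as the paper's proof in Appendix~\ref{appendix.A}. In both, $B(g)=\langle\rho(g)v,v\rangle$ for a cyclic representation (you obtain it by GNS, the paper cites Theorem~\ref{thm.Bochner1}); $\rho$ is decomposed into direct integrals of irreducibles using the type~I hypothesis; and $B$ is read off fiberwise as in \eqref{eqn.cov.components}, with your multiplicity spaces $\mathcal K_\omega$ playing the role of the paper's sum over multiplicity-free pieces $\pi^{\ell}$ and the matrix entries of $T_\omega\,d\mu(\omega)$ in a fiber basis of $H_{\pi_\omega}$ being the paper's $F_{ij}=\sum_\ell F^{\ell}_{ij}$.

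What you add is a direct proof of the converse via $\operatorname{Tr}\bigl(dF^{1/2}A_\omega^*A_\omega\,dF^{1/2}\bigr)\ge 0$; the paper only shows that $\langle\pi(g)\xi^{0},\xi^{0}\rangle$ admits the trace form. You also make explicit two points the paper leaves implicit: the continuity of $B$, which your GNS step needs and without which necessity fails (e.g.\ $B=\mathbf{1}_{\{0\}}$ on $\mathbb{R}$), and the fiberwise reading $\int_{\widehat G}\operatorname{Tr}\bigl(\pi_\omega(g)\,dF(\omega)\bigr)$ of the trace. That reading is what gives the paper's computation a rigorous meaning, since the vectors $\tilde e^{\ell}_{i}$ used there are not an orthonormal basis of $H^{\ell}$ unless $\sigma^{\ell}$ is a unit point mass.
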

Theorem \ref{thm.Bochner} was originally proven in \cite{GelfandRaikov43, Naimark59} in the form of Theorem \ref{thm.Bochner1}, and a different version can be found in \cite[Theorem 3]{Yaglom60} and \cite[Theorem 2.18]{Malyarenko2013}. In Appendix \ref{appendix.A}  we present an alternative proof of Theorem \ref{thm.Bochner} by means of direct integrals of unitary irreducible representations and Hilbert spaces.

Let us now describe the random fields corresponding to the covariance function \eqref{eqn.pos.def.fun}. Let $\left(\Omega, \mathcal{F}, \Prob\right)$ be a probability space and $L^{2}\left(\Omega, \Prob\right)$ be the Hilbert space of complex-valued finite variance random variable. A random field $\xi$ on $G$ is a function 
\begin{align*}
    \xi : G \longrightarrow L^{2}\left(\Omega, \Prob\right),
\end{align*}
or equivalently, a collection of complex-valued finite-variance random variables $\{ \xi(g), \; g\in G\}$ defined on a common probability space $\left(\Omega, \mathcal{F}, \Prob\right)$. We assume that the field $\xi$ is mean-square continuous, that is, 
\begin{align*}
    & \E \left[ \left| \xi(g_{n} )- \xi(g)  \right|^{2} \right] \longrightarrow 0 , \text{ if } g_{n} \longrightarrow g \in G.
\end{align*}
A field is called \textit{left-homogeneous} or \textit{left-stationary} if $\E \left[\xi(g) \right]= \E \left[\xi(L_{h}(g)) \right]$ and  $\E \left[|\xi(g)|^{2} \right]= \E \left[|\xi(L_{h}(g))|^{2} \right]$ for all $h\in G$.  It is called right-homogeneous (resp. two-way homogeneous) if $L_{g}$ is replaced by $R_{g}$ (resp. $I_{g}$). For a detailed treatment of Abelian groups and compact topological groups, we refer to \cite[Section 2]{Yaglom60} and \cite{Malyarenko2013}. Before we state an analog of Theorem \ref{thm.stationary.Rn} for locally compact groups of type I  \cite[Theorem 4]{Yaglom60}, let us recall the notion of random linear operators. This is a measurable function $Z$ on a measurable space $(B, \mathcal{B})$ defined by
\begin{align*}
    Z: \Omega \times \mathcal{B} \longrightarrow \mathcal{L} (H),
\end{align*}
where $ \mathcal{L} (H)$ denotes the space of linear operators on some Hilbert space $H$, such that the map 
\begin{align*}
    &\mathcal{B}  \longrightarrow \R,
    \\
    & A\rightarrow \langle Z(A) v,w\rangle,
\end{align*}
is a random measure for every $v,w\in H$. We have the following representation theorem. 

\begin{thm}\label{thm.stationarygroup}\cite[Theorem 4]{Yaglom60}
    Let $\xi = \{ \xi(g), \; g\in G\}$ be a random field on a separable locally compact group $G$ of type I. Then $\xi$ is left-homogeneous if and only if 
    \begin{equation}\label{e.stationary.r.f.group}
        \xi(g) = \operatorname{Tr} \left( \pi(g)Z(\widehat{G}) \right) ,
    \end{equation}
    where $\widehat{G}$ denotes the unitary dual of $G$, $\pi$ is a unitary representation of $G$ acting on a Hilbert space $H_{\pi}$, and $Z$ is a random linear operator over $\widehat{G}$ such that 
    \begin{align}\label{eqn.cov.random.op.measure}
        \E \left[ \langle Z(A) u_{1} , u_{2} \rangle_{H_{\pi}} \overline{\langle Z(B) v_{1} , v_{2} \rangle}_{H_{\pi}}  \right]= \langle F(A \cap B) u_{1} , v_{1} \rangle_{H_{\pi}}  \langle u_{2} , v_{2} \rangle_{H_{\pi}} ,
    \end{align}
for any measurable subsets $A, B$ of $\widehat{G}$ and any vectors $u_{1}, u_{2}, v_{1}, v_{2} \in H_{\pi}$, where $F$ is an operator-valued measure on $\widehat{G}$, whose values are Hermitian non-negative definite operators on $H_{\pi}$ satisfying \eqref{e.condition.pos.def}. Moreover, the covariance function of $\xi$ is given by \eqref{eqn.pos.def.fun}.
\end{thm}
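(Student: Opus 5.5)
The plan is to derive the statement from Theorem \ref{thm.Bochner}, realized through the direct-integral decomposition of the cyclic representation generated by the field, in the spirit of the Kolmogorov/GNS construction.

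\emph{Forward direction.} Assume $\xi$ is left-homogeneous, and let $H_\xi\subset L^{2}(\Omega,\Prob)$ be the closed linear span of $\{\xi(g),\, g\in G\}$. Left-homogeneity means the covariance $\E[\xi(g)\overline{\xi(h)}]$ depends only on $h^{-1}g$, and we call this function $B(h^{-1}g)$. Define $U(h)\xi(g):=\xi(hg)$ on generators and extend by linearity. Because the inner products of generators are preserved, $U(h)$ extends to a unitary operator on $H_\xi$. Mean-square continuity of $\xi$ gives strong continuity of $U$, so $U$ is a unitary representation with cyclic vector $\xi(e)$, and $B(g)=\langle U(g)\xi(e),\xi(e)\rangle$.

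Since $G$ is separable and of type I, $U$ decomposes as a direct integral $\int_{\widehat G}^{\oplus}\pi_\gamma\otimes \mathrm{id}_{K_\gamma}\,d\mu(\gamma)$ over the unitary dual. Choose a unitary $\Phi:H_\xi\to\int^\oplus H_{\pi_\gamma}\otimes K_\gamma\,d\mu$ intertwining $U$. Write $\Phi(\xi(e))=(v_\gamma)_\gamma$, and identify each $v_\gamma$ with a Hilbert--Schmidt operator $V_\gamma:\overline{K_\gamma}\to H_{\pi_\gamma}$. Then
\begin{align*}
\xi(g)=\Phi^{-1}\Big(\big(\pi_\gamma(g)v_\gamma\big)_\gamma\Big),
\end{align*}
and $B(g)=\int\operatorname{Tr}(\pi_\gamma(g)V_\gamma V_\gamma^{*})\,d\mu(\gamma)$. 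Setting $dF(\gamma)=V_\gamma V_\gamma^{*}\,d\mu$ recovers \eqref{eqn.pos.def.fun}, and $\operatorname{Tr}F(\widehat G)=\|\xi(e)\|^{2}<\infty$. For the random operator, define $Z(A)$ through its matrix coefficients:
\begin{align*}
\langle Z(A)u_1,u_2\rangle:=\Phi^{-1}\Big(\mathbbm{1}_A(\gamma)\, V_\gamma^{\ast}\text{-weighted } u_1\otimes\bar u_2\Big)\in H_\xi\subset L^{2}(\Omega).
\end{align*}
Precisely, $\langle Z(A)u_1,u_2\rangle=\Phi^{-1}\big(\mathbbm{1}_A\,(u_1\otimes\cdot)\big)$, paired so that $\operatorname{Tr}(\pi(g)Z(A))=\Phi^{-1}(\mathbbm{1}_A\pi(g)v)$. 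Taking $A=\widehat G$ then gives \eqref{e.stationary.r.f.group}, and the isometry property of $\Phi$ yields \eqref{eqn.cov.random.op.measure}: the $\mathbbm{1}_{A\cap B}$ comes from orthogonality of disjoint pieces of the direct integral, and the factorized form $\langle F(A\cap B)u_1,v_1\rangle\langle u_2,v_2\rangle$ comes from the tensor structure.

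\emph{Converse direction.} Suppose $\xi$ is given by \eqref{e.stationary.r.f.group}. Expand the trace in an orthonormal basis $(e_j)$ of $H_\pi$, writing $\operatorname{Tr}(\pi(g)Z)=\sum_{j}\langle Z e_j,\pi(g)^{*}e_j\rangle$, and compute $\E[\xi(g)\overline{\xi(h)}]$ using \eqref{eqn.cov.random.op.measure}. This produces
\begin{align*}
\sum_{j,k}\langle F(\widehat G)e_j,e_k\rangle\,\langle \pi(g)^{*}e_j,\pi(h)^{*}e_k\rangle=\operatorname{Tr}\big(\pi(h^{-1}g)F(\widehat G)\big).
\end{align*}
This depends only on $h^{-1}g$, so the field is left-homogeneous and its covariance is \eqref{eqn.pos.def.fun}. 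Convergence of the double series is justified by $\operatorname{Tr}F<\infty$ and Cauchy--Schwarz.

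\emph{Main obstacle.} The hard part is the measure-theoretic bookkeeping in the forward direction. The direct integral involves multiplicity spaces, so one must choose $\Phi$ and define $Z$ so that its matrix coefficients are genuinely countably additive random measures. One must also check that the trace in \eqref{e.stationary.r.f.group} converges in $L^{2}(\Omega)$. The approach I would try first is to reduce to the multiplicity-free pieces guaranteed by the type I hypothesis, where $K_\gamma$ can be absorbed, and to define $Z$ on each piece via the spectral projections $\Phi^{-1}\mathbbm{1}_A\Phi$. These projections commute with $U$ and directly give additivity and orthogonality over disjoint sets.
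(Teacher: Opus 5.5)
Your strategy is sound, and it goes further than the paper. The paper gives no proof of this theorem beyond citing Yaglom: its appendix proves only the covariance half (Theorem \ref{thm.Bochner}), via Gelfand--Raikov--Naimark and a multiplicity-free direct-integral splitting, and then just records coordinate formulas for $\xi$ and $Z$. Realizing the cyclic representation inside $H_\xi\subset L^{2}(\Omega,\Prob)$ is exactly what is needed to carry that decomposition over to the field, and your $dF(\gamma)=V_\gamma V_\gamma^{*}\,d\mu(\gamma)$ with multiplicity spaces is correct. However, the two steps that carry the content of the statement are wrong or missing.

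\textbf{The converse.} Your displayed identity is false. Since $\langle\pi(g)^{*}e_j,\pi(h)^{*}e_k\rangle=\langle\pi(hg^{-1})e_j,e_k\rangle$, your left-hand side is a basis-dependent bilinear pairing, not a trace. For a character $\pi=\chi$ and $F=f\geq 0$ it equals $f\,\overline{\chi(h^{-1}g)}$, not $f\,\chi(h^{-1}g)$. The cause is \eqref{eqn.cov.random.op.measure} as printed: its left side is antilinear in $u_2$ and linear in $v_2$, while its right side is the reverse, so it forces $F=0$. Repairing only that, by using $\langle v_2,u_2\rangle$, gives $\operatorname{Tr}(\pi(gh^{-1})F)$, i.e.\ right-homogeneity. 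For $\xi(g)=\operatorname{Tr}(\pi(g)Z)$ to have covariance $\operatorname{Tr}(\pi(h^{-1}g)F)$, $F$ must act on the second slot:
\begin{align*}
\E\left[\langle Z(A)u_1,u_2\rangle\overline{\langle Z(B)v_1,v_2\rangle}\right]=\langle u_1,v_1\rangle\,\langle F(A\cap B)v_2,u_2\rangle .
\end{align*}
With this form the converse reads $\sum_j\langle F\pi(h)^{*}e_j,\pi(g)^{*}e_j\rangle=\operatorname{Tr}(\pi(g)F\pi(h)^{*})=\operatorname{Tr}(\pi(h^{-1}g)F)$. This is also the structure of the paper's final representation on $\Hei$: written as $\operatorname{Tr}(\pi(g)Z)$, the coefficient $\langle Z\Phi_j,\Phi_k\rangle$ of $e^{\alpha,\varepsilon}_{kj}$ has variance depending only on $k$.

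\textbf{The forward direction.} You never actually define $Z$. The phrases ``$V_\gamma^{*}$-weighted $u_1\otimes\bar u_2$'' and ``$\Phi^{-1}(\mathbbm{1}_A(u_1\otimes\cdot))$'' are not definitions. The projections $\Phi^{-1}\mathbbm{1}_A\Phi$ supply only the dependence on $A$, not the dependence on $(u_1,u_2)$. The missing ingredient is to let the rank-one operator $|u_1\rangle\langle u_2|:x\mapsto\langle x,u_2\rangle u_1$ act on the cyclic vector:
\begin{align*}
\langle Z(A)u_1,u_2\rangle:=\Phi^{-1}\Big(\big(\mathbbm{1}_A(\gamma)\,(|u_1\rangle\langle u_2|\otimes I_{K_\gamma})v_\gamma\big)_{\gamma}\Big).
\end{align*}
With this definition:
\begin{itemize}
\item $\sum_j\langle Z(A)e_j,\pi(g)^{*}e_j\rangle=\Phi^{-1}\big(\mathbbm{1}_A(\pi_\gamma(g)\otimes I)v_\gamma\big)$, with $L^{2}$-convergence by dominated convergence in the direct integral, so $A=\widehat G$ returns $\xi(g)$;
\item countable additivity is inherited from $A\mapsto\mathbbm{1}_A$ through the isometry $\Phi^{-1}$;
\item the isometry of $\Phi$ yields exactly the second-slot formula above, with $F(A)=\int_A V_\gamma V_\gamma^{*}\,d\mu$.
\end{itemize}
That last point contradicts your claim that $\langle F(A\cap B)u_1,v_1\rangle\langle u_2,v_2\rangle$ comes ``from the tensor structure''.

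Note also that $Z(A)$ exists only through its matrix coefficients. The input slot is white, so $Z(A)$ is not a bounded operator pathwise when the fibers are infinite-dimensional, and the trace must be read as an $L^2$-limit. The measure-theoretic bookkeeping you flag as the main obstacle is therefore routine. The real work is this definition and putting $F$ in the correct slot.
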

In Appendix \ref{appendix.A} we express \eqref{eqn.pos.def.fun}, \eqref{e.condition.pos.def}, \eqref{e.stationary.r.f.group}, and \eqref{eqn.cov.random.op.measure} explicitly in terms of a basis for the representation space $H_{\pi}$, by means of direct integrals of unitary representations.

\begin{rmk}
    Note that if $G=\R^{n}$ then $\widehat{G} =\R^{n}$, and for each $\lambda\in \R^{n}$ a representative in the corresponding class of unitary irreducible representations is given by 
    \begin{align*}
       & \pi_{\lambda} : \R^{n} \longrightarrow U(\C), & \pi_{\lambda} (x) : \C \longrightarrow \C , & & \pi_{\lambda} (x)z= e^{i \lambda \cdot x} z,
    \end{align*}
    and \eqref{e.stationary.r.f.group} reduces to \eqref{e.spectral.rep.real}.
\end{rmk}

As pointed out in the next section, in the case of the Heisenberg group, unitary irreducible representations are either one-dimensional, i.e. characters acting on $\C$, or infinite-dimensional Schr\"odinger representations acting on $L^{2}(\R)$, and \eqref{eqn.pos.def.fun} and \eqref{e.condition.pos.def}  take a simpler form.

\subsection{Heisenberg group} \label{sec:heisenberg}
Here we recall some general properties of the Heisenberg group and harmonic analysis on it. 

\subsubsection{General facts}\label{sec:heisenberg_general}The Heisenberg group $\Hei$ is the set $\R^{3}$ endowed with the following non-Abelian group operation 
\begin{align*}
    g_{1}g_{2} = (x_{1}, y_{1} , t_{1} ) (x_{2}, y_{2} , t_{2} )= \left(x_{1} + x_{2}, y_{1} + y_{2} , t_{1} + t_{2} + \frac{1}{2} ( x_{1} y_{2} - x_{2} y_{1}) \right).
\end{align*}
The left-invariant vector fields at $g= (x,y,t) \in \Hei$ are given by 
\begin{align*}
    & X_{g} = \partial_{x} -\frac{1}{2} y \partial_{t}, & Y_{g} = \partial_{y} +\frac{1}{2} x \partial_{t}, & & T_{g} = \partial_{t},
\end{align*}
and the set $\{X,Y\}$ satisfies H\"ormander's condition since $[X,Y]=T$. This ensures that the sub-Laplacian 
\begin{equation}\label{e.sub.lap}
  \mathcal{L} := X^{2} + Y^{2}
\end{equation}
is a hypoelliptic operator. For $g\in \Hei$, let us consider an inner product on $\mathcal{H}_{g}:=\text{span}\{ X_{g},Y_{g}\}$ so that $\{X_{g}, Y_{g}\}$ is an orthonormal system. This describes a sub-Riemannian structure on $\Hei$ where the horizontal distribution is given by  $\mathcal{H}:=\text{span}\{ X,Y\}$. For $\lambda\in (0,\infty)$, we define the dilation by $\lambda$
\begin{align*}
    & \delta_{\lambda} : \Hei \rightarrow \Hei , & \delta_{\lambda} (x,y,t) = (\lambda x, \lambda y, \lambda^{2} t),
\end{align*}
and note that the sub-Laplacian $\mathcal{L}$ is homogeneous of degree $2$ with respect to $\delta_{\lambda}$, that is,
\begin{align*}
    \mathcal{L} f (\delta_{\lambda})= \lambda^{2} ( \mathcal{L} f) (\delta_{\lambda}).
\end{align*}
We recall that a curve $s\rightarrow \gamma(s)\in \Hei$ is said to be horizontal if $\gamma^{\prime}(s) \in \mathcal{H}_{\gamma(s)}$ for a.e. $s\in\R$. The Heisenberg group comes with a natural distance which we define now.
\begin{definition}
    For any $g_{1}, g_{2} \in \Hei$, the Carnot-Carath\'eodory distance is defined by 
    \begin{align*}
         d_{cc} (g_{1}  , g_{2} ) := \inf \left\{ \int_{0}^{1} \Vert \gamma^{\prime}(t) \Vert_{\gamma (s)} ,  \gamma: [0,1] \longrightarrow \Hei , \gamma(0) = g_{1} , \gamma(1) = g_{2} ,  \gamma \text{ is horizontal }\right\}
    \end{align*}
    where $\Vert \cdot \Vert_{g}$ denotes the norm in $\mathcal{H}_{g}$ induced by the sub-Riemannian structure. 
\end{definition}
In addition to the Carnot-Carath\'eodory distance, we will use the following homogeneous norm on $\Hei$
\begin{align*}
   & | g| := \sqrt{|z|_{\R^{2}} + |t|} , & g= (z,t)= (x,y,t) \in \Hei,
\end{align*}
which induces a distance equivalent to the Carnot-Carath\'eodory one, that is, 
\[
c^{-1}\, | h^{-1} g | \leqslant d_{cc} (g,h) \leqslant c  | h^{-1} g |,
\]
for some constant $c>1$ independent of $g,h\in \Hei$, see \cite[Chapter 5]{BonfiglioliLanconelliUguzzoni2007}.

The unitary irreducible representations of the Heisenberg group are well known. For $\beta \in \R^{2}$ and $g=(z,t)\in \Hei$, let us set 
\begin{align*}
    & \pi_{\beta} (g): \C \longrightarrow \C , & \pi_{\beta} (g)w= e^{2\pi i \, \beta \cdot z } w,
\end{align*}
which is a unitary irreducible representation on the Hilbert space $H_{\pi_{\beta}}= \C$ and corresponds to characters, that is, one-dimensional representations. For $\lambda\neq 0$, $g=(x,y,t) \in \Hei$, and $\phi \in L^{2} (\R, d\xi)$, let us consider the function 
\begin{align*}
    \pi_{\lambda}(g) \phi (\xi) := e^{i\lambda \left( x\xi + \frac{1}{2} xy + t \right)} \phi ( y+\xi).
\end{align*}
It is easy to see that $\pi_{\lambda}$ is a unitary representation on the Hilbert space $H_{\pi_{\lambda}}= L^{2} (\R, d\xi)$ of complex-valued square integrable functions. Note that, for the Heisenberg group, the representation space $H_{\pi_{\lambda}}$ is independent of the representation. The unitary dual of the Heisenberg group is well-understood by the Stone-von Neumann theorem \cite[Theorem 6.50]{Folland16}.
\begin{thm}[Stone-von Neumann]\label{thm.stone.neuman}
Every unitary irreducible representation of $\Hei$ is equivalent to one and only one of the following 
\begin{align*}
    & \pi_{\lambda}, & \lambda \neq 0 , 
    \\
    &   \pi_{\beta} , & \beta \in \R^{2}.
\end{align*}
\end{thm}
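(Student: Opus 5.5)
The plan is to classify an irreducible unitary representation $\rho$ of $\Hei$ by its \emph{central character}, and then treat the two resulting cases separately. The center of $\Hei$ is $Z=\{(0,0,t): t\in\R\}$, and $\rho(0,0,t)$ commutes with every $\rho(g)$. By Schur's lemma (\cite[Chapter 3]{Folland16}), $\rho(0,0,t)$ is therefore a scalar. Being a continuous unitary character of $\R$, it has the form $e^{i\lambda t}\,\mathrm{Id}$ for a unique $\lambda\in\R$.

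\emph{Case $\lambda=0$.} Here $\rho$ factors through the quotient $\Hei/Z\cong\R^{2}$, which is abelian. Irreducible unitary representations of an abelian group are one-dimensional, again by Schur's lemma, and continuous characters of $\R^{2}$ are exactly $z\mapsto e^{2\pi i\beta\cdot z}$ for a unique $\beta\in\R^{2}$. So $\rho\simeq\pi_\beta$. Two characters are equivalent only if they are equal, so distinct $\beta$ give inequivalent representations.

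\emph{Case $\lambda\neq 0$, existence.} First I check that $\pi_\lambda$ has central character $e^{i\lambda t}$ and is irreducible. Suppose a bounded operator $T$ commutes with $\pi_\lambda$. Then $T$ commutes with the multiplication operators $e^{i\lambda x\xi}$ for all $x$, hence with all $L^\infty$ multiplications, so $T$ is itself a multiplication operator. It also commutes with all translations $\phi\mapsto\phi(\cdot+y)$, so the multiplier is constant, and $\pi_\lambda$ is irreducible.

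Next, for an arbitrary irreducible $\rho$ with central character $e^{i\lambda t}$, I define the twisted Weyl operators $W(z):=\rho(z,0)$. The group law gives
\begin{align*}
W(z)W(w)=e^{\frac{i\lambda}{2}(x_1y_2-x_2y_1)}\,W(z+w).
\end{align*}
Then I form
\begin{align*}
P:=c_\lambda\int_{\R^{2}} e^{-|\lambda||z|^{2}/4}\,W(z)\,dz,
\end{align*}
with $c_\lambda$ a normalizing constant. A Gaussian twisted-convolution computation should give
\begin{align*}
P^{*}=P,\qquad P\,W(w)\,P=e^{-|\lambda||w|^{2}/4}\,P .
\end{align*}
Taking $w=0$ shows that $P$ is a projection. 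It is nonzero, because $\{W(z)\}$ acts irreducibly, and the Weyl transform is injective on $L^{1}$. It has rank one: if $u,v$ are unit vectors in the range of $P$, then $\langle W(w)u,v\rangle = e^{-|\lambda||w|^{2}/4}\langle u,v\rangle$, so $u$ and $v$ cannot be orthogonal without contradicting irreducibility.

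Now pick a unit vector $v$ in the range of $P$, and the corresponding vector $\phi_0$ (a suitable Gaussian) in the range of the analogous projection for $\pi_\lambda$. The inner products
\begin{align*}
\langle W(z)v,\,W(w)v\rangle = e^{\frac{i\lambda}{2}(\cdots)}\,\langle W(z-w)v,v\rangle
\end{align*}
are determined entirely by the commutation relation together with $PW(\cdot)P$. They therefore coincide with the corresponding inner products for $\pi_\lambda$ and $\phi_0$. Hence the map $W(z)v\mapsto\pi_\lambda(z,0)\phi_0$ extends to an isometry. Its domain and range are dense by irreducibility, so it is a unitary intertwiner, giving $\rho\simeq\pi_\lambda$.

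\emph{Uniqueness.} Representations with different central characters are inequivalent, since intertwining preserves scalars on the center. This separates all the $\pi_\lambda$ from each other and from the $\pi_\beta$.

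The main obstacle is the identity $PW(w)P=e^{-|\lambda||w|^{2}/4}P$ and the rank-one conclusion. This is an explicit Gaussian integral involving the symplectic phase, and I will carry it out first on $\pi_\lambda$ itself, where $P$ should be the projection onto the Gaussian $e^{-|\lambda|\xi^{2}/2}$.
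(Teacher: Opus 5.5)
Your outline is von Neumann's classical proof and its structure is sound. It is not, however, the route behind the paper. The paper gives no argument, only the citation \cite[Theorem 6.50]{Folland16}. That result sits in the chapter on induced representations, and there the classification comes from the Mackey machine. $\Hei$ is written as the semidirect product of the abelian normal subgroup $N=\{(0,y,t)\}$ with $\{(x,0,0)\}\cong\R$. The dual action on $\widehat{N}\cong\R^{2}$ (coordinates $(\eta,\lambda)$ dual to $(y,t)$) has two kinds of orbits: the lines $\{\lambda=\mathrm{const}\}$, $\lambda\neq0$, are free orbits, each inducing one Schr\"odinger-type representation; the points with $\lambda=0$ are fixed and give the characters. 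The imprimitivity theorem shows nothing else occurs.

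You instead fix the central character by Schur's lemma. For $\lambda\neq0$ you use a Gaussian in the twisted convolution algebra to produce a rank-one projection, whose unit vector is cyclic with Gram data forced by the Weyl relations. Your route is elementary and self-contained, and it exhibits the cyclic Gaussian vector explicitly. With little extra work it also shows that every representation with central character $e^{i\lambda t}$ is a multiple of the Schr\"odinger one. The Mackey route needs the imprimitivity theorem, but it applies verbatim to other regular semidirect products.

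Two points must be repaired before your argument closes.

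First, with the paper's group law, the formula for $\pi_\lambda$ is an anti-homomorphism. Indeed $\pi_\lambda(0,y,0)\pi_\lambda(x,0,0)\phi(\xi)=e^{i\lambda(x\xi+xy)}\phi(\xi+y)$, whereas $(0,y,0)(x,0,0)=(x,y,-\tfrac12xy)$ and $\pi_\lambda(x,y,-\tfrac12xy)\phi(\xi)=e^{i\lambda x\xi}\phi(\xi+y)$. Hence $\pi_\lambda(z,0)\pi_\lambda(w,0)=e^{-\frac{i\lambda}{2}(x_1y_2-x_2y_1)}\pi_\lambda(z+w,0)$, which is the opposite phase to your $W$. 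The two Gram matrices you compare are therefore complex conjugates rather than equal, and the intertwiner step fails as written. You checked the central character of $\pi_\lambda$ but not the homomorphism property. Work instead with, e.g., $\tilde\pi_\lambda(x,y,t):=\pi_{-\lambda}(x,y,-t)$, i.e.\ $\phi\mapsto e^{-i\lambda(x\xi+\frac12xy-t)}\phi(\xi+y)$. This is a genuine representation with central character $e^{i\lambda t}$; equivalently, flip the sign of the symplectic term in the group law. The defect lies in the statement's conventions, not in your method.

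Second, $P\neq0$ needs injectivity of $F\mapsto\int F(z)W(z)\,dz$ for your arbitrary $\rho$, not just for the Schr\"odinger model. What drives it is $\lambda\neq0$, not irreducibility. With $z=(x_1,y_1)$, $w=(x_2,y_2)$ one has $W(w)^{*}W(z)W(w)=e^{i\lambda(x_1y_2-x_2y_1)}W(z)$. So $\rho(F)=0$ forces the Euclidean Fourier transform of $F(z)\langle W(z)u,u\rangle$ to vanish, which gives $F=0$.

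Likewise, checking $PW(w)P=e^{-|\lambda||w|^{2}/4}P$ only inside the model does not immediately give it for a general $\rho$. It transfers once you note that both sides are images of $L^{1}$ functions: the left side is the Gaussian twisted-convolved with a twisted translate of itself. Injectivity in the model then identifies the two functions. With $c_\lambda=|\lambda|/(2\pi)$ the identity is correct, and hence so is $P^{2}=P$.
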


In particular, it follows that the unitary dual can be identified with the space 
\begin{align*}
    & \widehat{\Hei}= \R^{2} \,  \dot{\cup} \, \R^{\ast},
\end{align*}
where $\R^{\ast}$ denotes the set of non-zero real numbers, proving that $\Hei$ is of type I and all irreducible unitary representations of $\Hei$ are either infinite-dimensional, Schr\"odinger representations $\pi_{\lambda}$, or one-dimensional, characters $\pi_{\beta}$. The representations  $\pi_{\lambda}$'s can be used to construct the group Fourier transform. For $\lambda\in \R^{\ast}$ and $f\in L^{2} ( \Hei, dg)$, where $dg$ denotes the Haar measure, let us set
\begin{align*}
    & \hat{f} (\lambda) : L^{2}(\R, d\xi) \longrightarrow L^{2}(\R, d\xi),
    \\
    &  \hat{f} (\lambda) \phi:= \int_{\Hei} f(g) \pi_{\lambda} (g) \phi dg,
\end{align*}
where the integral is a Bochner integral taking values in $ L^{2}(\R, d\xi)$. It can be shown that the operator $\hat{f} (\lambda)$ is Hilbert-Schimdt for  $f\in L^{2} ( \Hei, dg)$, \cite[Section 1.3]{Thangavelu98}.

\subsubsection{Fourier analysis on $\mathbb{H}$}\label{sec:heinseberg_fourier}
Let us now recall spectral theory for the sub-Laplacian $\mathcal{L}$, harmonic analysis on $\Hei$, and an analog of the Fourier inversion theorem \eqref{e.convolution.R}. For $\lambda >0$, $\varepsilon = \pm 1$, and $j,k \in \mathbb{N}_{\geq 0}$, let us consider the following function 
\begin{equation}\label{eqn.e.function}
    e^{\lambda ,\varepsilon}_{k,j} (g):= \left\langle \pi_{ \varepsilon\frac{\lambda}{2k+1}} (g) \Phi_{k} , \Phi_{j} \right\rangle, \quad g\in \Hei,
\end{equation}
where $\pi_{\mu}$ is the Schr\"odinger representation corresponding to $\mu \in \R^{\ast}$, and $\Phi_{k}$ is the $k$-th Hermite function, that is,
\begin{equation*}\label{e.hermite.fun}
    \Phi_{k} (t):= \left( 2^{k} \sqrt{\pi}\, k! \right)^{-\frac{1}{2}} H_{k} (t) e^{-\frac{1}{2}t^{2}},
\end{equation*}
and $H_{k}$ denotes the $k$-th Hermite polynomial 
\begin{equation*}\label{e.hermite.pol}
    H_{k} (t):= (-1)^{k} e^{t^{2}} \frac{d^{k}}{dt^{k}} \left( e^{-t^{2}} \right).
\end{equation*}
The family $\{\Phi_{k}, \, k=0,1, \ldots \}$ forms an orthonormal basis for $L^{2} (\R, d\xi)$, where by $d\xi$ we denoted the Lebesgue measure, and the function $e^{\lambda ,\varepsilon}_{k,j}$ is the matrix element corresponding to the representation $\pi_{\varepsilon\frac{\lambda}{2k+1}}$. It is known that the functions $\{e^{\lambda ,\varepsilon}_{k,j}, \, \lambda>0, \, \varepsilon = \pm 1, \, j,k =0,1,\ldots \}$ are (generalized) eigenfunctions for the sub-Laplacian, that is, they satisfy
\begin{equation*}
    \mathcal{L} e^{\lambda ,\varepsilon}_{k,j} = (2k+1) \left| \varepsilon\frac{\lambda}{2k+1} \right| e^{\lambda ,\varepsilon}_{k,j} = \lambda \, e^{\lambda ,\varepsilon}_{k,j},
\end{equation*}
but they are not in $L^{2}(\Hei, dg)$, see \cite[p. 51]{Thangavelu98}. Let us introduce the following function, for $\lambda >0$, $\varepsilon = \pm1$, and $k\in \mathbb{N}_{\geq 0}$
\begin{align}
    & \varphi_{\lambda}^{k} (g) := \frac{\sqrt{\pi}}{2} \sum_{\varepsilon=\pm 1} e_{k,k}^{\lambda, \varepsilon}(g) \label{e.sum.eps} . 
\end{align}
which is also an eigenfunction with eigenvalue $\lambda$, where $L_{k}$ denotes the Laguerre polynomial of type 0, that is, 
\begin{align*}
    L_{k} (t) e^{-t} = \frac{1}{k!}\frac{d^{k}}{dt^{k}} \left( e^{-t} t^{k} \right) .
\end{align*}
Let us now collect some properties of the eigenfunctions $e_{k,j}^{\lambda, \varepsilon}$ and $\varphi^{k}_{\lambda}$.

\begin{prop}\label{prop.properties.e.functions}
    For $k,j \in \mathbb{N}_{\geq 0}$, $\lambda>0$, and $\varepsilon = \pm 1$, let $e_{k,j}^{\lambda, \varepsilon}$ be the eigenfunctions of $\mathcal{L}$ given by \eqref{eqn.e.function}. Then 
    \begin{align}
        & e_{k,k}^{\lambda, \varepsilon}(h^{-1}g) = \sum_{j=0}^{\infty} e_{k,j}^{\lambda, \varepsilon}(g) \overline{e_{k,j}^{\lambda, \varepsilon}(h)},\label{e.duplication}
        \\
        &  \varphi_{k}^{\lambda}(g)= L_{k} \left( \frac{1}{2} \frac{\lambda}{2k+1}  \vert z\vert^{2}\right) e^{-\frac{1}{4} \frac{\lambda}{2k+1} \vert z\vert^{2}} \cos \left( \frac{\lambda}{2k+1} t \right), \quad g= (z,t) \in \Hei,   \label{e.cov.alpha.k.H}
        \\
        & \sum_{j=0}^{\infty} \left|  e_{k,j}^{\lambda, \varepsilon}(g)  \right|^{2} = 1, 
    \end{align}
    for all $k\in \mathbb{N}_{\geq 0}, \, g,h\in \Hei,  \text{ and }  \varepsilon=\pm 1$.
\end{prop}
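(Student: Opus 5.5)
The plan is to derive all three identities from the fact that $\pi_\mu$ is a unitary representation and $\{\Phi_j\}$ is an orthonormal basis of $L^2(\R,d\xi)$. Set $\mu=\varepsilon\lambda/(2k+1)$.

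\emph{Duplication formula \eqref{e.duplication}.} Since $\pi_\mu$ is a homomorphism with $\pi_\mu(h^{-1})=\pi_\mu(h)^{\ast}$, we have
\[
e^{\lambda,\varepsilon}_{k,k}(h^{-1}g)=\langle \pi_\mu(g)\Phi_k,\pi_\mu(h)\Phi_k\rangle .
\]
Expanding both vectors in the basis $\{\Phi_j\}$ and applying Parseval gives
\[
\sum_j \langle \pi_\mu(g)\Phi_k,\Phi_j\rangle\,\overline{\langle \pi_\mu(h)\Phi_k,\Phi_j\rangle}
=\sum_j e^{\lambda,\varepsilon}_{k,j}(g)\,\overline{e^{\lambda,\varepsilon}_{k,j}(h)}.
\]
Taking $h=g$ yields the third identity, because
\[
\sum_j|e^{\lambda,\varepsilon}_{k,j}(g)|^2=\|\pi_\mu(g)\Phi_k\|^2=\|\Phi_k\|^2=1
\]
by unitarity. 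Equivalently, set $h=g$ in \eqref{e.duplication} and use $e^{\lambda,\varepsilon}_{k,k}(e)=\|\Phi_k\|^2=1$.

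\emph{Explicit formula \eqref{e.cov.alpha.k.H}.} Write $g=(x,y,t)$. The central variable factors out:
\[
\pi_\mu(g)\Phi_k(\xi)=e^{i\mu t}\,e^{i\mu(x\xi+\frac12 xy)}\Phi_k(\xi+y).
\]
Hence
\[
e^{\lambda,\varepsilon}_{k,k}(g)=e^{i\mu t}\int_\R e^{i\mu(x\xi+\frac12 xy)}\Phi_k(\xi+y)\Phi_k(\xi)\,d\xi ,
\]
where the Hermite functions are real. This integral is the classical Fourier--Wigner transform of $(\Phi_k,\Phi_k)$, also called the special Hermite function $\Phi_{kk}$ (Thangavelu's Lectures on Hermite and Laguerre expansions, or \cite[Ch.~1]{Thangavelu98}). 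I would quote the known evaluation: for $\mu>0$, rescaling $\xi$ reduces it to $\mu=1$, where it equals $L_k(\tfrac12|z|^2)e^{-\frac14|z|^2}$ up to the normalisation used. Rescaling back gives $L_k(\tfrac12|\mu||z|^2)e^{-\frac14|\mu||z|^2}$ times a constant. For $\varepsilon=-1$ the spatial factor is the complex conjugate, hence the same real radial function. Summing over $\varepsilon=\pm1$ then turns $e^{i\mu t}+e^{-i\mu t}$ into $2\cos(\lambda t/(2k+1))$. The prefactor $\frac{\sqrt\pi}{2}$ in \eqref{e.sum.eps} should absorb the remaining normalising constant.

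The main obstacle is this Laguerre evaluation together with bookkeeping of the constants and the scaling in $\mu$, since a direct computation is tedious. If quoting the result is not acceptable, I would verify it in two steps. First, the function is radial in $z$: a rotation of $(x,y)$ is implemented by a metaplectic operator under which $\Phi_k$ is an eigenvector with a phase, and the phases cancel in the diagonal coefficient. Second, the restriction to $y=0$ can be computed directly. There the integral becomes $\int e^{i\mu x\xi}\Phi_k(\xi)^2\,d\xi$, the Fourier transform of $\Phi_k^2$, which is known to be a Laguerre function $L_k(\cdot)e^{-\cdot/2}$ of $\mu x^2/2$. That computation can be done via the generating function of Hermite polynomials.

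A slicker alternative is also available. The function is a radial eigenfunction of the twisted Laplacian, obtained by applying $\mathcal L$ to $e^{i\mu t}F(z)$, with eigenvalue $(2k+1)|\mu|$. It is smooth at the origin with value $1$, and such radial eigenfunctions are exactly the Laguerre functions. Consistency of normalisation is fixed by evaluating at $g=e$.
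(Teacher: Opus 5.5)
Your proofs of \eqref{e.duplication} and of $\sum_j|e^{\lambda,\varepsilon}_{k,j}(g)|^2=1$ (unitarity plus Parseval in $\{\Phi_j\}$) are exactly the paper's and are fine. One small caveat comes from the paper itself. With the group law on $\Hei$ used there, the displayed formula for $\pi_\lambda$ actually satisfies $\pi_\lambda(g_1g_2)=\pi_\lambda(g_2)\pi_\lambda(g_1)$. So a sign convention must be fixed before writing $\pi_\mu(h^{-1}g)=\pi_\mu(h)^{\ast}\pi_\mu(g)$: either the sign of $t$ in $\pi_\lambda$, or that of the symplectic term in the group law. The genuine gap is in \eqref{e.cov.alpha.k.H}, at exactly the two steps you do not carry out: the rescaling to $\mu=\pm1$ and the constant.

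\emph{The rescaling.} Substituting $\xi\mapsto|\mu|^{-1/2}\xi$ in $\int e^{i\mu(x\xi+\frac12 xy)}\Phi_k(\xi+y)\Phi_k(\xi)\,d\xi$ also rescales the Hermite functions, so it does not reduce the coefficient to the case $\mu=\pm1$.
\begin{itemize}
\item That reduction needs the adapted functions $\Phi^{\mu}_k(\xi)=|\mu|^{1/4}\Phi_k(|\mu|^{1/2}\xi)$, the eigenfunctions of $-d\pi_\mu(\mathcal L)=-\partial_\xi^2+\mu^2\xi^2$.
\item Your metaplectic radiality argument and your twisted-Laplacian shortcut need the same property.
\item The unscaled $\Phi_k$ of \eqref{eqn.e.function} have it only when $\lambda=2k+1$.
\end{itemize}
For $k=0$ a Gaussian integral gives
\[
e^{\lambda,\varepsilon}_{0,0}(x,y,t)=e^{i\varepsilon\lambda t}\,e^{-\frac14(\lambda^{2}x^{2}+y^{2})},
\]
which for $\lambda\neq1$ is neither radial nor an eigenfunction of $\mathcal L$. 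Likewise, on the $x$-axis the Fourier transform of $\Phi_k^2$ is a Laguerre function of $\mu^2x^2/2$, not of $\mu x^2/2$.

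\emph{The constant.} Your own check at $g=e$ gives $e^{\lambda,\varepsilon}_{k,k}(e)=\|\Phi_k\|^2=1$. So the left side of \eqref{e.cov.alpha.k.H} at $e$ equals $\frac{\sqrt\pi}{2}\cdot 2=\sqrt\pi$, while the right side equals $L_k(0)=1$; the prefactor in \eqref{e.sum.eps} absorbs nothing. Hence \eqref{e.cov.alpha.k.H} is not provable from the definitions as written, and citation does not rescue it. The paper's proof quotes $e^{\lambda,1}_{k,k}(g)=\pi^{-1/2}e^{i\frac{\lambda}{2k+1}t}L_k(\cdot)e^{-(\cdot)/2}$, which contradicts $e^{\lambda,1}_{k,k}(e)=1$, i.e.\ the third identity of the same proposition at $g=e$.

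What your plan does prove is the following. Replace $\Phi_j$ by $\Phi^{\mu}_j$ throughout \eqref{eqn.e.function}, with $\mu=\varepsilon\lambda/(2k+1)$; the other two identities are unaffected. Then your rescaling argument goes through and gives
\[
\varphi^{k}_{\lambda}(z,t)=\sqrt{\pi}\,L_k\left(\frac{\lambda|z|^2}{2(2k+1)}\right)e^{-\frac{\lambda|z|^2}{4(2k+1)}}\cos\left(\frac{\lambda t}{2k+1}\right).
\]
This is \eqref{e.cov.alpha.k.H} multiplied by $\sqrt\pi$, or exactly \eqref{e.cov.alpha.k.H} if the prefactor $\frac{\sqrt\pi}{2}$ in \eqref{e.sum.eps} is replaced by $\frac12$. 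You should state and prove this corrected version rather than assert that the constants work out.
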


\begin{proof}
    For any $g,h \in \Hei$, we have that 
    \begin{align*}
        & e_{k,k}^{\lambda, \varepsilon}(h^{-1}g) = \left\langle \pi_{\varepsilon \frac{\lambda}{2k+1}} (h^{-1}g) \Phi_{k}  , \Phi_{k}  \right\rangle = \left\langle \pi_{\varepsilon \frac{\lambda}{2k+1}} (g) \Phi_{k}  , \pi_{\varepsilon \frac{\lambda}{2k+1}} (h) \Phi_{k}  \right\rangle 
        \\
        & = \left\langle \sum_{s=0}^{\infty} \left\langle \pi_{\varepsilon \frac{\lambda}{2k+1}} (g) \Phi_{k} , \Phi_{s} \right\rangle \Phi_{s} \, , \, \sum_{j=0}^{\infty} \left\langle \pi_{\varepsilon \frac{\lambda}{2k+1}} (h) \Phi_{k} , \Phi_{j} \right\rangle \Phi_{j}\right\rangle
        \\
        & = \sum_{s,j=0}^{\infty} \left\langle \pi_{\varepsilon \frac{\lambda}{2k+1}} (g) \Phi_{k} , \Phi_{s} \right\rangle \overline{\left\langle \pi_{\varepsilon \frac{\lambda}{2k+1}} (h) \Phi_{k} , \Phi_{j} \right\rangle} \langle\Phi_{s} , \Phi_{j} \rangle
        \\
        & =  \sum_{j=0}^{\infty} \left\langle \pi_{\varepsilon \frac{\lambda}{2k+1}} (g) \Phi_{k} , \Phi_{j} \right\rangle \overline{\left\langle \pi_{\varepsilon \frac{\lambda}{2k+1}} (h) \Phi_{k} , \Phi_{j} \right\rangle} =  \sum_{j=0}^{\infty} e_{k,j}^{\lambda, \varepsilon}(g) \overline{e_{k,j}^{\lambda, \varepsilon}(h)},
    \end{align*}
    where we used that Schr\"odinger representations are unitary, and that $\{\Phi_{k},  \, k=0,1,\ldots\}$ is an orthonormal basis for $L^{2} (\R, d\xi)$. By \cite[eq. 1.4.19 and 1.4.20]{Thangavelu98} and \cite[eq. 6.5 and 6.10 ]{Strichartz89} we have that, for $g= (z,t) \in \Hei$
    \begin{align*}
        &e^{\lambda, 1}_{k,k}(g) : = \left\langle \pi_{\frac{\lambda}{2k+1}} (g) \Phi_{k} , \Phi_{k} \right\rangle= \pi^{-\frac{1}{2}} e^{i\frac{\lambda}{2k+1} t} L_{k} \left( \frac{1}{2} \frac{\lambda}{2k+1}  \vert z\vert^{2}\right) e^{-\frac{1}{4} \frac{\lambda}{2k+1} \vert z\vert^{2}},
        \\ 
        & e^{\lambda, -1}_{k,k}(g) = \pi^{-\frac{1}{2}} e^{-i\frac{\lambda}{2k+1} t} L_{k} \left( \frac{1}{2} \frac{\lambda}{2k+1}  \vert z\vert^{2}\right) e^{-\frac{1}{4} \frac{\lambda}{2k+1} \vert z\vert^{2}},
    \end{align*}
    and \eqref{e.cov.alpha.k.H} follows. Lastly, 
    \begin{align*}
     \sum_{j=0}^{\infty} \left|  e_{k,j}^{\lambda, \varepsilon}(g)  \right|^{2} =  \sum_{j=0}^{\infty} \left| \left\langle \pi_{\varepsilon \frac{\lambda}{2k+1}} (g) \Phi_{k}  , \Phi_{j}  \right\rangle   \right|^{2} = \left| \left| \pi_{\varepsilon \frac{\lambda}{2k+1}}(g)\Phi_{k}\right| \right|^{2} = \Vert \Phi_{k}\Vert = 1,
     \end{align*}
    since $\pi_{\lambda}(g)$ is a unitary operator for all $g\in \Hei$ and $\lambda \neq 0$ and $\{ \Phi_{k}, \, k=0,1, \ldots\}$ is an orthonormal basis for  $L^{2}(\R , d\xi)$, and the proof is completed.
\end{proof}

\begin{rmk}
    One can view \eqref{e.duplication} as an analog on $\Hei$ of the addition formulae for spherical Harmonics $Y_{\ell m}$ on the sphere $\mathbb{S}^{2}$ and Bessel functions $J_{m}$ on $\R^{2}$, that is, 
    \begin{align*}
       & \sum_{m=-\ell}^{\ell} Y_{\ell m} (p) Y_{\ell m}(q)= \frac{2\ell+1}{4\pi} P_{\ell} \left(d_{\mathbb{S}^{2}} (p,q) \right), & p,q\in \mathbb{S}^{2},
        \\
       & \sum_{k=-\infty}^{\infty} J_{k} ( \vert x \vert ) J_{k} ( \vert y \vert ) \cos (\theta k) = J_{0} (\vert x-y\vert) , & x,y, \in \R^{2},
    \end{align*}
    where $P_{\ell}$ is the Legendre polynomial of order $\ell$, and $\theta$ is the angle between the vectors $x,y\in \R^{2}$.
\end{rmk}

On $\R^{n}$, a special role is played by isotropic fields, that is, fields whose covariance function is radial. Algebraically, these are fields whose covariance function is invariant under the action of the group of isometries fixing the origin, the rotation group. The group of isometries for $\Hei$ equipped with the Carnot-Carath\'eodory distance is given by the semidirect product $U(1)\rtimes \Hei$, where $U(1)\cong \mathbb{S}^{1}$ is the one-dimensional unitary group. The group $U(1)\rtimes \Hei$ acts on $\Hei$ by 
\begin{align*}
    & (e^{i\phi}, (z,t) )\cdot (w,s) := (z,t) (e^{i\phi}w, s),
\end{align*}
for any $(e^{i\phi},\; (z,t) )\in U(1)\rtimes \Hei, (w,s) \in \Hei$. In particular, any isometry can be written as a composition of the following maps
\begin{enumerate}
    \item left multiplication by $g\in \Hei$,  $h \rightarrow g^{-1}h$;
    
    \item rotations around the vertical component $(z,t) \rightarrow (e^{i\phi}z, t)$, for $\phi \in U(1)$;

    \item reflections $(z,t) \rightarrow\ (\bar{z}, -t)$,
\end{enumerate}
see \cite[Section 3.1]{Thangavelu98} and \cite{Strichartz89,Koranyi83, Isangulova19}. A function $f: \Hei \rightarrow \C$ is said to be radial if 
\begin{align*}
    f(z,t) = f(\vert z\vert ,t),
\end{align*}
for all $(z,t) \in\Hei$, that is, radial functions on $\Hei$ are functions invariant under the action of $U(1)$, which is the subgroup of $U(1)\rtimes \Hei$ fixing the identity. In analogy with radial functions in $\R^{n}$ being functions invariant under the action of the rotation group.

Let us introduce the function 
\begin{align}
    & F_{\lambda} (g) := \frac{8}{\pi^{2}} \sum_{k=0}^{\infty} \frac{1}{(2k+1)^{2}} \varphi^{k}_{\lambda} (g), \label{e.cov.H}
\end{align}
normalized so that $F_{\lambda} (e) =1$, where $e\in \Hei$ denotes the identity element. We have the following spectral decomposition on the Heisenberg group. 

\begin{thm}\label{thm.spectral.decomp}
    Let $f\in L^{2}(\Hei, dg)$, where $dg$ denotes the Haar measure. Then 
    \begin{align*}\label{e.spectral.decomp}
        f(g) = \int_{0}^{\infty} \left( \mathcal{P}_{\lambda }f \right)  (g) d\mu(\lambda),
    \end{align*}
    where $d\mu(\lambda)$ denotes the Plancherel measure on $\widehat{\Hei}= \R^{2} \, \dot{\cup} \, \R^{\ast}$
    \begin{align*}
        & d\mu(\lambda) := \frac{1}{(2\pi)^{2}} \vert \lambda \vert d\lambda \, 
    \end{align*}
    which is supported on $\R^{\ast}$, and 
    \begin{equation}\label{e.convolution.H}  \mathcal{P}_{\lambda} f = \frac{\pi^{2}}{4} f \ast F_{\lambda},
    \end{equation}
    where $F_{\lambda}$ is given by \eqref{e.cov.H}.
\end{thm}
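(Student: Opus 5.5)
The plan is to start from the group Fourier inversion (Plancherel) formula on $\Hei$ and reorganize it according to the spectral decomposition of the sub-Laplacian. For $f$ in the Schwartz class (and then by density in $L^{2}(\Hei,dg)$), the inversion formula reads
\begin{align*}
f(g) = \frac{1}{(2\pi)^{2}} \int_{\R^{\ast}} \operatorname{Tr}\left( \pi_{\mu}(g)^{\ast} \hat{f}(\mu) \right) |\mu| \, d\mu ,
\end{align*}
where the characters $\pi_{\beta}$ do not contribute because the Plancherel measure is supported on $\R^{\ast}$. I will compute the trace in the Hermite basis $\{\Phi_{k}\}$, which gives
\begin{align*}
\operatorname{Tr}\left( \pi_{\mu}(g)^{\ast}\hat f(\mu)\right)=\sum_{k} \int_{\Hei} f(h) \left\langle \pi_{\mu}(g^{-1}h)\Phi_{k},\Phi_{k}\right\rangle dh .
\end{align*}
This is a sum over $k$ of convolutions of $f$ with diagonal matrix coefficients. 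The key point is that, by \cite[Section 1.4]{Thangavelu98}, $\Phi_{k}$ is an eigenvector of $d\pi_{\mu}(\mathcal{L})$ with eigenvalue $-(2k+1)|\mu|$. Hence $h\mapsto\langle \pi_{\mu}(h)\Phi_k,\Phi_k\rangle$ is an $\mathcal{L}$-eigenfunction with eigenvalue $(2k+1)|\mu|$, and it is radial, as computed in the proof of Proposition \ref{prop.properties.e.functions}.

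Next I reparametrize so that the eigenvalue becomes the integration variable. In the $k$-th term I write $\mu=\varepsilon \lambda/(2k+1)$ with $\lambda>0$ and $\varepsilon=\pm1$. Then $|\mu|\,d\mu = \lambda\, d\lambda/(2k+1)^{2}$, and the matrix coefficient becomes $e^{\lambda,\varepsilon}_{k,k}$ from \eqref{eqn.e.function}. Summing over $\varepsilon$ produces $\frac{2}{\sqrt\pi}\varphi^{k}_{\lambda}$ by \eqref{e.sum.eps}; here $\varphi^k_\lambda$ is even in $t$ and real, so the adjoint and orientation issues in the convolution disappear. Exchanging $\sum_k$ with $\int d\lambda$ (Fubini, justified for Schwartz $f$ since $|e^{\lambda,\varepsilon}_{k,k}|\le \pi^{-1/2}$ and the weights $(2k+1)^{-2}$ are summable), I obtain
\begin{align*}
f(g)=\int_{0}^{\infty} \Big( f\ast \sum_{k}\tfrac{c}{(2k+1)^{2}}\varphi^{k}_{\lambda}\Big)(g)\, \frac{\lambda\,d\lambda}{(2\pi)^{2}}
\end{align*}
for an explicit constant $c$. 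Comparing with \eqref{e.cov.H} identifies the inner kernel as a constant multiple of $F_{\lambda}$, and I will track the constants so that they combine into the factor $\pi^{2}/4$ in \eqref{e.convolution.H}. The normalization $\sum_k (2k+1)^{-2}=\pi^2/8$ is exactly what makes $F_\lambda(e)=1$, and it is where the $\pi^2$ comes from. Finally, each $\mathcal{P}_\lambda f$ is an $\mathcal{L}$-eigenfunction with eigenvalue $\lambda$, since $\mathcal{L}$ is left invariant and therefore passes onto the right factor of the convolution.

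I expect the main obstacle to be the bookkeeping of normalizations rather than any conceptual step. Three conventions have to agree: the one in $\pi_{\lambda}$ (no $2\pi$ in the exponent), the Plancherel constant $(2\pi)^{-2}$, and the $\sqrt{\pi}/2$ together with the $\pi^{-1/2}$ in the Laguerre formula. I will check the constant independently by testing the identity on a Gaussian-type $f$, for instance the heat kernel, for which $\hat f(\mu)$ is diagonal in the Hermite basis. A secondary point is extending from Schwartz functions to $L^2$. There I will interpret the $\lambda$-integral as an $L^{2}$-limit of truncations $\int_{0}^{R}$, using the Plancherel theorem to get convergence.
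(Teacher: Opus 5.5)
Your strategy is the right one. It is precisely the derivation behind Thangavelu's Theorem 2.1.1, which the paper's proof does not redo. Instead the paper quotes it in the form $\mathcal P_\lambda f=\sum_k(2k+1)^{-2}\sum_{\varepsilon=\pm1}f\ast\tilde e^{\varepsilon\lambda}_k$, with the explicit radial functions $\tilde e^{\varepsilon\lambda}_k(z,t)=e^{i\varepsilon\frac{\lambda}{2k+1}t}L_k(\frac12\frac{\lambda}{2k+1}|z|^2)e^{-\frac14\frac{\lambda}{2k+1}|z|^2}$, and then rewrites $\sum_\varepsilon\tilde e^{\varepsilon\lambda}_k=2\varphi^k_\lambda$.

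\textbf{The key step fails as written.} For the paper's $\pi_\mu$ one has $d\pi_\mu(X)=i\mu\xi$ and $d\pi_\mu(Y)=\partial_\xi$, so $d\pi_\mu(\mathcal L)=\partial_\xi^2-\mu^2\xi^2$. Its eigenfunctions are the rescaled functions $\Phi^\mu_k(\xi)=|\mu|^{1/4}\Phi_k(|\mu|^{1/2}\xi)$, which is what Thangavelu's Section 1.4 uses. They are not $\Phi_k$ unless $|\mu|=1$. For example, $\langle\pi_\mu(x,y,0)\Phi_0,\Phi_0\rangle=e^{-(\mu^2x^2+y^2)/4}$, which is neither radial nor an $\mathcal L$-eigenfunction when $|\mu|\neq1$. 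So with the fixed basis $\{\Phi_k\}$, the $k$-th term of the trace is not a convolution with a radial eigenfunction, and the substitution $\mu=\varepsilon\lambda/(2k+1)$ does not produce $\varphi^k_\lambda$. The definition \eqref{eqn.e.function} has the same slip; the paper's proof avoids it by working directly with $\tilde e^{\varepsilon\lambda}_k$. The repair is immediate: the trace is basis-independent, so for each $\mu$ compute it in $\{\Phi^\mu_k\}_k$. Then $\langle\pi_\mu(z,t)\Phi^\mu_k,\Phi^\mu_k\rangle=e^{i\mu t}L_k(\frac12|\mu||z|^2)e^{-\frac14|\mu||z|^2}$.

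\textbf{The constant.} This matrix coefficient equals $1$ at $e$, so there is no factor $\pi^{-1/2}$; the formula quoted in the proof of Proposition \ref{prop.properties.e.functions} is off by that factor. Summing over $\varepsilon$ therefore gives $2\varphi^k_\lambda$, with $\varphi^k_\lambda$ as in \eqref{e.cov.alpha.k.H}, not $\frac{2}{\sqrt\pi}\varphi^k_\lambda$. If you keep your factor together with the normalization $F_\lambda(e)=1$, you get $\pi^{3/2}/4$ instead of $\pi^2/4$. With the correct factor, $\frac{\lambda\,d\lambda}{(2\pi)^2}\sum_k\frac{2}{(2k+1)^2}f\ast\varphi^k_\lambda=\frac{\pi^2}{4}(f\ast F_\lambda)\,d\mu(\lambda)$, which is exactly the paper's last line.

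\textbf{The Fubini justification does not work.} A uniform bound on the coefficients together with $\sum_k(2k+1)^{-2}<\infty$ only gives $\sum_k(2k+1)^{-2}\int_0^\infty\|f\|_{L^1}\lambda\,d\lambda=\infty$. You need decay in $\lambda$, which the eigenfunction property supplies. Let $\tilde{\mathcal L}$ be the right-invariant sub-Laplacian. Integration by parts gives $(\mathcal Lf)\ast e=f\ast\tilde{\mathcal L}e$. For radial $e$ we have $\tilde{\mathcal L}e=\mathcal Le=-\lambda e$, since the two operators differ by $2\partial_\theta\partial_t$. Hence $|f\ast e|\le\min(\|f\|_{L^1},\lambda^{-N}\|\mathcal L^Nf\|_{L^1})$. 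This is integrable against $\lambda\,d\lambda$ for $N\ge3$ and summable in $k$ thanks to the weights $(2k+1)^{-2}$.
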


\begin{proof}
    The proof follows from \cite[Theorem 2.1.1]{Thangavelu98} and we write it for completeness. By \cite[p. 68]{Thangavelu98} one has that 
    \begin{align*}
        \mathcal{P}_{\lambda} f (g) = \sum_{k=0}^{\infty}\frac{1}{(2k+1)^{2}} \sum_{\varepsilon = \pm 1}  f \ast \tilde{e}_{k}^{\varepsilon \lambda} (g),
    \end{align*}
    where the functions $\tilde{e}_{k}^{\varepsilon \lambda}$ are given by 
    \begin{align*}
        \tilde{e}_{k}^{\varepsilon \lambda}(g) = \tilde{e}_{k}^{\varepsilon \lambda}(z,t)  = e^{i \frac{\varepsilon \lambda}{2k+1} t }L_{k} \left( \frac{1}{2} \frac{\lambda}{2k+1}  \vert z\vert^{2}\right) e^{-\frac{1}{4} \frac{\lambda}{2k+1} \vert z\vert^{2}},
    \end{align*}
see \cite[p. 52 and eq. (1.4.20)]{Thangavelu98}. Thus,

    \begin{align*}
        &  \mathcal{P}_{\lambda} f (g) = \sum_{k=0}^{\infty}\frac{2}{(2k+1)^{2}}  f \ast \varphi^{k}_{ \lambda} (g) = \frac{\pi^{2}}{4} f\ast F_{\lambda}.
    \end{align*}
\end{proof}

\begin{rmk}\label{rmk.uniqueness}
The complex-valued Berry random field on $\R^{2}$ is the unique (in distribution) field satisfying the following properties 
\begin{enumerate}
    \item it is an isotropic standard complex Gaussian field;

    \item the trajectories are almost surely (generalized) eigenfunctions for the Laplace operator with eigenvalue $\lambda>0$;
\end{enumerate}
and it is characterized by its covariance function $2J_{0} (\sqrt{\lambda} | \cdot |)$, which is the unique smooth radial solution to the differential equation 
\begin{align*}
  &  \Delta_{\R^{n}} f + \lambda \, f =0 , & f(0)=2.
\end{align*}
Note that for the real-valued Berry random field the covariance is $J_{0} (\sqrt{\lambda} | \cdot |)$ and the initial condition $f(0)=1$. In light of the analogy between the Bessel function and the function $F_{\lambda}$ \eqref{e.convolution.R}, \eqref{e.convolution.H}, and \eqref{eq.complex.cov.berry}, it is reasonable to define a complex-valued Berry-Heisenberg random wave as a field having covariance $2F_{\lambda}$. One can then prove that such field would not be unique in the same way Berry random field is on Euclidean spaces. This is related to the fact that, on the Heisenberg group, there  are countably many independent radial solutions to the equation
\begin{align*}
    &  \mathcal{L} f + \lambda \, f =0 , & f(e)=2,
\end{align*}
as the functions $\{2\varphi^{k}_{\lambda}, \,k=0,1,\ldots\}$ and $f(g):= 2J_{0} ( \sqrt{\lambda} \,|z| )$ for $g=(z,t)$ all satisfy this equation, see \cite[Ch. 2]{Thangavelu98}. In the proof of Theorem \ref{thm.stat.field.H} it will be clear how the functions $\varphi^{k}_{\lambda} (\cdot)$ arise from non-equivalent unitary irreducible Schr\"odinger representations $\pi_{\lambda}$, while $J_{0} ( \sqrt{\lambda} \,\cdot )$ arises from the characters $\pi_{\beta}$. 
\end{rmk}

To take into account the existence of infinitely many independent deterministic radial eigenfunctions corresponding to the same eigenvalue, we introduce a function  $F_{\lambda}^{(c)}$ to extend $F_{\lambda}$. Let us consider the set
\begin{align}\label{def:O}
    & \mathcal{O}^{1}:= \left\{ c = (c_{k})_{k=-1}^{\infty} \, : \, c_{k} \geq 0 \text{ for all } k, \;   |c|:= \sum_{k=-1}^{\infty} c_{k}=1 \right\}
\end{align}
and for each $\lambda>0$, $c\in \mathcal{O}^{1}$, define 
\begin{align}
    & F_{\lambda}^{(c)} (g) :=c_{-1}  J_{0} ( \sqrt{\lambda} \,|z| ) + \sum_{k=0}^{\infty} c_{k} \, \varphi_{\lambda}^{k} (g). \label{e.cov.H.c}
\end{align}

In the next section we prove that for any fixed $\lambda>0$ and $c\in \mathcal{O}^{1}$, there exists a complex-valued random field $\{ \xi (g) ,\, g\in \Hei\}$ with covariance function $2F_{\lambda}^{(c)}$, or equivalently, a real-valued field with covariance $F_{\lambda}^{(c)}$. Moreover, the field is unique in  the following sense: if $Y$ is any centered complex-valued Gaussian unit-variance field whose law is invariant under isometries fixing the identity, such that $  \mathcal{L} Y + \lambda \, Y =0$, then $Y$ has covariance $F_{\lambda}^{(c)}$ for some $c\in \mathcal{O}^{1}$.

\section{Berry-Heisenberg random waves}\label{sec:berry-heisenbergRW}
The goal of this section is to define Berry-Heisenberg random waves. In Proposition \ref{prop.berry.horizontal} and  Proposition \ref{prop.approx.CLT} we connect them to the Euclidean Berry random wave model, and in Theorem \ref{thm.stat.field.H} we prove its existence and uniqueness.  

\subsection{Definition and connection to the classical Berry model}
We start with the definition of $(\alpha, c)$-Berry-Heisenberg random waves. 

\begin{definition}
     For $\alpha>0$ and $c\in \mathcal{O}^1$ fixed, we refer to the left-invariant Gaussian random field $\{  \xi_{\alpha,c} (g), \, g\in \Hei \}$ with zero mean and covariance 
     \begin{align*}
           \E\left[ \xi_{\alpha ,c} (g) \overline{\xi_{\alpha, c}(h)} \right] =2F_{\lambda}^{(c)} (h^{-1} g),
     \end{align*}
     as the  $(\alpha, c)$-Berry-Heisenberg random wave. 
\end{definition}

In Theorem \ref{thm.stat.field.H} we show that such field exists, is unique, and can be written as 
   \begin{equation*}\label{def:representation.r.f.H}
    \xi_{\alpha,c} (g) =  \sqrt{c_{-1}}\, b^{\C}_{\alpha} (z)+\sum_{k=0}^{\infty} \sqrt{c_{k}} \, \xi^{k}_{\alpha} (g) , \quad g=(z,t) \in \Hei,
\end{equation*}
where $ b^{\C}_{\alpha}$ is a complex Berry random wave and 
\begin{equation*}
     \xi^{k}_{\alpha} (g) := \pi^{\frac{1}{4}} \sum_{\varepsilon = \pm 1} \sum_{j=0}^{\infty} a^{\alpha ,\varepsilon}_{kj} e^{\alpha, \varepsilon}_{kj} (g) ,
\end{equation*}
where  $e^{\alpha, \varepsilon}_{kj}$ are the generalized $\mathcal{L}$-eigenfunctions given by \eqref{eqn.e.function} and $a^{\alpha, \varepsilon}_{kj}$ are complex-valued Gaussian random variables independent of $b^{\C}_{\alpha}$ and such that
\begin{align*}
    \E \left[ a^{\alpha, \varepsilon}_{kj} \overline{a^{\alpha, \varepsilon^{\prime}}_{k^{\prime}j^{\prime}}} \right] = \delta_{kk^{\prime}}\delta_{jj^{\prime}}\delta_{\varepsilon \varepsilon^{\prime}}.
\end{align*}
We refer to $ \xi^{k}_{\alpha}$ as the $k$-component of $\xi_{\alpha, c}$. 

\begin{rmk}
    Note that, for every $\alpha>0$, the two Gaussian random functions $g\rightarrow \xi_{\alpha ,c} (g)$ and $g\rightarrow \xi_{1 ,c} (\delta_{\sqrt{\alpha}} (g))$ have the same distribution, where $\delta$ denotes the dilation on $\Hei$. This is consistent with $b_{\alpha}^{\C} (z)$ and $b_{1}^{\C} (\sqrt{\alpha} \, z)$ having the same distribution. 
\end{rmk}

Let us now describe the connection between the fields $\{ \xi_{\alpha,c} (g) , \, g\in \Hei\}$ and $\{ \xi^{k}_{\alpha} (g) , \, g\in \Hei\}$ and the classical Berry random field in $\R^{2}$. First, we show that as $k\rightarrow \infty$, the family of fields  $\{ \xi^{k}_{\alpha} (g) , \, g\in \Hei\}_{k=0}^{\infty}$ converges to the Berry random field on $\R^{2}$ in the sense of finite-dimensional distributions.  Then, we prove that the field $\{ \xi_{\alpha,c} (g) , \, g\in \Hei\}$ can be described as the limit of superpositions of random waves, that is, $\mathcal{L}$-eigenfunction, in analogy with the Euclidean case  \cite{Berry1977}. Note that, by  Proposition \ref{prop.properties.e.functions}, the covariance function of the field $\{ \xi^{k}_{\alpha} (g), g\in \Hei\}$ is given by 
\begin{align*}
    \E\left[ \xi_{\alpha}^{k} (g) \overline{\xi_{\alpha}^{k}(h)} \right] = \sqrt{\pi} \sum_{\varepsilon = \pm 1} \sum_{j=0}^{\infty} e^{\alpha, \varepsilon}_{kj}(g) \overline{e^{\alpha, \varepsilon}_{kj}(h)} =2\varphi_{\alpha}^{k} (h^{-1 }g),
\end{align*}
and $ \xi_{\alpha}^{k} (g)$ is a complex Gaussian random variable with variance $2$.

\begin{prop}[Connection to classical Berry on the horizontal space]\label{prop.berry.horizontal}
    Let $\xi_{\alpha,c}$ be the $(\alpha, c)$-Berry-Heisenberg random fields, and $\xi_{\alpha}^{k}$ its $k$-th component. Then the sequence of random fields $\{\xi_{\alpha}^{k}(g), \, g\in\Hei ,  \, k \in \mathbb{N}_{\geq 0} \}$ converges to the complex-valued Berry random field $\{b^{\C}_{\alpha} ( \pi(g)), \, g\in\Hei   \}$, in the sense of finite-dimensional distributions, where $\pi(g)=z\in \R^{2}$ denotes the projection on the horizontal space.
\end{prop}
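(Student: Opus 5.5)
Both $\xi^{k}_{\alpha}$ and $\Xi(g):=b^{\C}_{\alpha}(\pi(g))$ are centered complex Gaussian fields. Convergence of finite-dimensional distributions therefore reduces to convergence of their second-order structure. Fix points $g_1,\dots,g_m\in\Hei$. The vector $(\xi^{k}_{\alpha}(g_1),\dots,\xi^{k}_{\alpha}(g_m))$ is a centered complex Gaussian vector. Its law is determined by the Hermitian covariance matrix $\E[\xi^{k}_{\alpha}(g_i)\overline{\xi^{k}_{\alpha}(g_j)}]=2\varphi^{k}_{\alpha}(g_j^{-1}g_i)$ together with the pseudo-covariance $\E[\xi^{k}_{\alpha}(g_i)\xi^{k}_{\alpha}(g_j)]$.

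I would first check that the pseudo-covariance vanishes identically. This holds because the $a^{\alpha,\varepsilon}_{kj}$ are standard circularly-symmetric complex Gaussians, so $\E[a a']=0$; the same is true for $b^{\C}_{\alpha}$ by \eqref{eqn.berry.complex}. The problem is then reduced to showing that the Hermitian covariances converge, after which L\'evy's continuity theorem applies: the characteristic function of a circular complex Gaussian vector is $\exp(-\tfrac14 \langle \Sigma u,u\rangle)$ (up to convention), which is continuous in $\Sigma$.

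The key step is the pointwise limit
\[
\lim_{k\to\infty} 2\varphi^{k}_{\alpha}(z,t)=2J_{0}(\sqrt{\alpha}\,|z|),
\]
which I would derive from the explicit formula \eqref{e.cov.alpha.k.H}. Write $\mu_k=\alpha/(2k+1)$. The factor $\cos(\mu_k t)\to1$ since $\mu_k\to0$. For the radial part I would use the classical Mehler--Heine type asymptotic for Laguerre polynomials,
\[
e^{-x/(2n)}L_{n}\!\left(\tfrac{x}{n}\right)\to J_{0}(2\sqrt{x}) \quad (n\to\infty),
\]
uniformly on compact sets. Setting $x_k=\tfrac12\mu_k|z|^2$ gives $k x_k=\tfrac{\alpha k}{2(2k+1)}|z|^2\to \alpha|z|^2/4$, so $2\sqrt{k x_k}\to\sqrt{\alpha}|z|$. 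The exponential factor $e^{-\mu_k|z|^2/4}\to1$. Combining, $L_k(x_k)e^{-x_k/2}\to J_0(\sqrt{\alpha}|z|)$. To be careful, I would apply the Mehler--Heine statement in its uniform form, $L_n(y/n)\to J_0(2\sqrt y)$ uniformly for $y$ in compacts, with $y=kx_k$, which converges to a finite limit.

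For the identification with the target field, note that $\pi(g_j^{-1}g_i)=z_i-z_j$ because the group law is additive in the horizontal variables. Hence the limit covariance $2J_0(\sqrt\alpha|z_i-z_j|)$ is exactly that of $b^{\C}_{\alpha}(\pi(g_i))$ by \eqref{eq.complex.cov.berry}.

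The only genuine obstacle is the Laguerre asymptotic, with the argument scaling like $1/k$ and the shift $2k+1$ versus $k$. I would either cite Szeg\H{o}'s Mehler--Heine theorem or prove it directly. For a direct proof, expand $L_k(y/k)=\sum_{m}\binom{k}{m}\frac{(-y/k)^m}{m!}$. Note that $\binom{k}{m}k^{-m}\to1/m!$ termwise with domination by $y^m/(m!)^2$, and apply dominated convergence to get $\sum_m (-y)^m/(m!)^2=J_0(2\sqrt y)$.
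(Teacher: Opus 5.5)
Your proposal is correct and follows the same route as the paper: reduce convergence of finite-dimensional distributions to convergence of the covariance $2\varphi^{k}_{\alpha}(h^{-1}g)$, computed from \eqref{e.cov.alpha.k.H}, and apply the Mehler--Heine limit $L_{n}(t/n)\to J_{0}(2\sqrt{t})$. You also make explicit two points the paper leaves implicit: the vanishing pseudo-covariance needed for complex Gaussians, and the uniform version of the Laguerre limit, which is required because the argument $k x_k$ varies with $k$.
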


\begin{proof}
Note that $\xi_{\alpha}^{k}$ is a centered Gaussian field, so it is enough to prove that  
\begin{align*}
        \lim_{k\rightarrow \infty}  \E\left[ \xi_{\alpha}^{k} (g) \overline{\xi_{\alpha}^{k}(h)} \right]  =  \E\left[ b^{\C}_{\alpha} (z) \overline{b^{\C}_{\alpha}(w)} \right] ,
    \end{align*}
    for any $g=(z,t), h=(w,s) \in \Hei$. We have that   
    \begin{align*}
        & \E\left[ \xi_{\alpha}^{k} (g) \overline{\xi_{\alpha}^{k}(h)} \right] = 2\varphi_{\alpha}^{k} (h^{-1 }g) =2 L_{k} \left( \frac{1}{2} \frac{\alpha}{2k+1}  \vert z-w\vert^{2}\right) e^{-\frac{1}{4} \frac{\alpha}{2k+1} \vert z-w\vert^{2}} \cos \left( \frac{\alpha}{2k+1}( t-s) \right),
    \end{align*}
    and the result then follows from the following expansion for  Laguerre polynomials \cite[formula 8.1.8]{Szego39} 
    \[
    \lim_{n\rightarrow \infty} L_{n} (t/n)= J_{0} (2\sqrt{t})
    \]
    where $J_{0}$ denotes the Bessel function of first kind of order zero. Thus, 
    \begin{align*}
        & \lim_{k\rightarrow \infty}  \E\left[ \xi_{\alpha}^{k} (g) \overline{\xi_{\alpha}^{k}(h)} \right] =2 J_{0} \left(\sqrt{\alpha} \, | z-w |\right) =  \E\left[ b^{\C}_{\alpha} (z) \overline{b^{\C}_{\alpha}(w)} \right] .
    \end{align*}
\end{proof}

\begin{prop}\label{prop.approx.CLT}
    Let us fix $c\in \mathcal{O}^{1}$ and $N\in \mathbb{N}$ and let $\{ \phi_{\ell} , \, \theta^{\ell}_{k}, \, \psi^{\ell}_{j}, \; \ell = 1,\ldots N \}_{k,j=0}^{\infty}$ be a sequence of i.i.d. $\text{Unif}(0,2\pi)$, $\{ \varepsilon_{\ell}, \; \ell = 1,\ldots N\}$ be i.i.d. $\text{Unif}\{-1,1\}$, and $\{ \eta_{\ell}, \; \ell = 1,\ldots N\}$ be i.i.d. uniform random variables on $\mathbb{S}^{1}$. Moreover, we assume that $ \phi_{\ell} , \, \theta^{\ell}_{k}, \, \psi^{\ell}_{j}, \varepsilon_{\ell}, \eta_{\ell}$ are all independent from each other for all $k,j=0,1,\ldots,  \ell = 1,\ldots N$. Let us define the random wave 
    \begin{align*}
       & u_{\alpha}^{N} (g):= \frac{1}{\sqrt{N}} \sum_{\ell =1}^{N}  u_{\alpha, \ell}(g),
    \end{align*}
    where 
    \begin{align*}
        & u_{\alpha, \ell} (g) :=\sqrt{2c_{-1}}  e^{i \left( \sqrt{\alpha} \, z \cdot \eta_{\ell} + \phi_{\ell} \right)}+  \pi^{\frac{1}{4}} \sum_{k,j=0}^{\infty} \sqrt{2c_{k}} e^{i \left( \theta^{\ell}_{k} + \psi^{\ell}_{j}\right)} e^{\alpha, \varepsilon_{\ell}}_{k,j} (g), & g= (z,t) \in \Hei.
    \end{align*}
    Then, the finite dimensional distributions of $u_{\alpha}^{N}$ converge in law to the distribution of $\xi_{\alpha,c}$. 
\end{prop}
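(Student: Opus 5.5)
The plan is to apply the multivariate central limit theorem for i.i.d. complex random vectors. Fix points $g_1,\ldots,g_m\in\Hei$. The vectors $U_\ell:=(u_{\alpha,\ell}(g_1),\ldots,u_{\alpha,\ell}(g_m))$, $\ell=1,\ldots,N$, are i.i.d. because each $U_\ell$ is a fixed measurable function of the independent block $(\phi_\ell,\theta^\ell_k,\psi^\ell_j,\varepsilon_\ell,\eta_\ell)$. Once we show that each $U_\ell$ is centered, square integrable, and has the right second moments, the CLT gives that $N^{-1/2}\sum_\ell U_\ell$ converges to a complex Gaussian vector. Since $\xi_{\alpha,c}$ is a centered Gaussian field, it then suffices to match the Hermitian covariance $\E[u(g)\overline{u(h)}]=2F^{(c)}_\alpha(h^{-1}g)$ and the pseudo-covariance $\E[u(g)u(h)]=0$. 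The latter is needed because a complex Gaussian vector is determined only by both.

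\textbf{Well-definedness and moments.} First I check that the series defining $u_{\alpha,\ell}$ converges in $L^2(\Omega)$. The phases $e^{i(\theta_k+\psi_j)}$ are unit modulus. For distinct pairs $(k,j)\neq(k',j')$ we have $\E[e^{i(\theta_k+\psi_j)}\overline{e^{i(\theta_{k'}+\psi_{j'})}}]=0$, since at least one of the uniform angles appears nontrivially. Hence, conditionally on $\varepsilon_\ell$, the terms are orthogonal. By Proposition \ref{prop.properties.e.functions}, $\sum_j|e^{\alpha,\varepsilon}_{kj}(g)|^2=1$, so
\[
\E|u_{\alpha,\ell}(g)|^2=2c_{-1}+\sqrt{\pi}\sum_k 2c_k<\infty .
\]
Centering follows because every term carries an independent uniform phase. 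The pseudo-covariance vanishes because $\E[e^{i(\phi+\phi)}]=0$ and $\E[e^{i(\theta_k+\psi_j+\theta_{k'}+\psi_{j'})}]=0$ always, while the cross terms between the two summands also vanish by independence of $\phi_\ell$ from the rest.

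\textbf{Covariance computation.} The cross terms between the character part and the Schrödinger part vanish by independence of $\phi_\ell$. For the character part, I use uniform $\eta$ on $\mathbb{S}^1$:
\[
\E\, e^{i\sqrt\alpha (z-w)\cdot\eta}=J_0(\sqrt\alpha|z-w|),
\]
which gives $2c_{-1}J_0(\sqrt\alpha|z-w|)$. For the Schrödinger part, orthogonality of the phases reduces the covariance to the diagonal. Averaging over $\varepsilon_\ell\in\{\pm1\}$ with probability $1/2$ each and applying the addition formula \eqref{e.duplication} gives
\[
\sqrt\pi\sum_k 2c_k\tfrac12\sum_{\varepsilon}\sum_j e^{\alpha,\varepsilon}_{kj}(g)\overline{e^{\alpha,\varepsilon}_{kj}(h)}=\sqrt\pi\sum_k c_k\sum_\varepsilon e^{\alpha,\varepsilon}_{kk}(h^{-1}g)=2\sum_k c_k\varphi^k_\alpha(h^{-1}g),
\]
using \eqref{e.sum.eps}. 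The total is $2F^{(c)}_\alpha(h^{-1}g)$, and $J_0(\sqrt\alpha|z-w|)$ is indeed the $z$-part of $h^{-1}g$. This matches the covariance of $\xi_{\alpha,c}$ and, by the same calculation, that of representation \eqref{representation.r.f.H.intro}.

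\textbf{Main obstacle.} The delicate point is justifying the interchange of expectation with the doubly infinite sum in $(k,j)$ and the use of the CLT. Only finite second moments are needed for the CLT, and these follow from $L^2$ convergence of the series via the bound above. For the exchange of expectation and sum, I would truncate to $k,j\le M$, compute exactly, and pass $M\to\infty$ using $L^2(\Omega)$ convergence, which makes inner products continuous. Finally, I apply the CLT to the real $2m$-dimensional vector $(\operatorname{Re}U_\ell,\operatorname{Im}U_\ell)$, whose covariance is determined by the covariance and pseudo-covariance computed above. This yields convergence of finite-dimensional distributions to those of $\xi_{\alpha,c}$.
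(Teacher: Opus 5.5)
Your proposal is correct and follows the paper's proof: a CLT for the i.i.d.\ vectors $(u_{\alpha,\ell}(g_1),\ldots,u_{\alpha,\ell}(g_m))$, plus a covariance computation that uses orthogonality of the phases, averaging over $\varepsilon_\ell$, the addition formula \eqref{e.duplication}, and $J_0(\sqrt{\alpha}\,|z|)=\E[e^{i\sqrt{\alpha}\,z\cdot\eta}]$; your check that the pseudo-covariance $\E[u(g)u(h)]$ vanishes and your $L^2(\Omega)$ justification of the $(k,j)$-series supply details the paper leaves implicit. One remark, which comes from the paper and is not a flaw in your argument: your variance $2c_{-1}+2\sqrt{\pi}\sum_{k\ge 0}c_k$ equals $2F^{(c)}_\alpha(e)$ when $\varphi^k_\alpha$ is taken as defined in \eqref{e.sum.eps} (which gives $\varphi^k_\alpha(e)=\sqrt{\pi}$), but this contradicts the value $\varphi^k_\alpha(e)=1$ asserted through \eqref{e.cov.alpha.k.H}, so the paper's normalizations are off by a factor $\sqrt{\pi}$.
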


\begin{proof}
For each $g\in \Hei$, $\{u_{\alpha, \ell} (g)\}_{\ell=1}^{\infty}$ is a sequence of i.i.d. centered complex random variables, and hence the random vector 
    \begin{align*}
        \left(u_{\alpha}^{N} (g) , u_{\alpha}^{N} (h) \right) = \frac{1}{\sqrt{N}} \sum_{\ell =1}^{N} \left(u_{\alpha, \ell} (g) , u_{\alpha, \ell} (h) \right),
    \end{align*}
    converges in law to a centered complex Gaussian random variable with covariance matrix given by the covariance of the vector $\left(u_{\alpha, \ell} (g) , u_{\alpha, \ell} (h) \right)$. Note that, for $g=(z,t), h=(w,s) \in \Hei$
    \begin{align*}
        & \E\left[ u_{\alpha, \ell}(g) \overline{ u_{\alpha, \ell}(h)} \right] 
        \\
        &= 2c_{-1} \E\left[ e^{i \sqrt{\alpha} \,( z -w) \cdot \eta_{\ell}  } \right] + 2 \sqrt{\pi} \sum_{k, k^{\prime}, j,j^{\prime}=0}^{\infty}\sqrt{c_{k}} \sqrt{c_{k^{\prime}}}\E\left[e^{i \left( \theta^{\ell}_{k} + \psi^{\ell}_{j}\right)} e^{\alpha, \varepsilon_{\ell}}_{k,j} (g)  e^{-i \left( \theta^{\ell}_{k^{\prime}} + \psi^{\ell}_{j^{\prime}}\right)} \overline{e^{\alpha, \varepsilon_{\ell}}_{k^{\prime},j^{\prime}} (h) } \right]
        \\
        & =2c_{-1} \E\left[ e^{i \sqrt{\alpha} \,( z -w) \cdot \eta_{\ell}  } \right] +2 \sqrt{\pi}  \sum_{k, k^{\prime}, j,j^{\prime}=0}^{\infty}\sqrt{c_{k}} \sqrt{c_{k^{\prime}}}\E\left[e^{i \left( \theta^{\ell}_{k}  - \theta^{\ell}_{k^{\prime}}\right) } \right]\E\left[e^{i \left( \psi^{\ell}_{j}  - \psi^{\ell}_{j^{\prime}}\right) } \right] \E\left[e^{\alpha, \varepsilon_{\ell}}_{k,j} (g)   \overline{e^{\alpha, \varepsilon_{\ell}}_{k^{\prime},j^{\prime}} (h) } \right]
        \\
        & =2c_{-1} \E\left[ e^{i \sqrt{\alpha} \,( z -w) \cdot \eta_{\ell}  } \right] +\sqrt{\pi}    \sum_{k, j=0}^{\infty}c_{k} \left( e^{\alpha, 1}_{k,j} (g)   \overline{e^{\alpha, 1}_{k,j} (h) } +  e^{\alpha, -1}_{k,j} (g)   \overline{e^{\alpha, -1}_{k,j} (h) }    \right) 
        \\
        &  =2c_{-1} \E\left[ e^{i \sqrt{\alpha} \,( z -w) \cdot \eta_{\ell}  } \right] + \sqrt{\pi} \sum_{k=0}^{\infty} c_{k} \sum_{\varepsilon = \pm 1}  \sum_{j=0}^{\infty}  e^{\alpha, \varepsilon}_{k,j} (g)   \overline{e^{\alpha, \varepsilon}_{k^{\prime},j^{\prime}} (h) } 
        \\
        & =  2c_{-1} \E\left[ e^{i \sqrt{\alpha} \,( z -w) \cdot \eta_{\ell}  } \right] + \sqrt{\pi} \sum_{k=0}^{\infty} c_{k} \sum_{\varepsilon = \pm 1}    e^{\alpha, \varepsilon}_{k,k} (h^{-1}g)   
        \\
        & =  2c_{-1} J_{0} \left( \sqrt{\alpha} \, \vert z -w \vert  \right) +2 \sum_{k=0}^{\infty} c_{k}   \varphi^{k}_{\alpha} (h^{-1}g) = 2F_{\alpha}^{(c)}  (h^{-1}g),
    \end{align*}
    where we used Proposition \ref{prop.properties.e.functions} and the following representation of the Bessel function 
    \begin{align*}
        & J_{0} \left( \sqrt{\alpha} \, \vert z \vert\right) =  \E\left[ e^{i \sqrt{\alpha} \,z \cdot X  } \right],
    \end{align*}
    for any $z\in \R^{2}$, where $X$ is uniformly distributed on $\mathbb{S}^{1}$.
\end{proof}

\subsection{Existence and uniqueness}

The aim of this section is to prove existence and uniqueness of the $(\alpha,c)$-Berry-Heisenberg random field. First, let us prove that if a Gaussian field has covariance function given by $F^{(c)}_{\alpha}$ for some  $c\in \mathcal{O}^{1}$ and $\alpha>0$, then its trajectories are almost surely smooth. The fact that $F^{(c)}_{\alpha}$ is indeed a covariance function will follow from Theorem \ref{thm.stat.field.H}. Existence and uniqueness of the field are the content of Theorem \ref{thm.stat.field.H}, where we also prove that  the sample trajectories are generalized eigenfunctions for the sub-Laplacian $\mathcal{L}$.

\begin{thm}\label{thm.smooth.field}
    Let $\alpha>0$ and $c\in \mathcal{O}^{1}$ be fixed, and assume that $\xi : \Hei \rightarrow \mathbb{C}$ is a stationary Gaussian random field with covariance function $F^{(c)}_{\alpha}$ given by \eqref{e.cov.H.c}. Then there exists a modification of $\{ \xi (g) , \, g\in \Hei\}$ such that, for every $n,m \in \mathbb{N}$, the functions 
    \begin{align*}
        & g \longrightarrow X^{n}_{g} \xi (g) , & g \longrightarrow Y^{n}_{g}\xi(g), 
        \\
        & g \longrightarrow Z^{n}_{g}\xi(g), & g \longrightarrow X^{n}_{g}Y^{m}_{g} \xi (g),
    \end{align*}
    are almost surely continuous, and the random variables
    \begin{align*}
        & X^{n}_{g} \xi (g), &Y^{n}_{g}\xi(g),  && Z^{n}_{g}\xi(g),&&X^{n}_{g}Y^{m}_{g} \xi (g)
    \end{align*}
    are in  $L^{2} (\Omega, \Prob)$ for all $g\in \Hei$, where $X^{n}_{g} := X \circ \dots \circ X$ $n$-times.
\end{thm}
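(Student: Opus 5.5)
We will run the standard Kolmogorov-type argument for smoothness of Gaussian fields, as in Adler--Taylor, but with left-invariant vector fields in place of partial derivatives. We read $Z$ in the statement as the vertical field $T=\partial_t$. The first task is regularity of the covariance. Write $C(g,h)=F^{(c)}_{\alpha}(h^{-1}g)$. Each summand $J_0(\sqrt{\alpha}|z|)$ and $\varphi^k_\alpha(z,t)$ is real-analytic, with the explicit form \eqref{e.cov.H.c}. We will show that for any left-invariant differential monomials $D_1, D_2$ in $X,Y,T$, the mixed derivative $D_{1,g}\overline{D_{2,h}}\,C(g,h)$ exists and is continuous, and that the series in $k$ may be differentiated termwise. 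For this we need bounds on $D\varphi^k_\alpha$ that are uniform in $k$ on compact sets, and summable against $c_k$ with $\sum c_k=1$.

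These bounds come from the representation picture. By \eqref{e.sum.eps}, $\varphi^k_\alpha$ is a sum of matrix coefficients $\langle \pi_\mu(g)\Phi_k,\Phi_k\rangle$ with $\mu=\pm\alpha/(2k+1)$. Applying $X,Y,T$ corresponds to applying $d\pi_\mu(X),d\pi_\mu(Y),d\pi_\mu(T)$. These are, up to constants, $\partial_\xi$, $i\mu\xi$ and $i\mu$, or with the paper's convention $i\mu\xi$ and $\partial_\xi$, in the rescaled oscillator picture. On $\Phi_k$ they act as creation and annihilation operators scaled by $\sqrt{|\mu|}$. Hence $\|d\pi_\mu(D)\Phi_k\|\le C_D\,(|\mu|(2k+1+2n))^{n/2}$ for a monomial $D$ of degree $n$. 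Since $|\mu|(2k+1)=\alpha$, this is bounded uniformly in $k$. By unitarity, $|D_1 \overline{D_2}\langle\pi_\mu(h^{-1}g)\Phi_k,\Phi_k\rangle|\le \|d\pi(D_1)\Phi_k\|\,\|d\pi(D_2)\Phi_k\|$ is bounded uniformly in $k$ and $g,h$. The same holds with an extra derivative, which gives a uniform Lipschitz bound. The $J_0$ term is the character part, with $|\beta|$ fixed, and is trivially smooth with bounded derivatives. So every derivative series converges uniformly, and $D_1\overline{D_2}C$ is continuous and in fact Lipschitz.

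Next we construct the derivative fields. For a horizontal field $X$, define $X\xi(g)$ as the $L^2(\Omega)$-limit of $(\xi(g\exp(sX))-\xi(g))/s$. By the covariance regularity, these difference quotients are Cauchy, using the second mixed derivative of $C$ along $X$. The limit is Gaussian, and its covariance is $X_g\overline{X_h}C$. Iterating gives $D\xi(g)\in L^2(\Omega,\Prob)$ for every monomial $D$; these are jointly Gaussian with covariance $D_g\overline{D_h}C(g,h)$. This proves the $L^2$ claim.

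The last step, which we expect to be the main obstacle, is continuity of the modifications. For the Gaussian field $D\xi$ we have
\[
\E|D\xi(g)-D\xi(h)|^2 = 2\,\mathrm{Re}\big(K(e)-K(h^{-1}g)\big),
\]
where $K$ is the corresponding derivative of $F^{(c)}_\alpha$. Since $K$ is smooth with bounded derivatives, this increment is at most $C|h^{-1}g|^{2}$ in Euclidean terms, and hence at most $C\,d_{cc}(g,h)^{2}$ locally. Gaussian hypercontractivity then gives $\E|D\xi(g)-D\xi(h)|^{2p}\le C_p\, d(g,h)^{2p}$. The Kolmogorov–Chentsov criterion on the homogeneous space $(\Hei,d_{cc})$, of Hausdorff dimension $4$, applies once $2p>4$, so take $p>2$. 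This yields a continuous modification of each $D\xi$.

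The delicate point is to choose a single modification of $\xi$ whose pathwise derivatives coincide with these continuous versions. We will handle it as in Adler–Taylor Thm 1.4.2. Take continuous versions on countable dense sets and use the integral identity $\xi(g\exp(sX))-\xi(g)=\int_0^s X\xi(g\exp(rX))\,dr$, which holds a.s. for all rational data and extends by continuity. This identifies the pathwise $X$-derivative, and the argument is repeated inductively over the countably many monomials $X^n$, $Y^n$, $T^n$, $X^nY^m$.
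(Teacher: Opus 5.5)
Your proof is correct, but it gets the regularity of the covariance by a different mechanism from the paper.

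\textbf{The paper's route.} For the zeroth-order increment, it bounds $1-\varphi^k_\alpha(h^{-1}g)$ by hand, uniformly in $k$, using $|e^{-s/2}L^{(n)}_k(s)|\le\binom{k+n}{k}$ and $|1-\cos x|\le|x|$. This gives $\E|\xi(g)-\xi(h)|^2\le A\alpha\,d_{cc}(g,h)^2$. For higher orders, smoothness of $F^{(c)}_\alpha$ is obtained only qualitatively, from hypoellipticity of $\mathcal{L}+\alpha$; the series converges uniformly, so the eigen-equation holds distributionally. The paper then cites Yaglom for mean-square differentiability. It controls increments of the derivative fields with the Carnot Taylor formula and $\nabla_{H}K(e)=0$.

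\textbf{Your route.} You replace all of this with one estimate. Because $|\mu|(2k+1)=\alpha$, the ladder-operator bound makes every left-invariant derivative of $\varphi^k_\alpha$ bounded uniformly in $k$ and $g$. This gives termwise differentiation of the series and globally bounded derivatives of all orders, with no appeal to hypoellipticity.

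This hinges on taking $\Phi_k$ to be the rescaled Hermite functions $|\mu|^{1/4}\Phi_k(|\mu|^{1/2}\,\cdot)$. That rescaling is also what makes \eqref{e.cov.alpha.k.H} true. With the unscaled $\Phi_k$ literally written in \eqref{eqn.e.function}, one has $\|\Phi_k'\|\sim\sqrt{k}$ and the uniform bound fails.

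Your last step also supplies something the paper omits. The paper only shows that each mean-square derivative field admits a continuous modification. Your integral identity along $s\mapsto g\exp(sX)$ shows that these continuous versions are the pathwise derivatives of a single continuous modification of $\xi$.

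One small point to make explicit: the quadratic bound $2\,\mathrm{Re}\bigl(K(e)-K(p)\bigr)\le C|p|^2$ requires $\nabla\,\mathrm{Re}K(e)=0$. This holds because $|K(p)|\le K(e)$, so $\mathrm{Re}K$ is maximized at $e$. In any case a linear bound would suffice, since Gaussian moments of every order are available for Kolmogorov--Chentsov.
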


\begin{proof}
    Let us fix $\varepsilon>0$. We first prove that there exists a modification of $\{ \xi (g) , \, g\in \Hei\}$ such that all its paths are H\"older continuous of all orders $0  < \beta < \frac{\varepsilon}{4+\varepsilon}$. By Kolmogorov-Chenstov continuity Theorem on metric measure spaces \cite[Theorem 1.1]{KratschmerUrusov23} applied to the Heisenberg group with the Carnot-Carath\'eodory distance, it is enough to prove that, for all $g,h\in \Hei$
\begin{align*}
    \E \left[ \vert \xi (g) - \xi(h)\vert^{4+\varepsilon} \right] \leq A \, d_{cc}(g,h)^{4+\varepsilon},
\end{align*}
for some constant $A$ independent of $g$ and $h$. We claim that there exists a positive constant $A$ such that, for all $g,h\in \Hei$
\begin{align}\label{eqn.claim.to.do}
    &  \E \left[ \vert \xi (g) - \xi(h)\vert^{2} \right] \leq A \; \alpha \; d_{cc} (g,h)^{2}.
\end{align}
Let us assume \eqref{eqn.claim.to.do} for now. The random variable $\xi(g)-\xi(h)$ is Gaussian with variance $\sigma_{g,h}^{2}:= \E \left[ \vert \xi (g) - \xi(h)\vert^{2} \right]$, so that 
\begin{align*}
    & \E \left[ \vert \xi (g) - \xi(h)\vert^{4+\varepsilon} \right] = \sigma_{g,h}^{4+\varepsilon} \,\Gamma\left( 1+ \frac{4+\varepsilon}{2} \right) = \sigma_{g,h}^{4+\varepsilon} \,\Gamma\left( 3+\frac{\varepsilon}{2} \right)
    \\
    & 
    = (\sigma_{g,h}^2)^{\frac{4+\varepsilon}{2}} \Gamma\left( 3+\frac{\varepsilon}{2} \right)=\E \left[ \vert \xi (g) - \xi(h)\vert^{2} \right]^{\frac{4+\varepsilon}{2}} \Gamma\left(3+\frac{\varepsilon}{2}\right) 
    \\&\leq  \Gamma\left(3+\frac{\varepsilon}{2}\right) \left( A \; \alpha \right)^{2+\frac{\varepsilon}{2} } d_{cc} (g,h)^{4+ \varepsilon},
\end{align*}
which proves that there exists a modification of $\{ \xi (g) , \, g\in \Hei\}$ with H\"older trajectories. Let us now prove \eqref{eqn.claim.to.do}. We have that 
\begin{align*}
    &\E[|\xi(g)-\xi(h)|^2]=2(F^{(c)}_\alpha(e)-F^{(c)}_\alpha(h^{-1}g))
    \\
    &=2c_{-1} \left( 1-J_{0} (\sqrt{\lambda} |z| ) \right) + \sum_{k=0}^{\infty}2c_{k} (\varphi^k_{\alpha}(e)-\varphi^k_{\alpha}(h^{-1}g)) ,
\end{align*}
where 
\begin{align*}
    & \vert \varphi^k_{\alpha}(e)-\varphi^k_{\alpha}(h^{-1}g) \vert =  \left| 1- \cos \left(\frac{\alpha}{2k+1} t \right)e^{-\frac{1}{4}\frac{\alpha}{2k+1}|z|^2} L_{k} \left( \frac{\alpha}{2(2k+1)}|z|^2) \right)  \right|
    \\
    & \leq \left| 1-  e^{-\frac{1}{4}\frac{\alpha}{2k+1}|z|^2} L_{k} \left( \frac{\alpha}{2(2k+1)}|z|^2) \right)  \right| 
    \\
    & + \left| \left(1- \cos \left(\frac{\alpha}{2k+1} t \right) \right) e^{-\frac{1}{4}\frac{\alpha}{2k+1}|z|^2} L_{k} \left( \frac{\alpha}{2(2k+1)}|z|^2) \right)  \right|
    \\
    & =: I+II,
\end{align*}
for $h^{-1}g=(z,t) \in \Hei$. Set $f_{k}(s) := e^{-\frac{s}{2}} L_{k}(s)$, for $s>0$. Then 
\begin{align*}
    & f_{k}^{\prime} (s) = - e^{-\frac{s}{2}} \left( \frac{1}{2} L_{k}(s) + L_{k-1}^{(1)} (s) \right),
    \\
    & \vert 1- f_{k}(s) \vert = \vert f_{k}(0) - f_{k}(s) \vert \leq |s| \sup_{0\leqslant u \leqslant s} \vert f_{k}^{\prime} (u) \vert  \leq  |s| \frac{2k+1}{2},
\end{align*}
where we used the bound 
\begin{align*}
    \vert e^{-\frac{s}{2}} L_{k}^{(n)}(s) \vert \leq  L_{k}^{(n)}(0) = \binom{k+n}{k},
\end{align*}
which holds uniformly in $n,k\in \mathbb{N}$, $s\in \R$, and hence 
\begin{align*}
    I \leq \frac{\alpha}{2(2k+1)}|z|^2 \frac{2k+1}{2} = \frac{\alpha}{4} |z|^2.
\end{align*}
On the other hand, $II$ can be easily bounded by
\begin{align*}
    &  II \leq \left| 1- \cos \left(\frac{\alpha}{2k+1} t \right)  \right| \leq \frac{\alpha}{2k+1} |t|\leq \alpha \, |t|,
\end{align*}
so that 
\begin{align*}
    & \vert \varphi^k_{\alpha}(e)-\varphi^k_{\alpha}(h^{-1}g) \vert \leq  \alpha \left( |z|^2+|t| \right) \leq c \, \alpha \, d_{cc}(g,h)^{2} .
\end{align*}
Moreover, 
\begin{align*}
    & \left| 1-J_0(\sqrt{\lambda}|z|)\right| \leq \frac{\lambda|z|^2}{2}\leq \lambda \, d_{cc}(g,h)^2
\end{align*}
for all $h^{-1}g=(z,t) \in \Hei$ when $|z|$ is small. Combining everything together, then  \eqref{eqn.claim.to.do} follows since $(c_k)_{k=-1}^{\infty} \in \mathcal{O}^{1}$. Note that \eqref{eqn.claim.to.do} can also be proven using  Taylor formula on Carnot groups.

We now prove that the derivatives of the field are continuous.  Note that the function $F^{(c)}_{\alpha}$ satisfies $\left( \mathcal{L} + \alpha \right) F^{(c)}_{\alpha} =0$ and is then in $C^\infty(\mathbb{H})$ since $\mathcal{L}$ is hypoelliptic thanks to H\"ormander's condition. In particular, for each $g\in\Hei$
\begin{align*}
    \E \left[ X_{g} \xi (g) \overline{X_{g} \xi (g)} \right] = X_{h} X_{g} F^{(c)}_{\alpha}(h^{-1} g)_{|_{g=h}} < \infty ,
\end{align*}
and hence the field $\{ \xi (g) , \, g\in \Hei\}$ is  mean-square differentiable and its derivative $\{ X_{g}\xi (g), \, g\in \Hei\}$ is yet another stationary Gaussian field with finite variance \cite[Ch. 1 par. 5]{Yaglom87}. Almost sure continuity of the function $g\rightarrow X_{g}\xi (g)$ then follows again by \cite[Theorem 1.1]{KratschmerUrusov23} applied to the derivative field $X_{g}\xi (g)$. This follows as in the case of $\xi( g)$ after we prove that 
\begin{align}\label{eqn.der.field.cov}
    &  \E \left[ \vert  X_{g}\xi (g) -  X_{h}\xi (h)\vert^{2} \right] \leq c_{\alpha} \; d_{cc} (g,h)^{2},
\end{align}
for some constant $c_{\alpha}>0$. Note that $X_{g}\xi (g)$ is a stationary Gaussian field with covariance function 
\begin{align*}
    & K_\eta(g,h) =  \E \left[ X_{g} \xi (g) \overline{X_{h} \xi (h)} \right] = X_{g} X_{h} F^{(c)}_{\alpha}(h^{-1} g),
\end{align*} 
which is a smooth function. Moreover, if $K(g):=K(g,e)$ is the covariance function of a stationary field on $\Hei$ then $\nabla_{H}K (e)=0$, and hence by the Taylor formula on Carnot groups, see e.g. \cite[Example 20.2.5]{BonfiglioliLanconelliUguzzoni2007} it follows that 
\begin{align*}
    & K(g)=K(e) + \langle \nabla_{\mathbb{H}} K (e), z \rangle + O(d_{cc}(g,e)^2)= K(e) + O(d_{cc}(g,e)^2) , & g=(z,t)\in\Hei,
\end{align*}
that is,
\begin{align*}
    \vert K(e) - K(g) \vert \leq c \, d_{cc}(g,e)^{2},
\end{align*}
for some constant $c>0$, and \eqref{eqn.der.field.cov} is proven. The argument for other derivatives is similar. 
\end{proof}

\begin{thm}\label{thm.stat.field.H}
    Let $\alpha>0$ be fixed. Then the following are equivalent.

    \begin{enumerate}
        \item  There exists a left-invariant random field $\xi : \Hei \rightarrow \mathbb{C}$ such that 

        \begin{enumerate}[label=(\roman*)]
        \item $\xi$ is a Gaussian field and the random variable $\xi(g)$ is $N_{\mathbb{C}}(0,2)$ for all $g\in\Hei$;
        \item the function $g\rightarrow \xi (g)$ is smooth a.s. and it is a generalized eigenfunction for the sub-Laplacian with eigenvalue $\alpha$;
        \item the law of $\xi$ is invariant under isometries that fix the identity.
    \end{enumerate}

        \item $\xi_{\alpha}$ is a complex-valued Gaussian field whose trajectories are smooth a.s. with mean zero and covariance function $2F_{\alpha}^{(c)}$ for some $c\in  \mathcal{O}^{1}$.

        \item The field $\xi$ can be represented as
\begin{equation}\label{representation.r.f.H}
    \xi (g) =  \sqrt{c_{-1}}\, b^{\C}_{\alpha} (z)+\sum_{k=0}^{\infty} \sqrt{c_{k}} \, \xi^{k}_{\alpha} (g) , \quad g=(z,t) \in \Hei,
\end{equation}
for some $c\in  \mathcal{O}^{1}$, where $\{ b^{\C}_{\alpha} (z) , z \in \R^{2} \}$ is a complex-valued Berry random field on $\R^{2}$ and 
\begin{align}\label{e.field.component}
    & \xi^{k}_{\alpha} (g) := \pi^{\frac{1}{4}} \sum_{\varepsilon = \pm 1} \sum_{j=0}^{\infty} a^{\alpha ,\varepsilon}_{kj} e^{\alpha, \varepsilon}_{kj} (g) ,
\end{align}
where $e^{\alpha, \varepsilon}_{kj}$ are the generalized $\mathcal{L}$-eigenfunctions given by \eqref{eqn.e.function} and $a^{\alpha, \varepsilon}_{kj}$ are complex-valued Gaussian random variables independent of $b^{\C}_{\alpha}$ and such that
\begin{align*}
    \E \left[ a^{\alpha, \varepsilon}_{kj} \overline{a^{\alpha, \varepsilon^{\prime}}_{k^{\prime}j^{\prime}}} \right] = \delta_{kk^{\prime}}\delta_{jj^{\prime}}\delta_{\varepsilon \varepsilon^{\prime}}.
\end{align*}
    \end{enumerate}
    \end{thm}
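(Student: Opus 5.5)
I will prove the cycle $(3)\Rightarrow(2)\Rightarrow(1)\Rightarrow(3)$.

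\emph{Step $(3)\Rightarrow(2)$.} For $(3)\Rightarrow(2)$ I compute covariances term by term, using independence of $b^{\C}_\alpha$ and the $a^{\alpha,\varepsilon}_{kj}$. The Berry part gives $2c_{-1}J_0(\sqrt\alpha|z-w|)$ by \eqref{eq.complex.cov.berry}. Each $k$-component gives $\sqrt\pi\sum_\varepsilon\sum_j e^{\alpha,\varepsilon}_{kj}(g)\overline{e^{\alpha,\varepsilon}_{kj}(h)}=\sqrt\pi\sum_\varepsilon e^{\alpha,\varepsilon}_{kk}(h^{-1}g)=2\varphi^k_\alpha(h^{-1}g)$ by \eqref{e.duplication} and \eqref{e.sum.eps}. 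The series in $j$ and in $k$ converge in $L^2(\Omega)$: by Proposition \ref{prop.properties.e.functions}, $\sum_j|e^{\alpha,\varepsilon}_{kj}(g)|^2=1$, and $\sum c_k=1$. Hence $\xi$ is a centered Gaussian field with covariance $2F^{(c)}_\alpha(h^{-1}g)$. This shows that $F^{(c)}_\alpha$ is positive definite, and left invariance follows because the covariance depends only on $h^{-1}g$. Smoothness then follows from Theorem \ref{thm.smooth.field}, which applies once the covariance is known.

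\emph{Step $(2)\Rightarrow(1)$.} For $(2)\Rightarrow(1)$, property (i) holds since $F^{(c)}_\alpha(e)=1$. Property (iii) holds because $F^{(c)}_\alpha$ is radial and even in $t$ (it depends on $|z|$ and $\cos t$), so it is invariant under rotations and the reflection $(z,t)\mapsto(\bar z,-t)$. For the eigenfunction property (ii), I show $\E|(\mathcal L+\alpha)\xi(g)|^2=0$. Expanding this with the derivative covariances of Theorem \ref{thm.smooth.field} gives $(\mathcal L_g+\alpha)(\mathcal L_h+\alpha)$ applied to $2F^{(c)}_\alpha(h^{-1}g)$, evaluated at $g=h$. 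By left invariance and the fact that $\mathcal L$ acting in $h$ on $F(h^{-1}g)$ is again $\mathcal L F$ (the sub-Laplacian is invariant under inversion, $X\mapsto -X$ up to the symmetric form), this vanishes since $(\mathcal L+\alpha)F^{(c)}_\alpha=0$. A countable dense set together with continuity then gives the identity a.s. for all $g$.

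\emph{Step $(1)\Rightarrow(3)$, the main obstacle.} The covariance $B(h^{-1}g)=\E[\xi(g)\overline{\xi(h)}]$ is positive definite, so Theorem \ref{thm.Bochner} together with the Stone--von Neumann theorem writes $B$ as a sum of two pieces. The first is an integral over characters $\beta\in\R^2$ of $e^{2\pi i\beta\cdot z}\,d\nu(\beta)$. The second is an integral over $\lambda\in\R^*$ of $\operatorname{Tr}(\pi_\lambda(g)F(d\lambda))$ with $F$ trace-class valued. Applying $\mathcal L+\alpha$, which holds in mean square by (ii) and hence for $B$, localizes the spectral data. The characters have $\mathcal L$-eigenvalue $-4\pi^2|\beta|^2$, so $\nu$ is supported on the circle $4\pi^2|\beta|^2=\alpha$. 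On the Schr\"odinger side, $d\pi_\lambda(\mathcal L)$ is $-|\lambda|$ times the Hermite operator, with eigenvalues $-|\lambda|(2k+1)$ on $\Phi_k$. So $F$ is supported on the points $\lambda=\pm\alpha/(2k+1)$ and on the spectral projections onto $\Phi_k$ for the corresponding $k$. Radiality (iii) does the rest. Invariance under rotations makes $\nu$ uniform on the circle, which yields $c_{-1}J_0$. It also makes $F$ commute with the metaplectic action of $U(1)$, whose eigenspaces are the $\Phi_k$; hence $F$ is diagonal there, giving $\langle\pi(g)\Phi_k,\Phi_k\rangle$ terms. The reflection forces equal weights for $\varepsilon=\pm1$, yielding $\varphi^k_\alpha$. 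The variance normalization $B(e)=2$ gives $\sum c_k=1$. This identifies the covariance as $2F^{(c)}_\alpha$, and because Gaussian laws are determined by their covariance, $\xi$ has the law of the field constructed in (3).

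The delicate points all sit in this last step. They are the justification of applying $\mathcal L$ inside the spectral integral, which I would do by testing against $C_c^\infty$ functions and using the group Fourier transform, and the identification of the commutant of the $U(1)$ action on each Schr\"odinger representation.
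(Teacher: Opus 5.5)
Your proposal is correct in outline. It runs the cycle in the opposite direction from the paper and handles the hard implication by a different mechanism.

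\textbf{The paper's route.} The paper proves $(1)\Rightarrow(2)$ analytically. It Fourier-transforms the radial covariance $C(r,t)$ in the central variable and solves a Bessel equation at $\mu=0$. For $\mu\neq0$ it solves a Laguerre-type equation whose admissible (bounded) solutions force $|\mu|=\alpha/(2k+1)$. It obtains $c_k\ge0$ separately, by restricting $C$ to the centre and applying Bochner on $\R$. It then gets $(2)\Rightarrow(3)$ by \emph{choosing} the measures $F_{jk}$, $F_{-1}$ in Theorem~\ref{thm.stationarygroup} so as to reproduce $2F^{(c)}_\alpha$.

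\textbf{Your route and what it buys.} You derive those measures instead. $\mathcal{L}$ acts by $-4\pi^2|\beta|^2$ on characters and, in $\pi_\lambda$, by $-|\lambda|$ times a rescaled Hermite operator. Hence $(\mathcal{L}+\alpha)B=0$ localizes $\nu$ on the circle of radius $\sqrt{\alpha}/(2\pi)$ and $F$ on $\lambda=\pm\alpha/(2k+1)$. This gives $c_k\ge0$ for free from positivity of $\nu$ and $F$. It also makes the spectral measure in $(2)\Rightarrow(3)$ a consequence rather than an ansatz. Your use of the reflection to equalize the $\varepsilon=\pm1$ weights addresses a point the paper's ODE argument tacitly assumes when it puts a single $c_k$ in front of $\delta(|\mu|-\alpha/(2k+1))$. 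That step is genuinely needed: a field built from $\varepsilon=+1$ alone satisfies (i), (ii) and $U(1)$-invariance.

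\textbf{What each route costs.} Yours needs the distributional justification you flag, including separating the Plancherel-null character part from the Schr\"odinger part. The paper's route needs similar care, since the $t$-Fourier transform of a bounded, non-integrable $C$ is only a distribution, but it stays within classical ODE theory. The paper also yields (3) as an $L^2$-representation of the given field through its random spectral measure. You get (3) as an identity in law, which suffices for the equivalence.

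\textbf{Two simplifications available to you.} First, the metaplectic commutant is unnecessary. Each eigenvalue $-|\lambda|(2k+1)$ of $d\pi_\lambda(\mathcal{L})$ is simple, so a positive operator annihilated by $d\pi_\lambda(\mathcal{L})+\alpha$ is already a multiple of the rank-one projection onto the corresponding Hermite function. Radiality is then only needed to make $\nu$ uniform. Second, in $(2)\Rightarrow(1)$ no inversion symmetry is needed, since $(\mathcal{L}_g+\alpha)\,2F^{(c)}_\alpha(h^{-1}g)=2\bigl((\mathcal{L}+\alpha)F^{(c)}_\alpha\bigr)(h^{-1}g)=0$ by left invariance alone. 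Your mean-square proof of (ii) also avoids the paper's step from uniform $L^2(\Omega)$ convergence of the truncations to almost sure $C^\infty$ convergence.
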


\begin{proof}
    $(1) \implies (2)$ Note that by left invariance of the field the covariance function satisfies 
    \begin{align*}
        & C(g,h) := \E \left[ \xi_{\alpha} (g) \overline{\xi_{\alpha}(h)} \right]=  \E \left[ \xi_{\alpha} (h^{-1}g) \overline{\xi_{\alpha}(e)} \right] = C \left( h^{-1}g , e \right),
    \end{align*}
    so it is enough to consider the function 
    \begin{align*}
        & C(g) := \E \left[ \xi_{\alpha} (g) \overline{\xi_{\alpha}(e)} \right].
    \end{align*}
    By assumptions on $\xi_{\alpha}$ it then follows that $C$ is a smooth function on $\Hei$ satisfying 
    \begin{align*}
        & - \mathcal{L} C(g) = \alpha C(g),
    \end{align*}
    and because the field is invariant by the action of $U(1)$, we have that $C(g) = C(r,t)$ for $g=(z,t)\in \Hei$ and $r:= |z|$.  Note that the sub-Laplacian in spherical coordinates $(r,\theta, t)$ is given by 
    \begin{align*}
        & - \mathcal{L} = - \mathcal{L}^{rad} - \frac{1}{r^{2}} \partial^{2}_{\theta} + \partial^{2}_{\theta t}, & \mathcal{L}^{rad} = \partial^{2}_{r} + \frac{1}{r} \partial_{r} - \frac{1}{4} r^{2} \partial^{2}_{t},
    \end{align*}
    so that 
    \begin{align*}
        & - \mathcal{L}^{rad} C (r,t) = \alpha C(r,t).
    \end{align*}
    Let us consider the Fourier transform in the $t$ component
    \begin{align*}
        \hat{C} (r, \mu) := \int_{\R} C(r,t) e^{- i\mu t} dt, 
    \end{align*}
    and rewrite the radial sub-Laplacian as 
    \begin{align*}
        & \mathcal{L}^{rad} = \partial^{2}_{r} + \frac{1}{r} \partial_{r} - \frac{1}{4} r^{2} \mu^{2}.
    \end{align*}
    For $\mu \in \R$ and $r\geq 0$, the function  $g_{\mu} (r) :=  \hat{C} (r, \mu)$  satisfies the ordinary differential equation
    \begin{align}\label{eqn.dif.eq.1}
        & g^{\prime \prime}_{\mu}(r) + \frac{1}{r} g^{\prime}_{\mu} (r) + \left( \alpha - \frac{1}{4} r^{2} \mu^{2}\right) g_{\mu} (r) =0.
    \end{align}
    Note that if  $\mu =0$, we recover the Bessel equation 
    \begin{align*}
        & r^{2} g^{\prime \prime}_{0}(r) + r g^{\prime}_{0} (r) +  \alpha r^{2}  g_{0} (r) =0,
    \end{align*}
    whose solution satisfying $g_{0}(0)=2$ is given by the Bessel function of first kind $g_{0}(r)= 2  J_{0}(\sqrt{\alpha} r)$. The boundary condition follows from $\xi_{\alpha} (g) \sim N_{\C} (0,2)$. For $\mu \neq 0$ let us consider the following change of variable 
    \begin{align*}
       & v(r) := e^{\frac{r}{2}} g_{\mu} \left( \sqrt{\frac{2}{|\mu|} r} \right), & g_{\mu}(r)= e^{- \frac{1}{4} |\mu| r^{2}} v\left( \frac{1}{2}|\mu| r^{2} \right),
    \end{align*}
    which then solves 
    \begin{align}\label{eqn.eq.diff.Lag}
        & \frac{1}{2} |\mu| r^{2} v^{\prime \prime} \left( \frac{1}{2}|\mu| r^{2} \right)+ \left(1 -\frac{1}{2}|\mu| r^{2}  \right)v^{\prime } \left( \frac{1}{2}|\mu| r^{2} \right) + \frac{\alpha-|\mu|}{2|\mu|}  v \left( \frac{1}{2}|\mu| r^{2} \right) =0.
    \end{align}
    Note that \eqref{eqn.eq.diff.Lag} is a Laguerre type differential equation and it admits bounded solutions if and only if 
    \begin{align*}
        & k:= \frac{\alpha-|\mu|}{2| \mu|}   \in \mathbb{N}_{0},
    \end{align*}
    that is, if and only if $| \mu| = \frac{\alpha}{2k+1}$, and in that case the solution is the Laguerre polynomial of order $k$. We then have that  
    \begin{align*}
       & g_{\mu}(r)= 2 J_{0}(\sqrt{\alpha} r), & \text{if } \mu=0,
       \\
       &g_{\mu} (r) = e^{- \frac{1}{4}\frac{\alpha}{2k+1} r^{2}} L_{k}\left( \frac{1}{2}\frac{\alpha}{2k+1} r^{2} \right), & \text{if } \vert \mu \vert =\frac{\alpha}{2k+1} ,
       \\
       & g_{\mu} (r) =0, & \text{otherwise,}
    \end{align*}
    and hence 
    \begin{align*}
        \hat{C}(r,\mu) =2c_{-1}  J_{0}(\sqrt{\alpha} r)\delta\left(  \mu   \right)+  \sum_{k=0}^{\infty} c_{k} e^{- \frac{1}{4}\frac{\alpha}{2k+1} r^{2}} L_{k}\left( \frac{1}{2}\frac{\alpha}{2k+1} r^{2} \right)\delta\left( \vert \mu \vert - \frac{\alpha}{2k+1}  \right),
    \end{align*}
    for some $c_{k} \in \R$. By using Fourier inversion formula in the $t$ direction we have that 
    \begin{align*}
        &C(g)=C(r,t)=\int_{\R} \hat{C}(r,\mu) e^{i\mu t} d\mu 
        \\
        &=2c_{-1}  J_{0}(\sqrt{\alpha} r)+ \sum_{k=0}^{\infty} c_{k} e^{- \frac{1}{4}\frac{\alpha}{2k+1} r^{2}} L_{k}\left( \frac{1}{2}\frac{\alpha}{2k+1} r^{2} \right) 2 \cos \left(\frac{\alpha}{2k+1} t \right) =2F^{(c)}_{\alpha} (g).
    \end{align*}
    Note that $\sum_{k=-1}^{\infty} c_{k} =1$ because $\xi_{\alpha} (g) \sim N_{\C} (0,2)$. Let us now prove that $c_{k} \geq 0$ for all $k\in \mathbb{N}$. Let us recall that $C: \Hei \rightarrow \R$ is non-negative definite if and only if 
    \[
    \sum_{i,j=1}^{n} a_{i} \overline{a}_{j} C( g_{j}^{-1} g_{i} ) \geq 0
    \]
    for any $n\in \mathbb{N}$, $a_{1}, \ldots , a_{n} \in \mathbb{C}$, and any $g_{a}, \ldots , g_{n} \in \Hei$. By choosing points of the form $g_{i} = (0, t_{i})$ it then follows that the function 
    \begin{align*}
      &  f_{\alpha}: \R \longrightarrow \R , &  f_{\alpha} (t) := 2 \sum_{k=0}^{\infty} c_{k} \cos \left(\frac{\alpha}{2k+1} t \right),
    \end{align*}
    is non-negative definite. By Bochner theorem, its Fourier transform 
    \begin{align*}
      & \hat{f}_{\alpha} (\mu) = 2 \pi \sum_{k=0}^{\infty} c_{k} \delta\left( \vert \mu \vert - \frac{\alpha}{2k+1}  \right)
    \end{align*}
    is then a measure on $\R$. If there exists a $k_{0}$ such that $c_{k_{0}} <0$, then it would follow that 
    \begin{align*}
        &  \hat{f}_{\alpha} \left( \left\{ \frac{\alpha}{2k_{0}+1} \right\} \right) = 2\pi c_{k_{0}} <0 ,
    \end{align*}
    which contradicts $\hat{f}_{\alpha}$ being a measure. One can prove that $c_{-1}>0$ in a similar way, using that $J_{0}$ is a covariance function.

     $(2) \implies (3)$  By the Stone-von Neumann Theorem \ref{thm.stone.neuman} we know that the unitary dual is given by $\widehat{\Hei}=\R^{2} \,  \dot{\cup} \, \R^{\ast}$, where for $\beta \in \R^{2}$ $\pi_{\beta}$'s are the characters acting on $\C$, and  for $\lambda \neq 0$, $\pi_{\lambda}$'s are the  Schr\"odinger representations acting on  the same representation space $H_{\pi} = L^{2} (\R,d\xi)$, where $d\xi$ denotes the Lebesgue measure. By Theorem \ref{thm.stationarygroup} and \eqref{eqn.cov.components},  we can write the covariance function of $\xi_{\alpha}$ as
    \begin{align}\label{Bg}
        & B(g) =\int_{\R^{2}} \pi_{\beta} (g)F_{-1}(d\beta)+ \int_{\R} \sum_{k,j=0}^{\infty} \left\langle  \pi_{\lambda} (g) \Phi_{k}, \Phi_{j} \right\rangle  F_{jk}(d\lambda),
    \end{align}
    where $\{\Phi_{k}, \, k=0,1,\ldots\}$ is any orthonormal basis for $L^{2} (\R,d\xi)$, and $\{F_{jk}, \, j,k=0,1,\ldots\}$ are measures on $\R$ satisfying 
    \begin{align*}
        & \sum_{k=0}^{\infty} F_{kk} (\R) < \infty , & F_{kj} = F_{jk},
    \end{align*}
    and $F_{-1}$ is a finite measure on $\R^{2}$. For a fixed $\alpha>0$ and $c\in \mathcal{O}^{1}$, let us consider the following family of measures 
    \begin{align}\label{e.choice.measure}
        & F_{jk} (d\lambda):=\sqrt{\pi}c_{k} \delta_{jk} \sum_{\varepsilon = \pm 1}  \delta_{\varepsilon\frac{\alpha}{2k+1}} \left( \lambda  \right), 
        \\
        & F_{-1} (d\beta) := 2c_{-1} d\sigma^{\alpha} (\beta),
    \end{align}
    where $\delta_{\varepsilon\frac{\alpha}{2k+1}} $ denotes the Dirac delta function centered at $\varepsilon\frac{\alpha}{2k+1}$, and $\sigma^{\alpha}$ is the uniform measure on the circle of radius $\frac{\sqrt{\alpha}}{2\pi}$. Note that 
    \begin{align*}
      &  F_{kk} ( \R) =2\sqrt{\pi} c_{k} ,& F_{-1} (\R^{2}) = 2c_{-1},
    \end{align*}
and hence condition \eqref{e.condition.pos.def} is satisfied since $c\in \mathcal{O}^{1}$. We now show that the covariance function corresponding to this choice of $F_{jk}$ and $F_{-1}$ coincides with $F^{(c)}_{\alpha}$, and that the corresponding field $\xi_{\alpha}$ admits the representation \eqref{representation.r.f.H}. The covariance function \eqref{Bg} can  be written as 
\begin{align*}
    & B(g) =\int_{\R^{2}} \pi_{\beta} (g)F_{-1}(d\beta) +  \int_{\R} \sum_{k,j=0}^{\infty} \langle \pi_{\lambda} (g) \Phi_{k}, \Phi_{j} \rangle dF_{jk} (\lambda) = 
    \\
    & =2c_{-1}\int_{\R^{2}}e^{2\pi i \, \beta \cdot z}d\sigma^{\alpha} (\beta) + \sqrt{\pi}\int_{\R} \sum_{k,j=0}^{\infty} \langle \pi_{\lambda} (g) \Phi_{k}, \Phi_{j} \rangle c_{k} \delta_{jk} \sum_{\varepsilon = \pm 1}  \delta_{\varepsilon\frac{\alpha}{2k+1}} \left( \lambda  \right)
    \\
    & = 2c_{-1}J_{0} ( \sqrt{\alpha } \, \vert z \vert) + \sqrt{\pi}\sum_{k=0}^{\infty}c_{k} \sum_{\varepsilon = \pm 1} \int_{\R}  \langle \pi_{\lambda} (g) \Phi_{k}, \Phi_{k} \rangle \delta_{\varepsilon\frac{\alpha}{2k+1}} \left( \lambda  \right) 
    \\
    &= 2 c_{-1}J_{0} ( \sqrt{\alpha } \, \vert z \vert) +\sqrt{\pi} \sum_{k=0}^{\infty}c_{k} \sum_{\varepsilon = \pm 1}  \langle \pi_{\varepsilon \frac{\alpha}{2k+1}} (g) \Phi_{k}, \Phi_{k} \rangle
    \\
    & =2 c_{-1}J_{0} ( \sqrt{\alpha } \, \vert z \vert)+  \sum_{k=0}^{\infty}c_{k} \sqrt{\pi}\sum_{\varepsilon = \pm 1} e^{\alpha, \varepsilon}_{kk}(g) 
    \\
    &= 2c_{-1}J_{0} ( \sqrt{\alpha } \, \vert z \vert)+ 2 \sum_{k=0}^{\infty} c_{k} \varphi_{\alpha}^{k} (g) =:2 F_{\alpha}^{(c)} (g),
\end{align*}
where we used the following representation of the Bessel function 
\begin{align*}
    &\int_{\R^{2}}e^{2\pi i \, \beta \cdot z}d\sigma^{\alpha} (\beta) = J_{0} ( \sqrt{\alpha } \, \vert z \vert).
\end{align*}
Let us now represent the corresponding field $\xi_{\alpha}$.   By \eqref{e.stationary.r.f.group} we have that 
\begin{align*}
   & \xi_{\alpha} (g) = \int_{\R^{2}} \pi_{\beta} (g)dZ_{-1}(\beta) + \int_{\R} \sum_{k,j=0}^{\infty}\left\langle \pi_{\lambda} (g) \Phi_{k}, \Phi_{j} \right\rangle dZ_{jk} (\lambda)
\end{align*}
    where \{$Z_{jk}, j,=0,1,\ldots\}$ is a family of random measures on $\R^{\ast}$ satisfying 
\begin{align*}
    \E \left[Z_{ki} (A) \overline{Z_{jl} (B)} \right] = \delta_{il} F_{kj}(A\cap B) = \sqrt{\pi} \delta_{il} \delta_{jk}  c_{j} \sum_{\varepsilon = \pm 1}  \delta_{\varepsilon \frac{\alpha}{2k+1}} (A\cap B),
\end{align*}
for any measurable sets $A, B \subset \R^{\ast}$, and $Z_{-1}$ is a random measure on $\R^{2}$ satisfying 
\begin{align*}
    \E \left[Z_{-1} (A) \overline{Z_{-1} (B)} \right] =  F_{-1}(A\cap B) = 2c_{-1} \sigma^{\alpha}  (A\cap B),
\end{align*}
for any measurable sets $A, B \subset \R^{2}$, where $\sigma^{\alpha}$ is the uniform measure on the circle of radius $\frac{\sqrt{\alpha}}{2\pi}$. Note that 
\begin{align*}
    & \int_{\R^{2}} \pi_{\beta} (g)dZ_{-1}(\beta)  = \int_{\R^{2}} e^{2\pi i \, \beta \cdot z} dZ_{-1}(\beta)
\end{align*}
is a Gaussian field on $\R^{2}$ with covariance function given by $2c_{-1}J_{0} ( \sqrt{\alpha } \, \vert \cdot \vert)$ and hence 
\begin{align*}
    & \int_{\R^{2}} \pi_{\beta} (g)dZ_{-1}(\beta)  \stackrel{(d)}{=} \sqrt{c_{-1}} \, b^{\C}_{\alpha} (z), & g=(z,t) \in \Hei.
\end{align*}
Note that each $Z_{jk}$ is a Gaussian random measure since $\xi_{\alpha}$ is a Gaussian field. In particular, if $A$ does not contain $\frac{\alpha}{2k+1}$ nor $-\frac{\alpha}{2k+1}$, then 
\begin{align*}
    &   \E \left[ \vert Z_{kj} (A) \vert^{2} \right]= F_{kk}(A) = \sqrt{\pi}  c_{k} \sum_{\varepsilon = \pm 1} \delta_{\varepsilon \frac{\alpha}{2k+1}} (A) =0,
\end{align*}
and hence $\vert Z_{kj} (A) \vert=0$ a.s. so that the random measure $Z_{jk}$ is supported on the set $\left\{\frac{\alpha}{2k+1} , \, - \frac{\alpha}{2k+1} \right\}$, and the field can then be written as 
\begin{align*}
   & \xi_{\alpha} (g) =\sqrt{c_{-1}} \,b^{\C}_{\alpha} (z)+ \int_{\R} \sum_{k,j=0}^{\infty}\left\langle \pi_{\lambda} (g) \Phi_{k}, \Phi_{j} \right\rangle dZ_{jk} (\lambda)
   \\
   & = \sqrt{c_{-1}} \, b^{\C}_{\alpha} (z)+ \sum_{k,j=0}^{\infty} \sum_{\varepsilon = \pm 1} \left\langle \pi_{\varepsilon \frac{\alpha}{2k+1}} (g) \Phi_{k}, \Phi_{j} \right\rangle Z_{jk} \left( \left\{ \varepsilon \frac{\alpha}{2k+1} \right\} \right) 
   \\
   &= \sqrt{c_{-1}} \, b^{\C}_{\alpha} (z)+  \sum_{k,j=0}^{\infty}  \sum_{\varepsilon = \pm 1} W^{\alpha, \varepsilon}_{kj} e^{\alpha , \varepsilon}_{kj} (g),
\end{align*}
where 
\begin{align}\label{eq:W}
     & W^{\alpha, \varepsilon}_{kj}:= Z_{kj} \left( \left\{\varepsilon \frac{ \alpha}{2k+1} \right\} \right).
\end{align}
Note that $\{ W^{\alpha, \varepsilon}_{kj} , \; k,j \in \mathbb{N}_{0}, \, \varepsilon = \pm 1 \}$ are Gaussian random variables since $\xi_{\alpha}$ is a Gaussian field, and 
\begin{align*}
    \E \left[  W^{\alpha, \varepsilon}_{kj} \overline{W^{\alpha, \varepsilon'}_{k'j'} }   \right] = \sqrt{\pi}  c_{k} \delta_{k,k'}\delta_{j,j'}\delta_{\varepsilon,\varepsilon'}.
\end{align*}
Let us write 
\begin{align*}
    & W^{\alpha, \varepsilon}_{kj} = \sqrt{\sqrt{\pi}\,c_{k} }\, a^{\alpha, \varepsilon}_{kj},
\end{align*}
so that the random variables $\{a^{\alpha, \varepsilon}_{kj}, \, \varepsilon=\pm1,\,  k,j=0,1,\ldots  \}$ satisfy
\begin{align*}
    \E \left[ a^{\alpha, \varepsilon}_{kj} \overline{a^{\alpha, \varepsilon^{\prime}}_{k^{\prime}j^{\prime}}}\right]= \delta_{kk^{\prime}}\delta_{jj^{\prime}}\delta_{\varepsilon \varepsilon ^{\prime}},
\end{align*}
and hence 
\begin{align*}
    \xi_{\alpha} (g) = \sqrt{c_{-1}}\, b^{\C}_{\alpha} (z)+ \sum_{k,j=0}^{\infty} \sum_{\varepsilon = \pm 1}  \sqrt{\sqrt{\pi}\,c_{k} } a^{\alpha ,\varepsilon}_{kj} e^{\alpha, \varepsilon}_{kj} (g)=  \sqrt{c_{-1}}\, b^{\C}_{\alpha} (z)+\sum_{k=0}^{\infty} \sqrt{c_{k}} \, \xi^{k}_{\alpha} (g) ,
\end{align*}
where $\xi^{k}_{\alpha} (g)$ is given by \eqref{e.field.component}.

     $(3) \implies (1)$ Let $\xi_{\alpha}$ be the field defined by \eqref{representation.r.f.H}. Then, for $g=(z,t), \, h=(w,s) \in \Hei$
     \begin{align*}
    &  \E \left[ \xi_{\alpha} (g) \overline{\xi_{\alpha}(h)} \right] = c_{-1}\E \left[ b^{\C}_{\alpha} (z) \overline{b^{\C}_{\alpha}(w)} \right]+   \sum_{k,j=0}^{\infty} \sum_{\varepsilon= \pm 1}  \sqrt{\pi} c_{k}  e^{\alpha, \varepsilon}_{kj}  (g)\overline{e^{\alpha, \varepsilon}_{kj}  (h)} 
    \\
    &=2 c_{-1} J_{0} \left( \sqrt{\alpha} \vert z-w\vert \right) +\sum_{k=0}^{\infty} \sum_{\varepsilon= \pm 1}  \sqrt{\pi} c_{k} e^{\alpha, \varepsilon}_{kk}  (h^{-1}g) 
    \\
    &= 2 c_{-1}J_{0} \left( \sqrt{\alpha} \vert z-w\vert \right) + 2\sum_{k=0}^{\infty} c_{k} \varphi_{\alpha}^{k} (h^{-1} g) =: 2F^{(c)}_{\alpha}(h^{-1} g).
\end{align*}
         Evaluating the covariance at the identity $g=h=e$, and recalling that $J_0(0)=1$ and $\varphi_\alpha^k(e)=1$, we obtain $\E[|\xi_\alpha(e)|^2] = 2\sum_{k=-1}^\infty c_k = 2$ since $c \in \mathcal{O}^1$. The field is Gaussian since it is given as a series of i.i.d Gaussian random variables,
         this proves that $\xi_\alpha(g) \sim N_{\mathbb{C}}(0,2)$ for all $g \in \mathbb{H}$, fulfilling condition (i).
     Concerning (ii), the trajectories of $\xi_{\alpha}$ are smooth  a.s. by Theorem \ref{thm.smooth.field}. Let us now prove that they are generalized eigenfunctions. For a fixed $n\in \mathbb{N}$ we consider the approximated field 
     \begin{align*}
         \xi_{\alpha,n} (g) :=  b^{\C}_{\alpha} (z)+ \sum_{k,j=0}^{n} \sum_{\varepsilon= \pm 1} e^{\alpha, \varepsilon}_{kj}  (g)W^{\alpha, \varepsilon}_{kj},
     \end{align*}
     with $W^{\alpha, \varepsilon}_{kj}$ defined as in \eqref{eq:W},
     which satisfies 
     \begin{equation}\label{eqn.e.fun.approx}
         -\mathcal{L}   \xi_{\alpha,n} (g) = \alpha   \xi_{\alpha,n} (g),
     \end{equation}
     for all $n\in \mathbb{N}$ and $g\in \Hei$. Note that, for $n,m\in \mathbb{N}$
     \begin{align*}
         &\sup_{g\in\Hei} \E\left[ \left|  \xi_{\alpha,n+m} (g)-  \xi_{\alpha,m,} (g) \right|^{2} \right] \leq \sup_{g\in\Hei}\sum_{k,j=m}^{n+m} c_{k}\sum_{\varepsilon=\pm 1} \left|e^{\alpha, \varepsilon}_{kj}  (g)\right|^{2} \leq  \sup_{g\in\Hei} \sum_{k=m}^{n+m} c_{k}\sum_{\varepsilon=\pm 1}  \sum_{j=0}^{\infty}  \left|e^{\alpha, \varepsilon}_{kj}  (g)\right|^{2}
         \\
         & = \sup_{g\in\Hei} \sum_{k=m}^{n+m} c_{k}\sum_{\varepsilon=\pm 1}  \sum_{j=0}^{\infty}  \left| \left\langle \pi_{ \varepsilon\frac{\alpha}{2k+1}} (g) \Phi_{k} , \Phi_{j} \right\rangle \right|^{2} = 2  \sum_{k=m}^{n+m} c_{k} \rightarrow 0 \text{ as } m\rightarrow \infty,
     \end{align*}
     since $c\in \mathcal{O}^{1}$, proving that the series  $\xi_{\alpha ,n}$ converges uniformly to $\xi_{\alpha}$, where in the last equality we used Proposition \ref{prop.properties.e.functions}.
This proves that $\xi_{\alpha,n}$ converges to $\xi_\alpha$ uniformly in $L^2(\Omega)$. Since Theorem \ref{thm.smooth.field} guarantees that the sample paths of $\xi_\alpha$ are smooth a.s., the convergence holds also in the topology of $C^\infty(\mathbb{H})$ on compact sets almost surely. Therefore, one can take  the limit in  \eqref{eqn.e.fun.approx}  as $n \to \infty$, proving that $-\mathcal{L}\xi_\alpha = \alpha \xi_\alpha$ a.s. and hence (ii) follows. Lastly, to prove (iii) note that the covariance function $C(g,h)=2F^{(c)}_{\alpha}(h^{-1} g)$ only depends on  $h^{-1}g$, proving that the field is left-invariant. Moreover, $F^{(c)}_{\alpha} (\cdot)$ is a radial function  and hence the field is invariant under the action of $U(1)$.  

\end{proof}

\appendix 

\section{}\label{appendix.A}
The goal of this section is to prove Theorem \ref{thm.Bochner} and characterize positive-definite functions on locally compact groups of type I. Our proof is based on direct integrals of unitary irreducible representations and on a characterization first proven by Gelfand, Raikov, and Naimark \cite{Naimark59, GelfandRaikov43}.

\subsection{Direct integrals of representations}

 Let us start by recalling the direct integral decomposition of Hilbert spaces \cite[Ch. 7 Sec. 4]{Folland16}. Let $(B, \mathcal{B}, \nu)$ be a measure space and $\{H_{\beta} \}_{\beta \in B}$ be a collection of Hilbert spaces. A field on $B$ is a map $f : B \rightarrow \Pi_{\beta\in B} H_{\beta}$ such that $f(\beta) \in H_{\beta}$ for each $\beta$. A measurable field of Hilbert spaces over $B$ is a collection of Hilbert spaces $\{H_{\beta} \}_{\beta \in B}$ and fields $\{e_{j} \}_{j=1}^{\infty}$ such that 
\begin{align*}
    & (i)\; \; \beta \rightarrow \langle e_{j} (\beta), e_{k} (\beta) \rangle_{H_{\beta}} \text{ is measurable for all } j,k ,
    \\
    & (ii)\; \text{the linear span of } \{e_{j} (\beta) \}_{j=1}^{\infty} \text{ is dense in }H_{\beta} \text{ for each } \beta.
\end{align*}
The direct integral of the spaces $H_{\beta}$ with respect to $\nu$, denoted by 
\begin{align*}
    &\int_{B}^{\oplus} H_{\beta} d\nu (\beta),
\end{align*}
is the space of fields $f : B \rightarrow \Pi_{\beta\in B} H_{\beta}$ such that 
\begin{align*}
    & \Vert f \Vert^{2} := \int_{B} \Vert f(\beta) \Vert^{2}_{\beta} d\nu (\beta)< \infty,
\end{align*}
where $\Vert \cdot \Vert_{\beta}$ is the norm induced by the inner product on $H_{\beta}$. It can be proven that the direct integral is a Hilbert space with respect to the inner product 
\begin{align*}
    & \langle f , g\rangle:= \int_{B} \langle f(\beta) , g(\beta) \rangle_{H_{\beta}} d\nu (\beta).
\end{align*}
A measurable field of operators is a map 
\begin{align*}
    T : B \rightarrow  \Pi_{\beta\in B}\mathcal{L} (H_{\beta}),
\end{align*}
where $\mathcal{L} (H_{\beta})$ denotes the space of linear operators on $H_{\beta}$, such that 
\begin{align*}
    \beta \rightarrow T(\beta) f(\beta) \in H_{\beta}
\end{align*}
is measurable for each $\beta$. A measurable field of operators defines an operator $\int_{B}^{\oplus} T(\beta) d\nu (\beta)$ acting on $\int_{B}^{\oplus} H_{\beta} d\nu (\beta)$. Let $B_{1} \in \mathcal{B}$ be a measurable set. Then the operator 
\begin{align*}
    & \int_{B_{1}}^{\oplus} T(\beta) d\nu (\beta) \text{ acts on }  \int_{B_{1}}^{\oplus} H_{\beta} d\nu (\beta),
\end{align*}
which is a Hilbert subspace of $\int_{B}^{\oplus} H_{\beta} d\nu (\beta)$. We can then extend $\int_{B_{1}}^{\oplus} T(\beta) d\nu (\beta)$ by letting 
\begin{align*}
    & \int_{B_{1}}^{\oplus} T(\beta) d\nu (\beta)v := 0 , & v \in \left(  \int_{B_{1}}^{\oplus} T(\beta) d\nu (\beta) \right)^{\perp},
\end{align*}
where the latter denotes the orthogonal complement of $\int_{B_{1}}^{\oplus} T(\beta) d\nu (\beta)$ in $\int_{B}^{\oplus} T(\beta) d\nu (\beta)$. Note that for any two measurable sets $B_{1}, B_{2}$ one has that 
\begin{align}\label{eq.intersection}
    & \int_{B_{1}}^{\oplus} T(\beta) d\nu (\beta) \int_{B_{2}}^{\oplus} T(\beta) d\nu (\beta) = \int_{B_{1}\cap B_{2}}^{\oplus} T(\beta) d\nu (\beta),
\end{align}
and 
\begin{align*}
    & \int_{B_{1}}^{\oplus} T(\beta) d\nu (\beta) \int_{B_{2}}^{\oplus} T(\beta) d\nu (\beta) =0,
\end{align*}
whenever $B_{1} \cap B_{2} = \emptyset$.

Now, let $\pi$ be a unitary representation of a  separable, locally compact group of type I $G$
 on a Hilbert space $H_{\pi}$. By \cite[p. 159] {Mackey57} and \cite{Yaglom60, Guichardet60}, it follows that  $\pi$ can be decomposed as 
\begin{align*}
    & \pi = \bigoplus_{\ell}\pi^{\ell} ,  & H_{\pi} =  \bigoplus_{\ell}H^{\ell},
\end{align*}
where each $\pi^{\ell}$ is a multiplicity free representation acting on the Hilbert space $H^{\ell}$. Moreover, each $H^{\ell}$ can be represented as a direct integral of Hilbert spaces $H_{\lambda}$ corresponding to non-equivalent, unitary, irreducible representations $\pi_{\lambda}$'s, and $\pi^{\ell}$ as a direct integrals of  $\pi_{\lambda}$'s, that is, 
\begin{align}\label{eqn.decomposition}
   & H^{\ell} = \int_{\widehat{G}}^{\oplus} H_{\lambda} d\sigma^{\ell} (\lambda) & \pi^{\ell} = \int_{\widehat{G}}^{\oplus} \pi_{\lambda} d\sigma^{\ell} (\lambda),
\end{align}
where $\sigma^{\ell}$ is a measure on $\widehat{G}$, see \cite[Section 10] {Mackey57}.  For a fixed orthonormal basis $\{e_{i} (\lambda) \}_{i=1}^{\infty}$  for  $H_{\lambda}$, set
\begin{align*}
    & \tilde{e}^{\ell}_{i} : = \int_{\widehat{G}}^{\oplus} e_{i} (\lambda) d\sigma^{\ell} (\lambda),
\end{align*}
so that $\{\tilde{e}^{\ell}_{i}  \}_{i=1}^{\infty}$ is an orthonormal basis for  $H^{\ell}$. Lastly, let us recall the notion of positive operator-valued measures (POVMs). A POVM  $F$ on a measurable space $(B, \mathcal{B})$ is a function 
\begin{align*}
    F: \mathcal{B} \rightarrow \mathcal{L} (H),
\end{align*}
where $\mathcal{L} (H)$ denotes the space of linear operators on some Hilbert space $H$, such that $F(A)$ is a positive operator for all $\in \mathcal{B}$ and the map 
\begin{align*}
    &\mathcal{B}  \longrightarrow \R,
    \\
    & A\rightarrow \langle F(A) v,w\rangle,
\end{align*}
is a non-negative countably additive measure for all $v,w\in H$.

\subsection{Positive-definite functions on locally compact groups of type I}

A first characterization of positive-definite functions on locally compact groups of type I was proven in \cite{Naimark59, GelfandRaikov43}, in the following form. 

\begin{thm}\label{thm.Bochner1}
    Let $G$ be a separable, locally compact group of type I. A function $B: G \longrightarrow \mathbb{C}$ is positive-definite if and only if it can be represented as 
    \begin{equation}\label{eqn.pos.def.fun1}
        B(g)=\langle \pi (g) \xi^{0}, \xi^{0} \rangle_{H},
    \end{equation}
    where $\pi$ is a unitary representation of $G$ acting on the Hilbert space $H$ and $\xi^{0} \in H$ is a fixed vector.
\end{thm}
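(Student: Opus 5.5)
The plan is to prove the two directions separately, with the Gelfand--Naimark--Segal (GNS) construction doing the real work in the hard direction.

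\emph{The easy direction.} Suppose $B(g)=\langle \pi(g)\xi^{0},\xi^{0}\rangle_{H}$. Fix $g_{1},\ldots,g_{n}\in G$ and $a_{1},\ldots,a_{n}\in\C$. By unitarity and the homomorphism property, $\pi(g_{j}^{-1}g_{i})=\pi(g_{j})^{\ast}\pi(g_{i})$. Hence
\begin{align*}
\sum_{i,j=1}^{n} a_{i}\overline{a_{j}}\, B(g_{j}^{-1}g_{i}) = \Big\Vert \sum_{i=1}^{n} a_{i}\,\pi(g_{i})\xi^{0}\Big\Vert_{H}^{2}\geq 0 ,
\end{align*}
so $B$ is positive-definite. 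Continuity of $B$ follows from strong continuity of $\pi$; note that type I plays no role in this direction.

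\emph{The converse, via GNS.} Let $B$ be positive-definite; as is implicit throughout the paper, I take $B$ continuous. Let $\C[G]$ denote the space of finitely supported functions $a:G\to\C$, and define on it the sesquilinear form
\begin{align*}
\langle a,b\rangle_{B}:=\sum_{g,h\in G} a(g)\overline{b(h)}\,B(h^{-1}g).
\end{align*}
This form is Hermitian and non-negative. Hermitian symmetry uses $B(g^{-1})=\overline{B(g)}$, which follows from positive-definiteness applied to the two points $\{e,g\}$. Quotient by the null space $N=\{a:\langle a,a\rangle_{B}=0\}$, which is a subspace by Cauchy--Schwarz, and complete to obtain a Hilbert space $H$. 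Define $\pi(g)$ on $\C[G]$ as left translation, $(\pi(g)a)(h)=a(g^{-1}h)$. Then $\pi(g)$ preserves $\langle\cdot,\cdot\rangle_{B}$, because $(gh')^{-1}(gh)=h'^{-1}h$. Consequently $\pi(g)$ maps $N$ into $N$ and extends to a unitary operator on $H$, and $g\mapsto\pi(g)$ is a homomorphism. Set $\xi^{0}:=[\delta_{e}]$. Then $\pi(g)\xi^{0}=[\delta_{g}]$, and
\begin{align*}
\langle\pi(g)\xi^{0},\xi^{0}\rangle_{H}=\langle\delta_{g},\delta_{e}\rangle_{B}=B(g),
\end{align*}
which is the required representation.

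\emph{Main obstacle: strong continuity of $\pi$.} One first computes
\begin{align*}
\Vert\pi(g)[\delta_{h}]-[\delta_{h}]\Vert^{2}=2B(e)-2\operatorname{Re}B(h^{-1}gh),
\end{align*}
which tends to $0$ as $g\to e$ by continuity of $B$. Together with the uniform bound $\Vert\pi(g)\Vert=1$, an $\varepsilon/3$ argument over the dense span of the $[\delta_{h}]$ then gives strong continuity on all of $H$.

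\emph{Remarks on hypotheses.} Separability of $H$ is not needed for this statement; one could restrict to the closed cyclic subspace generated by $\xi^{0}$ if separability is wanted later for the direct-integral decomposition \eqref{eqn.decomposition}. The type I hypothesis is only used afterwards, when $\pi$ is decomposed into irreducibles to deduce Theorem~\ref{thm.Bochner}.
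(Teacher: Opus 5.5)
Your proof is correct. The paper does not prove this statement at all. It quotes it from Gelfand--Raikov and Naimark and uses it only as the starting point of Theorem~\ref{thm.Bochner.appendix}, which is where the direct-integral decomposition (and hence the type I hypothesis) enters. Your argument is the classical GNS construction behind that citation, and the steps check out:
\begin{itemize}
\item Hermitian symmetry follows from the two-point case.
\item The form $\langle\cdot,\cdot\rangle_B$ is invariant under left translation.
\item $\langle\pi(g)[\delta_e],[\delta_e]\rangle=B(g)$.
\item Strong continuity follows from $\Vert[\delta_{gh}]-[\delta_h]\Vert^2=2B(e)-2\operatorname{Re}B(h^{-1}gh)$, together with density and $\Vert\pi(g)\Vert=1$. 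Continuity at a general $g_0$ then follows from $\pi(g)v-\pi(g_0)v=\pi(g_0)(\pi(g_0^{-1}g)v-v)$.
\end{itemize}

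What your self-contained route buys is that it exposes the true hypotheses. Type I and separability are irrelevant here, as you say. More importantly, the continuity of $B$ you add is not a tacit convention but a necessary assumption. The paper's unitary representations are strongly continuous, so every function $g\mapsto\langle\pi(g)\xi^0,\xi^0\rangle$ is continuous. By contrast, the indicator function of $\{0\}$ on $\R$ is positive-definite, since its quadratic form equals $\sum_x\vert\sum_{i:g_i=x}a_i\vert^2$, but it is discontinuous. So the statement as literally written is false, and you should state continuity explicitly as a hypothesis.

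One small correction to your closing remark. The GNS space is already the closed cyclic subspace generated by $\xi^0$, because the vectors $\pi(g)\xi^0=[\delta_g]$ span a dense set. It is also automatically separable when $G$ is: $g\mapsto[\delta_g]$ is continuous, so a countable dense subset of $G$ yields a countable total set.
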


By means of direct integral of unitary irreducible representations, and POVMs, we have the following version of Theorem \ref{thm.Bochner1}.

\begin{thm}\label{thm.Bochner.appendix}
    Let $G$ be a separable, locally compact group of type I. A function $B: G \longrightarrow \mathbb{C}$ is positive definite if and only if it can be represented as 
    \begin{equation}\label{eqn.pos.def.fun.appendix}
        B(g) = \operatorname{Tr} \left( \pi (g)  F(\widehat{G} ) \right),
    \end{equation}
    where $\widehat{G}$ denotes the unitary dual of $G$,  $\pi$ is a unitary representation of $G$ acting on the Hilbert space $H_{\pi}$, and $F$ is a positive operator-valued measure over $\widehat{G}$, whose values are  Hermitian non-negative definite operators on the representation space $H_{\pi}$, such that 
    \begin{align}\label{e.condition.pos.def.appendix}
       \operatorname{Tr} \left( F(\widehat{G}) \right) = \Vert \xi_0 \Vert_{H_{\pi}}^{2} < \infty,
    \end{align}
    where $\xi_{0} \in H_{\pi}$ is given by \eqref{eqn.pos.def.fun1}.
\end{thm}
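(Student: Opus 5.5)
The plan is to derive Theorem \ref{thm.Bochner.appendix} from the Gelfand--Raikov--Naimark characterization, Theorem \ref{thm.Bochner1}, using the direct integral decomposition \eqref{eqn.decomposition}. The "if" direction comes first and is short. Suppose $B(g)=\operatorname{Tr}(\pi(g)F(\widehat G))$ with $F(\widehat G)$ a non-negative Hermitian trace-class operator. By the spectral theorem, write $F(\widehat G)=\sum_i s_i\, \langle\cdot,u_i\rangle u_i$ with $s_i\ge 0$, $\sum_i s_i<\infty$ and $\{u_i\}$ orthonormal. Then $B(g)=\sum_i s_i\langle \pi(g)u_i,u_i\rangle$. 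For any $g_1,\dots,g_n\in G$ and $a_1,\dots,a_n\in\C$ we get
\[
\sum_{p,q}a_p\bar a_q B(g_q^{-1}g_p)=\sum_i s_i\Big\Vert\sum_p a_p\pi(g_p)u_i\Big\Vert^2\ge 0,
\]
where the rearrangement is justified by absolute convergence, since the trace condition \eqref{e.condition.pos.def.appendix} holds.

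For the "only if" direction, start from Theorem \ref{thm.Bochner1}, which gives $B(g)=\langle\pi(g)\xi_0,\xi_0\rangle_H$. Because $G$ is type I, decompose $\pi=\bigoplus_\ell\pi^\ell$ with $\pi^\ell=\int^\oplus_{\widehat G}\pi_\lambda\,d\sigma^\ell(\lambda)$ and $H=\bigoplus_\ell\int^\oplus H_\lambda\,d\sigma^\ell$. Write $\xi_0=\sum_\ell\xi_0^\ell$ and regard each $\xi^\ell_0$ as a field $\lambda\mapsto\xi^\ell_0(\lambda)\in H_\lambda$. Expanding the inner product with the direct-integral inner product gives
\[
B(g)=\sum_\ell\int_{\widehat G}\langle\pi_\lambda(g)\xi^\ell_0(\lambda),\xi^\ell_0(\lambda)\rangle_{H_\lambda}\,d\sigma^\ell(\lambda).
\]

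The next step is to define the POVM $F$ on $H=H_\pi$. For a measurable $A\subseteq\widehat G$, let $P_A=\bigoplus_\ell\int_A^\oplus I\,d\sigma^\ell$ be the projection onto the part of $H$ lying over $A$; these projections commute with $\pi(g)$ and satisfy the intersection rule \eqref{eq.intersection}. Set $F(A):=P_A\,(\langle\cdot,\xi_0\rangle\xi_0)\,P_A$, that is, the rank-one operator $\langle\cdot,P_A\xi_0\rangle P_A\xi_0$. This operator is Hermitian and non-negative. The map $A\mapsto\langle F(A)v,w\rangle=\langle v,P_A\xi_0\rangle\langle P_A\xi_0,w\rangle$ is countably additive, because $P_A\xi_0$ is countably additive in $A$ in norm by dominated convergence on the direct integral and the pieces are orthogonal for disjoint sets. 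Strictly, $\langle F(A)v,v\rangle$ alone is a scalar measure, and I would state the POVM property in that sense. Finally, $F(\widehat G)=\langle\cdot,\xi_0\rangle\xi_0$, so $\operatorname{Tr}F(\widehat G)=\Vert\xi_0\Vert^2$, which is \eqref{e.condition.pos.def.appendix}. In addition,
\[
\operatorname{Tr}(\pi(g)F(\widehat G))=\sum_i\langle\pi(g)\xi_0,\tilde e_i\rangle\langle\tilde e_i,\xi_0\rangle=\langle\pi(g)\xi_0,\xi_0\rangle=B(g),
\]
computed in the orthonormal basis $\tilde e^\ell_i$.

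The main obstacle is the measure-theoretic bookkeeping, namely the countable additivity of $A\mapsto P_A\xi_0$ and the measurability of the fields. Both reduce to $\Vert P_A\xi_0\Vert^2=\sum_\ell\int_A\Vert\xi^\ell_0(\lambda)\Vert^2d\sigma^\ell$, which is a finite measure in $A$. I would also record the component form $F_{jk}(A)=\langle F(A)\tilde e_k,\tilde e_j\rangle$, since it is used later as \eqref{eqn.cov.components} in the Heisenberg case.
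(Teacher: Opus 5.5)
Your ``if'' direction is fine. The ``only if'' construction has a genuine gap: $F(A):=\langle\cdot,P_A\xi^{0}\rangle P_A\xi^{0}$ is not a positive operator-valued measure, and it is not even finitely additive. The vector measure $A\mapsto P_A\xi^{0}$ is countably additive, with orthogonal values on disjoint sets. But $\langle F(A)v,w\rangle=\langle v,P_A\xi^{0}\rangle\langle P_A\xi^{0},w\rangle$ is quadratic in it. For disjoint $A,B$,
\[
F(A\cup B)-F(A)-F(B)=\langle\cdot,P_B\xi^{0}\rangle P_A\xi^{0}+\langle\cdot,P_A\xi^{0}\rangle P_B\xi^{0},
\]
and orthogonality of $P_A\xi^{0}$ and $P_B\xi^{0}$ does not make these cross terms vanish. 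Concretely, $\langle F(A)\xi^{0},\xi^{0}\rangle=\Vert P_A\xi^{0}\Vert^{4}$. With $a=\Vert P_A\xi^{0}\Vert^{2}$ and $b=\Vert P_B\xi^{0}\Vert^{2}$, additivity would require $(a+b)^{2}=a^{2}+b^{2}$. This fails as soon as $\xi^{0}$ has mass over two disjoint sets, e.g.\ over $\{\frac{\alpha}{2k+1}\}$ and $\{-\frac{\alpha}{2k+1}\}$ in the Heisenberg application. So your fallback claim, that $\langle F(A)v,v\rangle$ is a scalar measure, is also false.

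What is additive is $\operatorname{Tr}F(A)=\Vert P_A\xi^{0}\Vert^{2}$, and also $\operatorname{Tr}(\pi(g)F(A))$, because $P_A$ commutes with $\pi(g)$ and $P_BP_A=0$. This is presumably what suggested the claim. For the same reason, on one multiplicity-free summand your components are $\langle F(A)\tilde e_k,\tilde e_j\rangle=\overline{\int_A\xi^{0}_k\,d\sigma}\,\int_A\xi^{0}_j\,d\sigma$. That is a product of integrals, not a measure, so \eqref{eqn.cov.components} cannot be obtained from your $F$.

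The missing idea, which is the paper's construction \eqref{eqn.povm.special}, is to make $F(A)$ rank one \emph{fiberwise} instead of globally:
\[
F(A)=\bigoplus_\ell\int_A^{\oplus}\langle\cdot,\xi^{0}(\lambda)\rangle_{H_\lambda}\,\xi^{0}(\lambda)\,d\sigma^{\ell}(\lambda).
\]
This is additive in $A$, being the restriction to $A$ of a single measurable field of operators. Its entries are the genuine measures $F^{\ell}_{ij}(d\lambda)=\overline{\xi^{0}_i(\lambda)}\,\xi^{0}_j(\lambda)\,d\sigma^{\ell}(\lambda)$ that feed into \eqref{Bg}.

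The price is that once $\sigma^{\ell}$ has a diffuse part charged by $\xi^{0}$, this decomposable operator is not trace class on $H_\pi$. So $\operatorname{Tr}$ has to be read as the integrated fiberwise trace $\sum_\ell\int\operatorname{Tr}_{H_\lambda}(\cdot)\,d\sigma^{\ell}$, which is what the paper's computation with the fields $\tilde e^{\ell}_j$ amounts to. This gives $\operatorname{Tr}(\pi(g)F(\widehat G))=\sum_\ell\int\langle\pi_\lambda(g)\xi^{0}(\lambda),\xi^{0}(\lambda)\rangle\,d\sigma^{\ell}(\lambda)=B(g)$ and $\operatorname{Tr}F(\widehat G)=\Vert\xi^{0}\Vert^{2}$.

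If you instead keep the honest trace on $H_\pi$ and $F(\widehat G)=\langle\cdot,\xi^{0}\rangle\xi^{0}$, then $0\le F(A)\le F(\widehat G)$ forces $F(A)=t(A)\langle\cdot,\xi^{0}\rangle\xi^{0}$ for a probability measure $t$ on $\widehat G$. That satisfies the literal statement, but it carries no information about $\widehat G$ and reduces the theorem to Theorem \ref{thm.Bochner1}.

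(Incidentally, the constant-coefficient fields $\tilde e^{\ell}_i$ are not an orthonormal basis of $H^{\ell}$ unless $\sigma^{\ell}$ is a unit point mass. For an honest trace, any orthonormal basis of $H_\pi$ does the job.)
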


The positive operator-valued measure $F$ in Theorem \ref{thm.Bochner.appendix} is explicitly given in terms of the  vector $\xi_{0} \in H_{\pi}$ and the multiplicity free representations in the decomposition \eqref{eqn.decomposition} of $\pi$, as shown in the following proof.

\begin{proof}
    By Theorem \ref{thm.Bochner1} we know that $B: G \longrightarrow \mathbb{C}$ is positive definite if and only if 
    \begin{align*}
        &  B(g) = \langle \pi (g) \xi^{0}, \xi^{0} \rangle_{H_{\pi}},
    \end{align*}
    for some unitary representation $\pi$ acting on $H_{\pi}$ and some vector $\xi^{0} \in H_{\pi}$. It is then enough to prove that 
    \begin{align*}
        & \langle \pi (g) \xi^{0}, \xi^{0} \rangle_{H_{\pi}} = \operatorname{Tr} \left( \pi (g)  F(\widehat{G} ) \right),
    \end{align*}
    for some  POVM $F$ over $\widehat{G}$. By \eqref{eqn.decomposition}, let us decompose $\xi^{0}\in H_{\pi}$ as 
\begin{align*}
    & \xi^{0} =  \sum_{\ell} \xi_{\ell}^{0} , & \xi^{0}_{\ell} \in H^{\ell},
    \\
    & \xi^{0}_{\ell}=  \int_{\widehat{G}}^{\oplus} \xi^{0} (\lambda) d\sigma^{\ell} (\lambda), &  \xi^{0} (\lambda) \in H_{\lambda},
    \\
    & \xi^{0} (\lambda) = \sum_{i} \xi^{0}_{i} (\lambda) e_{i} (\lambda).
\end{align*}
For a fixed $\ell$, that is, for a fixed multiplicity free representation $\pi^{\ell}$, let us introduce a POVM $F^{\ell}$ on $\widehat{G}$ in the following way. For each $A\in \mathcal{B}(\widehat{G})$, the $\sigma$-algebra generated by Borel subsets of $\widehat{G}$, we let $F^{\ell} (A)$ be the operator on the Hilbert space 
\begin{align*}
    &  \int_{A}^{\oplus} H_{\lambda}(g) d\sigma^{\ell} (\lambda),
\end{align*}
defined by
\begin{align}\label{eqn.povm.special}
    & F^{\ell} (A) \tilde{e}^{\ell,A}_{i} :=  \int_{A}^{\oplus} \overline{\xi^{0}_{i} (\lambda)} \xi^{0} (\lambda)  d\sigma^{\ell} (\lambda),
\end{align}
where $\xi^{0}$ is as above, and
\begin{align*}
    & \tilde{e}^{\ell,A}_{i} : = \int_{A}^{\oplus} e_{i} (\lambda) d\sigma^{\ell} (\lambda),
\end{align*}
so that $\{\tilde{e}^{\ell,A}_{i}  \}_{i=1}^{\infty}$ is an orthonormal basis for  $\int_{A}^{\oplus} H_{\lambda}(g) d\sigma^{\ell} (\lambda)$. Let us denote by $F^{\ell}_{ij}$ the measure given by 
\begin{align*}
    & A\rightarrow F^{\ell}_{ij} (A):= \langle F^{\ell}(A)  \tilde{e}^{\ell,A}_{i} ,  \tilde{e}^{\ell,A}_{j}\rangle = \int_{A} \overline{\xi^{0}_{i} (\lambda)} \langle  \xi^{0} (\lambda) , e_{j} (\lambda) \rangle  d\sigma^{\ell} (\lambda) = \int_{A} \overline{\xi^{0}_{i} (\lambda)}\xi^{0}_{j} (\lambda)  d\sigma^{\ell} (\lambda) ,
\end{align*}
and hence define $F^{\ell}_{ij} (d\lambda) :=\overline{\xi^{0}_{i} (\lambda)}\xi^{0}_{j} (\lambda)  d\sigma^{\ell} (\lambda)$. Note that the family of measures $F^{\ell}_{ij}$ satisfies 
\begin{align}\label{eqn.hermitian.ish}
    F^{\ell}_{ij} = \overline{F^{\ell}_{ji}}.
\end{align}
We can then define an operator-valued measure over $\widehat{G}$ acting on $H_{\pi}$ by 
\begin{align*}
    & F(A) v:= \sum_{\ell} F^{\ell} (A) v_{\ell} , & \text{ for any } v= \sum_{\ell} v_{\ell} \in H_{\pi}.
\end{align*}
Thus,
\begin{align*}
    & \operatorname{Tr} \left( \pi (g)  F(\widehat{G} ) \right) = \sum_{\ell}  \operatorname{Tr} \left( \pi^{\ell} (g) F^{\ell} (\widehat{G}) \right)=  \sum_{\ell} \operatorname{Tr} \left( \int_{\widehat{G}}^{\oplus} \pi_{\lambda} (g)d\sigma^{\ell} (\lambda) \cdot F^{\ell}(\widehat{G} ) \right)
    \\
    & = \sum_{\ell, j} \langle  \int_{\widehat{G}}^{\oplus} \pi_{\lambda} (g)d\sigma^{\ell} (\lambda) \cdot F^{\ell}(\widehat{G} ) \tilde{e}^{\ell}_{j} , \tilde{e}^{\ell}_{j} \rangle_{H^{\ell}} =   \sum_{\ell, j} \langle  F^{\ell}(\widehat{G} ) \tilde{e}^{\ell}_{j} ,   \int_{\widehat{G}}^{\oplus} \pi_{\lambda} (g^{-1})d\sigma^{\ell} (\lambda) \tilde{e}^{\ell}_{j} \rangle_{H^{\ell}}
    \\
    & =    \sum_{\ell, j} \langle \int_{\widehat{G}}^{\oplus} \overline{\xi^{0}_{j} (\lambda)} \xi^{0} (\lambda)  d\sigma^{\ell} (\lambda)  ,   \int_{\widehat{G}}^{\oplus} \pi_{\lambda} (g^{-1}) e_{j}(\lambda)d\sigma^{\ell} (\lambda)  \rangle_{H^{\ell}}
    \\
    &  =    \sum_{\ell, j} \int_{\widehat{G}}  \overline{\xi^{0}_{j} (\lambda)} \langle \xi^{0} (\lambda) , \pi_{\lambda} (g^{-1}) e_{j}(\lambda) \rangle_{H_{\lambda}}d\sigma^{\ell} (\lambda)
    \\
    & =      \sum_{\ell, j,i} \int_{\widehat{G}}  \xi^{0}_{i} (\lambda) \overline{\xi^{0}_{j} (\lambda)} \langle   \pi_{\lambda} (g)  e_{i}(\lambda)  ,e_{j}(\lambda) \rangle_{H_{\lambda}} d\sigma^{\ell} (\lambda)
    \\
    &= \sum_{\ell,i,j}  \int_{\widehat{G}}\pi^{ij}_{\lambda}(g) \xi_{i}^{0} (\lambda) \overline{\xi_{j}^{0} (\lambda)}    d\sigma^{\ell} (\lambda) = \sum_{\ell,i,j}  \int_{\widehat{G}}\pi^{ij}_{\lambda}(g) F^{\ell}_{ji} (d\lambda),
\end{align*}
where $\pi^{ij}_{\lambda}(g) :=  \langle  \pi_{\lambda}(g)e_{i} (\lambda) , e_{j} (\lambda)  \rangle_{H_{\lambda}}$ denotes the matrix element. On the other hand, 
\begin{align*}
    & B(g)= \langle \pi (g) \xi^{0} , \xi^{0} \rangle_{H_{\pi}}  =\sum_{\ell} \langle \pi^{\ell} (g) \xi^{0}_{\ell} , \xi^{0}_{\ell} \rangle_{H^{\ell}} 
    \\
    & =  \sum_{\ell} \langle  \int_{\widehat{G}}^{\oplus} \pi_{\lambda}(g)\xi^{0} (\lambda) d\sigma^{\ell} (\lambda), \int_{\widehat{G}}^{\oplus} \xi^{0} (\lambda) d\sigma^{\ell} (\lambda) \rangle_{H^{\ell}}
    \\
    & =\sum_{\ell} \int_{\widehat{G}} \langle  \pi_{\lambda}(g)\xi^{0} (\lambda) , \xi^{0} (\lambda)  \rangle_{H_{\lambda}}d\sigma^{\ell} (\lambda)
    \\
    &= \sum_{\ell,i,j}  \int_{\widehat{G}}\pi^{ij}_{\lambda}(g) \xi_{i}^{0} (\lambda) \overline{\xi_{j}^{0} (\lambda)}    d\sigma^{\ell} (\lambda) = \sum_{\ell,i,j}  \int_{\widehat{G}}\pi^{ij}_{\lambda}(g) F^{\ell}_{ji} (d\lambda),
\end{align*}
 In particular, we have that 
\begin{align*}
    B(g) = \sum_{\ell,i,j}  \int_{\widehat{G}}\pi^{ij}_{\lambda}(g) \xi_{i}^{0} (\lambda) \overline{\xi_{j}^{0} (\lambda)}    d\sigma^{\ell} (\lambda),
\end{align*}
as in  \cite[p. 599]{Yaglom60}. Moreover, note that the operator $F(A)$ is Hermitian in light of \eqref{eqn.hermitian.ish}, and that
\begin{align*}
    &  \operatorname{Tr} \left( F(\widehat{G}) \right)  = \sum_{\ell} \operatorname{Tr} \left( F^{\ell}(\widehat{G}) \right) = \sum_{\ell, i} \langle F^{\ell} (\widehat{G}) \tilde{e}^{\ell}_{i},\tilde{e}^{\ell}_{i} \rangle_{H^{\ell}} 
    \\
    & = \sum_{\ell, i} \langle  \int_{\widehat{G}}^{\oplus} \overline{\xi^{0}_{i}(\lambda)} \xi^{0} (\lambda) d\sigma^{\ell} (\lambda)   , \int_{\widehat{G}}^{\oplus} e_{i}(\lambda) d\sigma^{\ell} (\lambda) \rangle_{H^{\ell}} 
    \\
    &= \sum_{\ell , i} \int_{\widehat{G}}  \langle  \overline{\xi^{0}_{i}(\lambda)} \xi^{0} (\lambda), e_{i}(\lambda) \rangle_{H_{\lambda}} d\sigma^{\ell} (\lambda)
    \\
    & = \sum_{\ell,i} \int_{\widehat{G}} \vert \xi_{i}^{0} (\lambda)\vert^{2} d\sigma^{\ell} (\lambda)= \sum_{\ell} \int_{\widehat{G}} \Vert \xi^{0} (\lambda)\Vert_{H_{\lambda}}^{2} d\sigma^{\ell} (\lambda)
    \\
    & = \sum_{\ell} \Vert \xi^{0}_{\ell} \Vert^{2}_{H^{\ell}} = \Vert \xi^{0} \Vert_{H_{\pi}}^{2} <\infty,
\end{align*}
which completes the proof.
\end{proof}

Finally, we can write  \eqref{eqn.pos.def.fun}, \eqref{e.condition.pos.def}, \eqref{e.stationary.r.f.group}, and \eqref{eqn.cov.random.op.measure} explicitly in terms of a basis for $H_{\pi}$ in the following way. If $\{e_{i} (\lambda) \}_{i=1}^{\infty}$ is an orthonormal basis for  $H_{\lambda}$, then from the proof of Theorem \ref{thm.Bochner.appendix} we have that 
\begin{align}\label{eqn.cov.components}
    &B(g)= \sum_{\ell, i,j}  \int_{\widehat{G}} \pi^{ij}_{\lambda} (g) F^{\ell}_{ji} (d\lambda) =  \sum_{i,j}  \int_{\widehat{G}} \pi^{ij}_{\lambda} (g) F_{ji} (d\lambda),
\end{align}
where $\pi^{ij}_{\lambda}(g) :=  \langle  \pi_{\lambda}(g)e_{i} (\lambda) , e_{j} (\lambda)  \rangle_{H_{\lambda}}$ denotes the matrix element and 
\begin{align*}
    F_{ij} (A):= \sum_{\ell} F^{\ell}_{ij} (A),
\end{align*}
where $\{ F_{ij}\}_{i,j}$ is a sequence of measures such that $F_{ij}= \overline{F}_{ji}$ and 
\begin{align*}
    \sum_{i} F_{ii} (\overline{ G})= \sum_{\ell,i} F^{\ell}_{ii} (\overline{ G})<\infty.
\end{align*}
Lastly, the corresponding field  \eqref{e.stationary.r.f.group} can be written as 
\begin{align*}
    & \xi(g) = \int_{\widehat{G}} \sum_{i,j=0}^{\infty} \pi_{\lambda}^{ij}(g) dZ_{ji} (\lambda),
\end{align*}
where $\{Z_{ij}\}_{i,j}$ is a family of random measures on $\widehat{G}$ such that 
\begin{align*}
    & \E \left[  Z_{ij}(A) \overline{Z_{kl}(B)} \right]= \delta_{jl} F_{ik}(A \cap B) ,
\end{align*}
see \cite[eqs. (2.26'), (2.33'), and (2.35')]{Yaglom60}.

\begin{acknowledgement}
The author A.P.T. is a member of INdAM-GNAMPA and acknowledges the hospitality of the University of Melbourne, where part of the work was accomplished during her visit.
\end{acknowledgement}

\bibliography{references}
\bibliographystyle{amsplain}

\end{document}